\setlist[enumerate]{label={\upshape(\alph*)}}
\newcommand{\kap}{\overline{\kappa}}
\newcommand{\lamb}{\overline{\lambda}}
\tikzstyle{vertex}=[circle, draw, inner sep=0pt, minimum size=4pt,fill=black]
\newcommand{\vertex}{\node[vertex]}
\tikzstyle{hollowvertex}=[circle, draw, inner sep=0pt, minimum size=4pt]
\newcommand{\hollowvertex}{\node[hollowvertex]}
\tikzstyle{namedvertex}=[circle, draw, inner sep=1pt, minimum size=12pt]
\newcommand{\namedvertex}{\node[namedvertex]}
\tikzstyle{phantomvertex}=[circle, draw, inner sep=0pt, minimum size=4pt,color=white]
\newcommand{\phantomvertex}{\node[phantomvertex]}
\newtheorem{theorem}{Theorem}[section]
\newtheorem{lemma}[theorem]{Lemma}
\newtheorem{corollary}[theorem]{Corollary}
\newtheorem{observation}[theorem]{Observation}
\theoremstyle{definition}
\newtheorem{definition}[theorem]{Definition}
\newtheorem{conjecture}[theorem]{Conjecture}
\theoremstyle{remark}
\begin{document}

\title{Average connectivity of minimally 2-connected graphs and average edge-connectivity of minimally 2-edge-connected graphs}
\author{
Roc\'{i}o M. Casablanca\\
Universidad de Sevilla\\
Av. Reina Mercedes 2, 41012 Sevilla, Spain\\
\small \href{mailto:rociomc@us.es}{rociomc@us.es}\\
Lucas Mol and Ortrud R. Oellermann\thanks{Supported by an NSERC Grant CANADA, Grant number RGPIN-2016-05237}\\
University of Winnipeg\\
515 Portage Ave.\ Winnipeg, MB, Canada R3B 2E9\\
\small \href{mailto:l.mol@uwinnipeg.ca}{l.mol@uwinnipeg.ca},
 \href{mailto:o.oellermann@uwinnipeg.ca}{o.oellermann@uwinnipeg.ca}\\}

\date{}
\maketitle

\begin{abstract}
Let $G$ be a (multi)graph of order $n$ and let $u,v$ be vertices of $G$. The maximum number of  internally disjoint $u$--$v$ paths in $G$ is denoted by $\kappa_G(u,v)$, and the maximum number of edge-disjoint $u$--$v$ paths in $G$ is denoted by $\lambda_G (u,v)$. The \emph{average connectivity} of $G$ is defined by
\[
\overline{\kappa}(G)=\displaystyle\sum_{\{u,v\}\subseteq V(G)} \kappa_G(u,v)/\tbinom{n}{2},
\]
and the \emph{average edge-connectivity} of $G$ is defined by
\[
\overline{\lambda}(G)=\displaystyle\sum_{\{u,v\}\subseteq V(G)} \lambda_G(u,v)/\tbinom{n}{2}.
\]
A graph $G$ is called \emph{ideally connected} if $\kappa_G(u,v)=\min\{\deg(u),\deg(v)\}$ for all pairs of vertices $\{u,v\}$ of $G$.

We prove that every minimally $2$-connected graph of order $n$ with largest average connectivity is bipartite, with the set of vertices of degree $2$ and the set of vertices of degree at least $3$ being the partite sets.  We use this structure to prove that $\kap(G)<\tfrac{9}{4}$ for any minimally $2$-connected graph $G$.  This bound is asymptotically tight, and we prove that every extremal graph of order $n$ is obtained from some ideally connected nearly regular graph on roughly $n/4$ vertices and $3n/4$ edges by subdividing every edge.    We also prove that $ \lamb(G)<\tfrac{9}{4}$
for any minimally $2$-edge-connected graph $G$, and provide a similar characterization of the extremal graphs.\\
{\bf AMS Subject Classification:} 05C40, 05C75, 05C35\\
{\bf Key Words:} minimally $2$-connected, maximum average connectivity, minimally $2$-edge-connected, maximum average edge-connectivity
\end{abstract}

\section{Introduction}\label{Intro}

Throughout, we allow graphs to have multiple edges.  A graph with no multiple edges is called a \emph{simple graph}.  A $u$--$v$ path in a graph $G$ is an alternating sequence of vertices and edges
\[
v_0e_1v_1e_2v_2\dots e_kv_k
\]
in which all vertices and edges are distinct, $v_0=u$, $v_k=v$, and edge $e_i$ has endvertices $v_{i-1}$ and $v_i$ for all $i\in\{1,\dots,k\}$.  If $G$ is a simple graph, then a path can be described by listing only its vertices.  A set of $u$--$v$ paths $\mathcal{P}$ is called \emph{internally disjoint} if no two paths in $\mathcal{P}$ have an internal vertex (i.e., a vertex other than $u$ or $v$) or an edge in common (which can only occur if $u$ and $v$ are adjacent), and is called \emph{edge-disjoint} if no two paths in $\mathcal{P}$ have an edge in common. The {\em distance} between two vertices $u$ and $v$ in $G$, denoted $d_G(u,v)$ or $d(u,v)$ if $G$ is understood, is the length of a shortest $u$--$v$ path.

Let $G$ be a non-trivial graph.  The {\em connectivity} of $G$, denoted  $\kappa(G)$, is the smallest number of vertices whose removal disconnects $G$ or produces the trivial graph.   The \emph{edge-connectivity} of $G$, denoted $\lambda(G)$, is the smallest number of edges whose removal disconnects $G$.  For $k\geq 1$, a graph $G$ is \emph{$k$-connected} (or \emph{$k$-edge-connected}) if it has connectivity (edge-connectivity, respectively) at least $k$.

Following~\cite{BeinekeOellermannPippert2002} we define, for a pair $u,v$ of vertices of $G$, the \emph{connectivity} between $u$ and $v$ in $G$, denoted $\kappa_G(u,v)$, to be the maximum number of internally disjoint paths between $u$ and $v$.  By a well-known theorem of Menger~\cite{Menger1927}, when $u$ and $v$ are non-adjacent, this matches the familiar alternate definition of the connectivity between $u$ and $v$ as the minimum number of vertices whose removal separates $u$ and $v$.  The definition from~\cite{BeinekeOellermannPippert2002} used here is also well-defined if $u$ and $v$ are adjacent.  Analogously, the \emph{edge-connectivity} between $u$ and $v$ in $G$, denoted $\lambda_G(u,v)$, is the maximum number of edge-disjoint $u$--$v$ paths in $G$.  Again, by an alternate version of Menger's theorem, this matches the familiar alternate definition of the edge-connectivity between $u$ and $v$ as the minimum number of edges whose removal separates $u$ and $v$.  When $G$ is clear from context, we use $\kappa(u,v)$ and $\lambda(u,v)$ instead of $\kappa_G(u,v)$ and $\lambda_G(u,v)$, respectively.  Whitney~\cite{Whitney1932} showed that if $G$ is a graph, then $\kappa(G) = \min\{\kappa(u,v)\, |\, u,v \in V(G)\}$. Similarly $\lambda(G) =\{\lambda(u,v)\, |\,  u,v \in V(G)\}$.  Thus, the connectivity and edge-connectivity of a graph are worst-case measures.

A more refined measure of the overall level of connectedness of a graph, introduced in \cite{BeinekeOellermannPippert2002}, is based on the average values of the `local connectivities' between all pairs of vertices.  The {\em average connectivity} of a graph $G$ of order $n$, denoted  $\overline{\kappa}(G)$, is the average of the connectivities over all pairs of distinct vertices of $G$.  That is,
\[
\overline{\kappa}(G)= \sum_{\{u,v \}\subseteq V(G)}\kappa(u,v)/\tbinom{n}{2}.
\]
The \emph{total connectivity} of $G$, denoted $K(G)$, is the sum of the connectivities over all pairs of distinct vertices of $G$, i.e., $K(G)=\binom{n}{2}\kap(G)$.

Analogously, the \emph{average edge-connectivity} of $G$, denoted $\overline{\lambda}(G)$, is the average of the edge-connectivities over all pairs of distinct vertices of $G$.  That is,
\[
\overline{\lambda}(G)=\sum_{\{u,v\}\subseteq V(G)}\lambda_G(u,v)/\tbinom{n}{2}.
\]
The \emph{total edge-connectivity} of $G$, denoted $\Lambda(G)$, is the sum of the edge-connectivities over all pairs of distinct vertices of $G$, i.e., $\Lambda(G)=\binom{n}{2}\overline{\lambda}(G)$.

Let $u$ and $v$ be distinct vertices of a graph $G$.  It is well-known (see~\cite[Section 5]{OellermannChapter2013}) that
\[
\kappa(u,v)\leq \lambda(u,v)\leq \min\{\deg(u),\deg(v)\}.
\]
If $\kappa(u,v)=\min\{\deg(u),\deg(v)\}$ for all pairs of distinct vertices $u$ and $v$ in $G$, then we say that $G$ is \emph{ideally connected}.  If $\lambda(u,v)=\min\{\deg(u),\deg(v)\}$ for all pairs of distinct vertices $u$ and $v$ in $G$, then we say that $G$ is \emph{ideally edge-connected}.  Evidently, if $G$ is ideally connected, then it must also be ideally edge-connected.

Much work has been done on bounding the average connectivity in terms of various graph parameters, including order and size~\cite{BeinekeOellermannPippert2002}, average degree~\cite{DankelmannOellermann2003}, and matching number~\cite{KimO2013}.  Bounds have also been achieved on the average connectivity of graphs belonging to particular families, including planar and outerplanar graphs~\cite{DankelmannOellermann2003}, Cartesian product graphs~\cite{DankelmannOellermann2003}, strong product graphs~\cite{Abajo2013}, and regular graphs~\cite{KimO2013}.  Average connectivity has also proven to be a useful measure for real-world networks, including street networks~\cite{Boeing2017} and communication networks~\cite{Rak2015}.

In this article, we demonstrate sharp bounds on the average connectivity of minimally $2$-connected graphs and the average edge-connectivity of minimally $2$-edge-connected graphs.  For $k\geq 1$, a graph $G$ is called {\em minimally $k$-connected} if $\kappa(G)=k$ and for every edge $e$ of $G$, $\kappa(G-e) < k$.  Analogously, $G$ is called \emph{minimally $k$-edge-connected} if $\lambda(G)=k$ and for every edge $e$ of $G$, $\kappa(G-e)<k$.  A graph $G$ with $\kappa(G) = \overline{\kappa}(G) =k$ is called a \emph{uniformly $k$-connected graph}.  It was observed in \cite{BeinekeOellermannPippert2002} that uniformly $k$-connected graphs are minimally $k$-connected.  It is obvious that every minimally $1$-connected graph (i.e., tree) is uniformly $1$-connected.  However, for $k\geq 2$, minimally $k$-connected graphs need not be uniformly $k$-connected, as can be seen by considering the graphs $K_{k,n-k}$ for $n > 2k \ge 4$.  So if $k\geq 2$, it is natural to ask by how much the average connectivity of a minimally $k$-connected graph can exceed $k$.  Similarly, by how much can the average edge-connectivity of a minimally $k$-edge-connected graph exceed $k$? In this article, we answer both of these questions in the case where $k=2$.

We show that
\[
2\leq \overline{\kappa}(G)< \tfrac{9}{4}
\]
for every minimally $2$-connected graph $G$.  The lower bound is readily seen to be attained if and only if $G$ is a cycle.  We prove the upper bound in Section~\ref{Vertex}. We say that $G$ is an \emph{optimal} minimally $2$-connected graph of order $n$ if $G$ has maximum average connectivity among all such graphs.  We prove that any optimal minimally $2$-connected graph of order at least $5$ must be bipartite, with the set of vertices of degree $2$ and the set of vertices of degree at least $3$ being the partite sets.  
 More specifically it is shown that every minimally $2$-connected graph of order $n$ having maximum average connectivity are those obtained from some ideally connected nearly regular graph on roughly $n/4$ vertices and $3n/4$ edges by subdividing every edge. This result demonstrates that the above bound of $9/4$ on $\overline{\kappa}(G)$ is asymptotically tight.  It can be deduced, from this characterization, that the optimal minimally $2$-connected graphs are ideally connected but not all ideally minimally $2$-connected graphs are optimal.

We also show that
\[
2\leq \overline{\lambda}(G)< \tfrac{9}{4}
\]
for any minimally $2$-edge-connected graph $G$.  Once again, the lower bound is readily seen to be attained if and only if $G$ is a cycle.  We prove the upper bound in Section~\ref{Edge}, where we study the structure of minimally $2$-edge-connected graphs of order $n$ with maximum average edge-connectivity (which we call \emph{edge-optimal} minimally $2$-edge-connected graphs).  We obtain structural results on edge-optimal minimally $2$-edge-connected graphs similar to those obtained for optimal minimally $2$-connected graphs, though the proofs are quite different.  This culminates in the same upper bound as for the vertex version, and an analogous characterization of the edge-optimal minimally $2$-edge-connected graphs.

Before we proceed, we introduce some notation that is used throughout.  For vertices $u$ and $v$ in a graph $G$ we use $u \sim_G v$ to indicate that $u$ is adjacent with $v$ and $u \not\sim_G v$ to indicate that $u$ is not adjacent with $v$. The subscript is omitted if $G$ is clear from context.  If $P$ is a path, then $\overleftarrow{P}$ is the path obtained by reversing the order of the vertices and edges in $P$. Let $u$ and $v$ be vertices of $P$ where $u$ precedes $v$ on $P$. Then $P[u,v]$ denotes the $u$--$v$ subpath of $P$.  If $P_1$ is a path ending in $u$ and $P_2$ is a path beginning in $u$, then we let $P_1\odot P_2$ denote the concatenation of $P_1$ and $P_2$, with $u$ written only once.  Note that $P_1\odot P_2$ is a path if and only if $P_1$ and $P_2$ have no vertices in common apart from $u$.

\section{Average connectivity of minimally 2-connected graphs}\label{Vertex}

In this section, we obtain results about the structure of optimal minimally $2$-connected graphs, and use this to prove a sharp upper bound on the average connectivity of minimally $2$-connected graphs. It is easy to see that minimally $2$-connected graphs must be simple graphs. So throughout this section, we denote paths by listing only the vertices.

We begin with some background material on minimally $k$-connected graphs.  An edge $e$ of a $k$-connected graph $G$ is called $k$-{\em essential} if $\kappa(G-e) < \kappa(G)$.  Thus, a minimally $k$-connected graph is one in which every edge is $k$-essential.  Mader \cite{Mader1972} established the following structure theorem for the $k$-essential edges in a $k$-connected graph.

\begin{theorem}[\cite{Mader1972}]
If $G$ is a $k$-connected graph and if $C$ is a cycle of $G$ in which every edge is $k$-essential, then some vertex of $C$ has degree $k$ in $G$.
\end{theorem}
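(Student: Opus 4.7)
My plan is to argue by contradiction: suppose $G$ is $k$-connected, every edge of the cycle $C$ is $k$-essential, and yet every vertex of $C$ has degree at least $k+1$ in $G$. I will produce an edge of $C$ whose removal does not drop connectivity, contradicting the $k$-essentiality hypothesis.

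For each edge $e=uv$ of $C$, since $e$ is $k$-essential and removing one edge drops connectivity by at most one, $\kappa(G-e)=k-1$. Menger's theorem applied in $G-e$ yields a set $T_e\subseteq V(G)\setminus\{u,v\}$ with $|T_e|=k-1$ separating $u$ from $v$ in $G-e$. Call any pair $(T_e,F_e)$, where $F_e$ is the vertex set of a component of $G-e-T_e$ containing exactly one of $u,v$, a \emph{fragment datum} for $e$; then $T_e\cup\{u\}$ and $T_e\cup\{v\}$ are $k$-cuts of $G$, and $T_e$ must meet $V(C)\setminus\{u,v\}$, since otherwise the path $C-e$ would connect $u$ to $v$ in $G-e-T_e$.

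The main step is a minimality argument combined with a crossing submodularity inequality. Among all edges $e$ of $C$ and all fragment data for $e$, choose a triple $(e,T,F)$ with $|F|$ minimum; say $e=uv$ with $v\in F$. Let $e'=vw$ be the next edge of $C$ and pick a fragment datum $(T',F')$ for $e'$ with $w\in F'$. I will then apply the classical submodular inequality for vertex neighbourhoods,
\[
|N_G(X\cap Y)|+|N_G(X\cup Y)|\leq |N_G(X)|+|N_G(Y)|,
\]
to sets $X,Y$ built from $F$ and $F'$ (adjusted for whether $u,w$ lie in the other fragment, in its separator, or outside). In each subcase the conclusion is that either one of $X\cap Y$ or $X\cup Y$ is the fragment side of a valid fragment datum at some edge of $C$ that is strictly smaller than $F$, contradicting minimality; or else the inequality collapses to exhibit a vertex cut of $G$ of size at most $k-1$, contradicting $k$-connectivity.

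The main obstacle is the case analysis around the positions of $u,v,w$ relative to $T,T',F,F'$, keeping track of whether the edges $e$ and $e'$ lie across the relevant cuts. The degree hypothesis enters here precisely: if some vertex of $C$ had degree exactly $k$, the submodular bound could be tight in a way that blocks the contradiction. Having $\deg(x)\geq k+1$ for every $x\in V(C)$ guarantees that each vertex on $C$ has at least one neighbour outside any given $(k-1)$-separator associated with an incident cycle edge, which is what forces the strict inequality in one of the two terms on the left-hand side and closes the argument.
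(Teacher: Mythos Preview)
The paper does not prove this theorem at all; it is quoted from Mader~\cite{Mader1972} without proof and used only as a black box to derive Corollaries~\ref{mader2} and~\ref{mader}. So there is no ``paper's own proof'' to compare your proposal against.

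As for your proposal itself: the strategy you outline---take a $(k-1)$-separator for each $k$-essential edge of $C$, choose a fragment of minimum size, then cross it with a fragment at the next edge of $C$ and invoke the submodular inequality for vertex boundaries---is exactly the classical approach, essentially Mader's own argument (see also Bollob\'as, \emph{Extremal Graph Theory}, \S1.4). So the plan is sound and would succeed.

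That said, what you have written is a plan, not a proof. You explicitly flag ``the case analysis around the positions of $u,v,w$ relative to $T,T',F,F'$'' as the main obstacle and then do not carry it out; you assert that ``in each subcase'' a contradiction arises without verifying any subcase. This case analysis is the entire content of the theorem, and it is genuinely delicate: one must check, for each configuration, that the candidate smaller set really is a fragment for some edge of $C$ (i.e., that the associated separator has the right size and actually separates the right endpoints in the right graph), or else that a global $(k{-}1)$-cut of $G$ appears. Your description of how the degree hypothesis enters is also imprecise: its primary role is to rule out singleton fragments (if $F=\{v\}$ then $N_G(v)\subseteq T\cup\{u\}$ forces $\deg(v)\le k$), which is what makes the minimality argument nontrivial; the ``strict inequality in one of the two terms'' phrasing does not match how the argument actually closes. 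If you want this to stand as a proof, you need to execute the crossing argument in full.
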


The following structural results for minimally $k$-connected graphs are an immediate consequence.

\begin{corollary}[\cite{Mader1972}]\label{mader2}
If $G$ is a minimally $k$-connected graph, then $G$ has a vertex of degree $k$.
\end{corollary}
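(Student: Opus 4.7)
The plan is to deduce this directly from Mader's theorem by exhibiting a cycle in $G$ every edge of which is $k$-essential, then reading off a vertex of degree $k$ on that cycle. Since $G$ is minimally $k$-connected, \emph{every} edge of $G$ is $k$-essential by definition, so the only real content is producing a cycle.

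For $k\geq 2$, every $k$-connected graph has minimum degree at least $k\geq 2$, so $G$ contains a cycle $C$. All edges of $C$ are $k$-essential (being edges of $G$), and Mader's theorem applied to $C$ yields a vertex of $C$ of degree $k$ in $G$. For the trivial case $k=1$, a minimally $1$-connected graph is a tree (removing any edge of a connected graph that is not a bridge leaves a connected graph), and any leaf gives a vertex of degree $1$; I would just dispose of this case in a sentence if $k=1$ is to be included, or simply restrict attention to $k\geq 2$ where the interesting cases live.

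There is essentially no obstacle here: the step that could trip one up is verifying that the minimum degree bound $\delta(G)\geq\kappa(G)=k$ is legitimate for the purpose of finding a cycle (which requires $k\geq 2$ and $G$ nontrivial), and that Mader's theorem applies to \emph{every} cycle in $G$, not just some distinguished one, because the minimality hypothesis makes all edges $k$-essential globally. Given these observations, the corollary is two lines.
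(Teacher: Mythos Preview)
Your argument is correct and is exactly the ``immediate consequence'' the paper has in mind: it states Corollary~\ref{mader2} without proof, noting only that it follows directly from Mader's theorem, and your observation that minimality makes every edge $k$-essential so that any cycle (guaranteed by $\delta(G)\geq k\geq 2$) witnesses a degree-$k$ vertex is precisely the intended deduction. Your separate handling of $k=1$ is a fine addition, though the paper's focus is on $k\geq 2$.
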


\begin{corollary}[\cite{Mader1972}]\label{mader}
Let $G$ be a minimally $k$-connected graph and $F$ the subgraph of $G$ induced by the vertices of degree exceeding $k$. Then $F$ is a forest.
\end{corollary}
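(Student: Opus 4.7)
The plan is to argue by contradiction using Mader's theorem (the displayed theorem just above the statement). I would suppose that $F$ contains a cycle $C$, and then derive a contradiction by producing a cycle in $G$ whose edges are all $k$-essential but whose vertices all have degree exceeding $k$.

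The key observation is that since $F$ is a subgraph of $G$, the cycle $C$ is also a cycle in $G$. Every edge of $C$ is therefore an edge of $G$, and because $G$ is minimally $k$-connected, every such edge is by definition $k$-essential. Hence $C$ satisfies the hypothesis of Mader's theorem, so some vertex of $C$ must have degree exactly $k$ in $G$. On the other hand, every vertex of $F$ — and in particular every vertex of $C$ — has degree strictly greater than $k$ in $G$ by the definition of $F$. These two statements are incompatible, so no such cycle $C$ can exist, and $F$ is acyclic.

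I expect no real obstacles here: the entire argument is a direct application of the cited theorem, with the only care needed being to verify that a cycle in the induced subgraph $F$ is genuinely a cycle in $G$ whose edges are all edges of $G$ (so that the $k$-essential hypothesis applies). Once that is noted, the contradiction with the degree condition defining $F$ is immediate.
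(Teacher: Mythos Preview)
Your argument is correct and is exactly the immediate consequence the paper alludes to: the paper does not spell out a proof but states the corollary as following directly from Mader's theorem, and your contradiction argument is precisely that derivation.
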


Minimally $2$-connected graphs were characterized independently in \cite{Dirac1967,Plummer1968}. A cycle $C$ of a graph $G$ is said to have a {\em chord} if there is an edge of $G$ that joins a pair of non-adjacent vertices from $C$.  The following characterization of Plummer \cite{Plummer1968} is used frequently throughout this section.

\begin{theorem}[{\cite[Corollary 1a]{Plummer1968}}] \label{plummer}
A $2$-connected graph $G$ is minimally $2$-connected if and only if no cycle of $G$ has a chord.
\end{theorem}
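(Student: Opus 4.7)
The plan is to prove both directions by contrapositive, with the forward half driven by a direct check that deleting a chord preserves $2$-connectedness, and the reverse half driven by Menger's theorem.

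For the forward implication, suppose some cycle $C$ of $G$ has a chord $e = xy$. The two arcs of $C$ between $x$ and $y$ are internally disjoint $x$-$y$ paths in $G - e$, so $G - e$ is connected. To show $G - e$ has no cut vertex, suppose $v$ is one. Since $G$ is $2$-connected, $G - v$ is connected, so $v$ must separate $x$ from $y$ in $G - e$ (otherwise the components of $(G-e) - v$ and $G - v$ coincide); in particular $v \notin \{x, y\}$. If $v \notin V(C)$ then $C$ survives intact in $(G - e) - v$ and supplies an $x$-$y$ path, so $v$ must lie on $C$, and hence on exactly one of the two internally disjoint arcs of $C$. The other arc then delivers an $x$-$y$ path in $(G - e) - v$, a contradiction. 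Thus $G - e$ is $2$-connected, so $e$ is not $2$-essential and $G$ is not minimally $2$-connected.

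For the reverse implication, suppose $G$ is $2$-connected but not minimally so, and pick an edge $e = xy$ for which $G - e$ is still $2$-connected. Menger's theorem applied in $G - e$ yields two internally disjoint $x$-$y$ paths whose union is a cycle $C$ of $G$ not containing $e$. Since $G$ is simple and $e$ is the unique edge between $x$ and $y$, the vertices $x$ and $y$ cannot be consecutive on $C$; hence $e$ is a chord of $C$, contrary to the hypothesis.

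The main delicacy is in the forward direction, where one must carefully justify that a hypothetical cut vertex $v$ of $G - e$ is distinct from $x$ and $y$ and lies on $C$; once those two reductions are secured, the two internally disjoint arcs of $C$ immediately produce the contradiction. Mader's corollaries quoted above do not appear to give a shorter route, so I would favor this self-contained argument.
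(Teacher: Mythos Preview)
The paper does not give its own proof of this statement; it is quoted from Plummer and used as a black box. So there is nothing in the paper to compare your argument against.

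Your argument is correct. In the forward direction the key reduction---that a putative cut vertex $v$ of $G-e$ must lie outside $\{x,y\}$ and must separate $x$ from $y$---is exactly right: if $v\in\{x,y\}$ then $(G-e)-v=G-v$ is connected, and if $v\notin\{x,y\}$ with $x,y$ in the same component of $(G-e)-v$ then adding $e$ back changes nothing, so again $(G-e)-v$ is connected. Once that is secured, the two arcs of $C$ immediately give the contradiction. The reverse direction via Menger in $G-e$ is the standard one-line argument.

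One comment on the simplicity hypothesis you invoke in the reverse direction. The paper allows multigraphs globally, and under the paper's definition of ``chord'' the equivalence is actually false for multigraphs: a triangle with one edge doubled is $2$-connected and has no cycle with a chord (every cycle has all its vertex pairs adjacent), yet deleting one of the parallel edges leaves a $2$-connected graph. The paper sidesteps this by noting, just before stating Plummer's theorem, that minimally $2$-connected graphs are automatically simple; so simple graphs are the intended setting and your appeal to simplicity is legitimate, but it would be cleaner to state that assumption up front rather than inside the argument.
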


\subsection{Structural properties of optimal minimally 2-connected graphs}

Let $G$ be a minimally $2$-connected graph, and let $F$ be the subgraph of $G$ induced by the set of vertices of degree exceeding $2$.  By Corollary~\ref{mader}, $F$ is a forest.  We begin by proving that if $u$ and $v$ are in the same component of $F$, then $\kappa_G(u,v)=2$. The special case where two vertices are adjacent was observed in {\cite[Lemma 4.2]{Bollobas1978}}.  This explains why we might expect the set of vertices of degree exceeding $2$ to be independent in an optimal minimally $2$-connected graph.

\begin{theorem} \label{con_between_pairs_large_degree}
Let $G$ be a minimally $2$-connected  graph that is not a cycle, and let $F$ be the subgraph of $G$ induced by the set of vertices of $G$ of degree exceeding $2$.  If $u$ and $v$ are distinct vertices of $G$ that belong to the same component of $F$, then $\kappa(u,v)=2$.
\end{theorem}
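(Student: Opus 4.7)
The plan is to induct on $k := d_F(u,v)$, the distance from $u$ to $v$ in the forest $F$; this is finite since $u$ and $v$ lie in the same component of $F$.

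For the base case $k = 1$, the vertices $u$ and $v$ are adjacent in $G$. If $\kappa(u,v) \geq 3$, then of the three internally disjoint $u$--$v$ paths, at most one is the edge $uv$ itself, so at least two paths (say $P_1$ and $P_2$) have length at least $2$. Their union $P_1 \cup \overleftarrow{P_2}$ is then a cycle on which $u$ and $v$ are non-adjacent, so the edge $uv$ is a chord of this cycle, contradicting Theorem~\ref{plummer}.

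For the inductive step $k \geq 2$, let $Q = u = w_0, w_1, \ldots, w_k = v$ be a shortest $u$--$v$ path in $F$, and suppose for contradiction that $\kappa(u,v) \geq 3$. Note that $u \not\sim_G v$, since otherwise $uv$ would be an edge of $F$, giving $d_F(u,v) = 1$; so Menger's theorem applies and every $u$--$v$ separator has size at least $3$. Since $d_F(w_1, v) = k-1 < k$, the inductive hypothesis yields $\kappa(w_1, v) = 2$, so there is a size-$2$ separator $S = \{a, b\}$ of $w_1$ and $v$ with $a, b \notin \{w_1, v\}$. If $u \notin S$, then $u$ and $w_1$ lie in the same component of $G - S$ via the edge $uw_1$, so $v$ (unreachable from $w_1$) is also unreachable from $u$; thus $S$ is a $u$--$v$ separator of size $2$, a contradiction. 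Otherwise $u \in S$, say $a = u$, so $\{u, b\}$ separates $w_1$ from $v$; since the subpath $w_1, w_2, \ldots, v$ of $Q$ avoids $u$, it must visit $b$, forcing $b = w_j$ for some $j \in \{2, \ldots, k-1\}$. In particular, $b \in V(F)$ with $d_F(u, b) = j < k$.

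At this point the inductive hypothesis applies to $(u, w_j)$, giving a size-$2$ separator $S' = \{a', b'\}$ of $u$ and $w_j$. I would then repeat the case analysis: either $v \notin S'$ and the subpath $Q[w_j, v]$ of $Q$ forces a contradiction (if $S'$ does not meet its interior, then $\{a', b'\}$ is itself a $u$--$v$ separator of size $2$; if $S'$ meets $Q[w_j, v]$, we obtain another $F$-vertex further along $Q$ in the separator), or $v \in S'$, in which case a symmetric analysis on the $Q[u, w_j]$ side is required. The monovariant driving termination is the position along $Q$: each iteration pushes a separator vertex strictly further from $u$ along $Q$, so after finitely many steps we either produce a size-$2$ $u$--$v$ separator (contradicting $\kappa(u,v) \geq 3$) or exhaust $Q$. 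The main obstacle is controlling this iteration when the endpoint $v$ (or $u$) itself appears in some intermediate separator, and ensuring the case analysis always terminates in a contradiction rather than looping back on itself; pinning down this bookkeeping, while leaning on Theorem~\ref{plummer} to rule out chords in any auxiliary cycles that arise, is where I expect the most delicate work to lie.
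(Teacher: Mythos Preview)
Your base case $k=1$ is fine and matches the paper. But the inductive step has a genuine gap right at $k=2$: there $d_F(w_1,v)=1$, so $w_1$ and $v$ are adjacent in $G$, and although the inductive hypothesis gives $\kappa(w_1,v)=2$, Menger's separator form does \emph{not} apply to adjacent vertices---no vertex set disconnects $w_1$ from $v$. So the sentence ``there is a size-$2$ separator $S=\{a,b\}$ of $w_1$ and $v$'' is simply false when $k=2$, and your argument collapses at the very first step. The paper handles $k=2$ by a separate, direct chord argument using a third neighbour of the middle vertex $w_1$; you will need something of that kind here too.

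For $k\geq 3$ your approach is genuinely different from the paper's, but the part you flag as ``delicate bookkeeping'' is not just bookkeeping---your proposed monovariant does not work as stated. After the first step you have $\{u,w_j\}$ separating $w_1$ from $v$; applying the hypothesis to $(u,w_j)$ may well produce a separator $\{v,w_m\}$ with $m<j$, and the next application to $(w_m,v)$ can land you back near the start of $Q$. So the ``position along $Q$'' can oscillate rather than increase, and nothing you have written forces termination. It is not clear that a pure separator-chasing argument closes without eventually invoking the chord-free property (Theorem~\ref{plummer}) in the inductive step itself---and you never actually use it there. By contrast, the paper's proof takes the pair $(u,v)$ with $\kappa(u,v)\geq 3$ and \emph{minimal} $d_F(u,v)$, works directly with a maximum family $\mathcal{P}$ of internally disjoint $u$--$v$ paths, argues that the $F$-path must lie in $\mathcal{P}$, and then uses an extra neighbour of $u_1$ together with Theorem~\ref{plummer} to manufacture three internally disjoint paths between a closer pair on $Q$. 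That approach uses the minimally $2$-connected hypothesis (via chord-freeness) throughout, which is why it closes; yours uses it only in the base case, and that appears to be too little.
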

\begin{proof}   By Corollary \ref{mader}, $F$ is a forest. Since $G$ is minimally $2$-connected, $\kappa(u,v) \ge 2$. So it remains to be shown that $\kappa(u,v) \le 2$.  Assume, to the contrary, that $\kappa(u,v)\geq 3$.

Suppose first that $uv$ is an edge of $F$.  Then there exist at least two internally disjoint $u$--$v$ paths $P_1$ and $P_2$ in $G$, each of length at least 2. So $uv$ is a chord of the cycle produced by $P_1$ and $P_2$, contrary to Theorem \ref{plummer}.

Suppose now that $d_F(u,v)=2$. Let $w$ be the common neighbour of $u$ and $v$ in $F$.  Let $P$ be the path $uwv$, and let $P_1$ and $P_2$ be two $u$--$v$ paths internally disjoint from $P$ and one another.   Since $w$ is in $F$, $\deg_G(w) \ge 3$.  Say $w$ is adjacent with $x\neq u,v$.  If $x$ is in $P_1$, then the edge $wx$ is a chord of the cycle formed by $P$ and $P_1$, contrary to Theorem~\ref{plummer}.  Similarly, $x$ is not in $P_2$.  Since $G$ is $2$-connected, there is an $x$--$u$ path $Q$ that does not contain $w$. Let $z$ be the first vertex of $Q$ that lies on either $P_1$ or $P_2$, say $P_1$ (note that possibly $z=u$). Then the paths $P_2$, $vwx$, $\overleftarrow{Q}[x,z]$, and ${P_1}[z,u]$ make up a cycle with the chord $uw$, contrary to Theorem \ref{plummer}.

We may now assume that $d_F(u,v) \ge 3$. We choose  $u$ and $v$ in such a way that $\kappa(u,v)\geq 3$ and $d_F(u,v)$ is as small as possible. Thus, if $a$ and $b$ are two vertices in the same component of $F$ with $d_F(a,b) < d_F(u,v)$, then $\kappa(a,b) =2$. Let $d_F(u,v)=k\geq 3$ and let $P:(u=)u_0u_1 \dots u_k(=v)$ be the $u$--$v$ path in $F$. Let $\mathcal{P}$ be a collection of $\kappa(u,v)$ pairwise internally disjoint $u$--$v$ paths in $G$.

\medskip

\noindent
{\bf Claim:} $P \in \mathcal{P}$.\\
Assume, to the contrary, that $P \not\in \mathcal{P}$. Let $i$ be the smallest positive integer such that $u_i$ lies on some path, $P_1$ say, of $\mathcal{P}$.  We must have $1\le i<k$, since otherwise $P$ would be internally disjoint from all paths in $\mathcal{P}$, contradicting the maximality of $\mathcal{P}$.

Suppose first that $2 \le i <k$.  Let $P_2 \in \mathcal{P}-\{P_1\}$.  Since $u_k=v$ is on $P_2$, there is a smallest positive integer $j$  such that $u_j$ is on $P_2$. By our choice of $i$, we see that $j >i$.   Then $P[u,u_i]$,  $P_1[u,u_i]$, and $P_2[u,u_j]\odot\overleftarrow{P}[u_j, u_i]$ are three internally disjoint $u$--$u_i$ paths in $G$.  Since $d_F(u,u_i)=i<k=d_F(u,v)$, this contradicts our choice of $u$ and $v$.

So we may assume that $i=1$; that is, $u_1$ is on $P_1$.  Since we are assuming that $P \not\in \mathcal{P}$, we must have $P\neq P_1$, and hence there is a smallest $j$, $1 \le j <k-1$ such that $u_{j+1}$ is not on $P_1$ (if $P_1$ contained $u_0,u_1,\dots,u_{k-1}$ then we could swap $P_1$ for $P$ in $\mathcal{P}$). By Theorem \ref{plummer}, $u_{j+1}$ does not belong to any path of $\mathcal{P} -\{P_1\}$. Since $F$ is a forest, $u_{j+1} \not\sim
u_i$ for $0 \le i \le j-1$. Let $\ell > j+1$ be the first integer such that $u_\ell$ lies on some path in $\mathcal{P}$ (possibly $\ell=k$). If $u_\ell$ is not on $P_1$ (so in particular $u_\ell \ne v$), then $u_\ell$ is on some path, say $P_2$, of $\mathcal{P} - \{P_1\}$.  Let $P_3 \in \mathcal{P} - \{P_1,P_2\}$.  Then $P_1[u,u_j]$,  $P_2[u,u_\ell]\odot\overleftarrow{P}[u_\ell,u_j]$, and $P_3\odot\overleftarrow{P_1}[v,u_j]$ are internally disjoint $u$--$u_j$ paths in $G$. Since $d_F(u,u_j) < d_F(u,v)$, this contradicts our choice of $u$ and $v$.  Thus  $u_\ell$ is on $P_1$.  Then $P[u_j,u_\ell]$, $P_1[u_j,u_\ell]$, and $\overleftarrow{P}[u_j,u_0]\odot P_2\odot \overleftarrow{P}[u_k,u_\ell]$ are three internally disjoint $u_j$--$u_\ell$ paths in $G$.  Since $d_F(u_j,u_\ell) < d_F(u,v)$, this again contradicts our choice of $u$ and $v$.  This completes the proof of our claim. 

\medskip

So $P\in \mathcal{P}$.  Let $P_1$ and $P_2$ be paths in $\mathcal{P}$ distinct from $P$ and one another.  Since $u_1$ is in $F$, it has degree at least $3$ and hence has a neighbour $x$ not on $P$. By Theorem \ref{plummer}, $x$ does not lie on any path of $\mathcal{P}$. Since $G$ is $2$-connected, there is an $x$--$u$ path $Q$ that does not contain $u_1$.  Let $z$ be the first vertex of $Q$ that is on a path in $\mathcal{P}$.  If $z=u$, then $P[u,u_1],$ $\overleftarrow{Q}\odot xu_1$, and $P_1\odot \overleftarrow{P}[v,u_1]$ are three internally disjoint $u$--$u_1$ paths in $G$, so $\kappa_G(u,u_1)\geq 3$.  Otherwise, if $z\neq u$ but $z$ is on $P$, then $z$ is in $F$, and $P[u_1,z]$, $u_1x\odot Q[x,z]$, and $u_1u\odot P_2\odot \overleftarrow{P}[v,z]$ are three internally disjoint $u_1$--$z$ paths in $G$, so $\kappa(u_1,z) \ge 3$.  Finally, if $z$ is not on $P$, then assume without loss of generality that $z$ lies on $P_1$.  Then $P[u,u_1]$, $P_1[u,z]\odot \overleftarrow{Q}\odot xu_1$, and $P_2\odot \overleftarrow{P}[v,u_1]$ are three internally disjoint $u$--$u_1$ paths in $G$, so $\kappa(u,u_1) \ge 3$.  Either way, this contradicts our choice of $u$ and $v$, and this completes the proof.
\end{proof}

We now show that if $G$ is an optimal minimally $2$-connected graph of order at least $5$, then the set of vertices of degree $2$ is independent, and so is the set of vertices of degree exceeding $2$.  This is the key structural result used in the sequel to obtain an upper bound on the average connectivity of minimally $2$-connected graphs.

\begin{theorem} \label{bipartite}
Let  $G$ be an optimal minimally $2$-connected graph of order $n \ge 5$.  Then $G$ is bipartite with partite sets the set of vertices of degree $2$ and the set of vertices of degree exceeding $2$.
\end{theorem}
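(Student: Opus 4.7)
The plan is to prove the theorem by contradiction, establishing the two bipartition claims separately: first that no edge has both endpoints of degree exceeding $2$, and second that no edge has both endpoints of degree $2$. In each case the key construction is a swap: given an alleged ``bad'' edge, I build a minimally $2$-connected graph $G'$ on $n$ vertices with $K(G') \geq K(G)$, and then extract a strict improvement to contradict optimality.

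For the first claim, suppose $uv \in E(G)$ with $\deg_G(u) \geq 3$ and $\deg_G(v) \geq 3$. By Theorem~\ref{con_between_pairs_large_degree}, $\kappa_G(u,v)=2$, so $2$-connectedness provides a second $u$--$v$ path $P$ internally disjoint from the edge $uv$. Applying Mader's theorem~\cite{Mader1972} to the cycle $C = P \cup \{uv\}$ (whose edges are all $2$-essential, as $G$ is minimally $2$-connected) yields a degree-$2$ vertex $w$ on $C$; since $u$ and $v$ both have degree at least $3$, $w$ is an internal vertex of $P$ with both neighbours $a,b$ on $P$. Theorem~\ref{plummer} forces $ab \notin E(G)$, otherwise $ab$ would be a chord of $C$. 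The swap $\con(G) = G'$ subdivides $uv$ with a new degree-$2$ vertex $w'$ (replacing the edge $uv$ by the path $u$--$w'$--$v$) and simultaneously smooths $w$ (deleting $w$ and its incident edges, adding the edge $ab$); this preserves $|V|=n$. A Plummer-style check confirms $G'$ is minimally $2$-connected provided $\kappa_G(a,b)=2$, which can be arranged by an appropriate choice of $w$ on $P$.

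For the counting, a bijection between internally disjoint $p$--$q$ path families in $G$ and $G'$, obtained by exchanging the edge $uv$ with the path $u$--$w'$--$v$ and the path $a$--$w$--$b$ with the edge $ab$, yields $\kappa_{G'}(p,q)=\kappa_G(p,q)$ for all pairs $p,q \notin \{w,w'\}$; the $n-1$ degree-$2$-induced contributions of $w$ to $K(G)$ (each equal to $2$ since $G$ is $2$-connected) are exactly replaced by those of $w'$ to $K(G')$. Hence the naive accounting gives $K(G')=K(G)$, and the main obstacle I foresee is extracting a strict improvement. A natural refinement chooses $w$ so that $a,b$ lie on different sides of the target bipartition (one of degree $2$, the other of degree exceeding $2$), strictly reducing a potential counting the number of ``bad'' edges; iterating until bipartiteness, then arguing that at least one step must strictly increase $K$, should yield the contradiction. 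The second claim is handled by a symmetric swap: for adjacent degree-$2$ vertices $u,v$, smooth $v$ to add the new edge $ub$ (where $b$ is the other neighbour of $v$) and compensate by subdividing an appropriate edge elsewhere; the same verification pattern and strict-improvement challenge apply.
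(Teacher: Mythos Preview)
Your swap operations are correct as far as they go, and your bijection argument is essentially right: suppressing the degree-$2$ vertex $w$ and subdividing $uv$ gives a minimally $2$-connected graph $G'$ with $K(G')=K(G)$ exactly. But this equality is precisely the problem. The theorem asserts that \emph{every} optimal graph is bipartite, so you need a strict inequality $K(G')>K(G)$ whenever $G$ has a bad edge. Your construction never produces one, and your proposed fix---iterate while decreasing a ``bad edge'' potential and argue that some step must strictly raise $K$---does not work: you have just shown that each step preserves $K$, so after iterating you obtain a bipartite graph $G''$ with $K(G'')=K(G)$. This proves only that \emph{some} optimal graph is bipartite, which is a strictly weaker statement and leaves open the possibility that $G$ itself is a non-bipartite optimal graph. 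There is no contradiction.

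The paper's constructions are designed to break this symmetry. For adjacent degree-$2$ vertices, the paper does not smooth and resubdivide; instead it takes a maximal path $u_0u_1\cdots u_k$ with $k\geq 3$ whose internal vertices all have degree $2$ and endpoints have degree exceeding $2$, and replaces it by a fan (edges $u_0u_i$ and $u_iu_k$ for each $1\leq i\leq k-1$). This raises $\kappa(u_0,u_k)$ by $k-2\geq 1$ while fixing every other pair, giving a strict gain. For adjacent $u,v$ of degree at least $3$, the paper uses the cut vertex $x$ of $G-uv$: contract $uv$ to $w$ and add a new degree-$2$ vertex $y$ adjacent to $w$ and $x$. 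The point is that the internally disjoint $u$--$x$ paths through $G_1$ and the $v$--$x$ paths through $G_2$ now combine at $w$, yielding $\kappa_{G'}(w,x)\geq \kappa_G(u,x)+\kappa_G(v,x)-1$, while $\kappa_{G'}(y,x)=2$; summing gives a strict increase at the pair $\{w,x\}$ versus $\{u,x\},\{v,x\}$. Your swap keeps the two sides of the cut separate and so cannot exploit this merging.
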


\begin{proof} Since $n \ge 5$ and $K_{2,n-2}$ is a minimally $2$-connected graph with average connectivity exceeding $2$, $G$ is not a cycle. So $G$ has at least two vertices of degree exceeding $2$. We show first that the vertices of degree $2$ form an independent set. If this is not the case, then there exist vertices $u$ and $v$ of degree exceeding $2$ and a $u$--$v$ path  $P: (u=)u_0u_1 \ldots u_k(=v)$, such that $k \ge 3$ and $\deg_G(u_i)=2$ for $1 \le i <k$. Delete the edges of $P$ from $G$ and add the edges $uu_i$ and $u_iv$ for $1 \le i \le k-1$. Let $G'$ be the resulting graph. Then $G'$ has order $n$ and it is readily checked that $G'$ is minimally $2$-connected. Moreover, the total connectivity of $G'$ exceeds the total connectivity of $G$ by $k-2$ since $\kappa_{G'}(u,v)=\kappa_G(u,v) + k-2$, and for all pairs $x,y$ of vertices of $G$ where $\{x,y\}\ne\{u,v\}$ we have $\kappa_{G'}(x,y)=\kappa_{G}(x,y)$.

\medskip

It remains to show that the set of vertices of degree exceeding $2$ is independent.  Suppose, towards a contradiction, that $u$ and $v$ are adjacent vertices of degree at least $3$ in $G$.  Since $G$ is minimally $2$-connected, $G-uv$ has a cut-vertex, say $x$.  Since $G-x$ is connected, it follows that $uv$ is a bridge of $G-x$. So $G-uv-x$ has exactly two components $G_1$ and $G_2$, say, where $G_1$ contains $u$ and $G_2$ contains $v$.  Let $G_i'$ be the subgraph of $G$ induced by $V(G_i) \cup \{x\}$ for $i=1,2$ (see Figure~\ref{VertexStructurePicture}).

\begin{figure}[htb]
\centering{
\begin{tikzpicture}
\path[use as bounding box] (-2, -1.7) rectangle (2, 2.3);
\pgftransformrotate{-20}
\pgftransformxshift{-0.5cm}
\coordinate (A) at (0,-2);
\coordinate (B) at (0,-0.8);
\coordinate (C) at (0,0.8);
\coordinate (D) at (0,2);
\draw (A) to[out=210,in=150,distance=2.5cm] (D);
\draw (D) to[out=-30,in=30,distance=0.5cm] (C);
\draw (C) to[out=210,in=150] (B);
\draw (B) to[out=-30,in=30,distance=0.5cm] (A);
\pgftransformreset
\pgftransformrotate{20}
\pgftransformxshift{0.5cm}
\coordinate (A) at (0,-2);
\coordinate (B) at (0,-0.8);
\coordinate (C) at (0,0.8);
\coordinate (D) at (0,2);
\draw[dashed] (A) to[out=-30,in=30,distance=2.5cm] (D);
\draw[dashed] (D) to[out=210,in=150,distance=0.5cm] (C);
\draw[dashed] (C) to[out=-30,in=30] (B);
\draw[dashed] (B) to[out=210,in=150,distance=0.5cm] (A);
\pgftransformreset
\namedvertex (x) at (0,1.7) {\tiny $x$};
\namedvertex (u) at (-1.1,-1.1) {\tiny $u$};
\namedvertex (v) at (1.1,-1.1) {\tiny $v$};
\phantomvertex (un1) at (-0.75,1.7) {};
\phantomvertex (vn1) at (0.75, 1.9) {};
\phantomvertex (vn3) at (0.75, 1.5) {};
\phantomvertex (u1) at (-1.5,-0.6) {};
\phantomvertex (u2) at (-1.25,-0.5) {};
\phantomvertex (v1) at (1.5,-0.6) {};
\phantomvertex (v2) at (1.25,-0.5) {};
\path
(x) edge (un1)
(x) edge (vn1)
(x) edge (vn3)
(u) edge (v)
(u) edge (u1)
(u) edge (u2)
(v) edge (v2)
(v) edge (v1)
;
\draw (-1.5,0) node {$G'_1$};
\draw (1.5,0) node {$G'_2$};
\end{tikzpicture}
\hspace{1cm}
\begin{tikzpicture}
\path[use as bounding box] (-1.5, -2) rectangle (1.5, 2);
\coordinate (A) at (0,-2);
\coordinate (B) at (0,-0.8);
\coordinate (C) at (0,0.8);
\coordinate (D) at (0,2);
\draw (A) to[out=210,in=150,distance=2.5cm] (D);
\draw (D) to[out=-30,in=30,distance=0.5cm] (C);
\draw (C) to[out=210,in=150] (B);
\draw (B) to[out=-30,in=30,distance=0.5cm] (A);
\draw[dashed] (A) to[out=-30,in=30,distance=2.5cm] (D);
\draw[dashed] (D) to[out=210,in=150,distance=0.5cm] (C);
\draw[dashed] (C) to[out=-30,in=30] (B);
\draw[dashed] (B) to[out=210,in=150,distance=0.5cm] (A);
\namedvertex (x) at (0,1.4) {\tiny $x$};
\namedvertex (w) at (0,-1.4) {\tiny $w$};
\phantomvertex (un1) at (-0.75,1.4) {};
\phantomvertex (vn1) at (0.75, 1.6) {};
\phantomvertex (vn3) at (0.75, 1.2) {};
\phantomvertex (u1) at (-0.75,-1.6) {};
\phantomvertex (u2) at (-0.75,-1.2) {};
\phantomvertex (v1) at (0.75,-1.6) {};
\phantomvertex (v2) at (0.75,-1.2) {};
\path
(x) edge (un1)
(x) edge (vn1)
(x) edge (vn3)
(w) edge (u1)
(w) edge (u2)
(w) edge (v2)
(w) edge (v1)
;
\end{tikzpicture}
\hspace{1cm}
\begin{tikzpicture}
\path[use as bounding box] (-1.5, -2) rectangle (1.5, 2);
\coordinate (A) at (0,-2);
\coordinate (B) at (0,-0.8);
\coordinate (C) at (0,0.8);
\coordinate (D) at (0,2);
\draw (A) to[out=210,in=150,distance=2.5cm] (D);
\draw (D) to[out=-30,in=30,distance=0.5cm] (C);
\draw (C) to[out=210,in=150] (B);
\draw (B) to[out=-30,in=30,distance=0.5cm] (A);
\draw[dashed] (A) to[out=-30,in=30,distance=2.5cm] (D);
\draw[dashed] (D) to[out=210,in=150,distance=0.5cm] (C);
\draw[dashed] (C) to[out=-30,in=30] (B);
\draw[dashed] (B) to[out=210,in=150,distance=0.5cm] (A);
\namedvertex (x) at (0,1.4) {\tiny $x$};
\namedvertex (w) at (0,-1.4) {\tiny $w$};
\phantomvertex (un1) at (-0.75,1.4) {};
\phantomvertex (vn1) at (0.75, 1.6) {};
\phantomvertex (vn3) at (0.75, 1.2) {};
\phantomvertex (u1) at (-0.75,-1.6) {};
\phantomvertex (u2) at (-0.75,-1.2) {};
\phantomvertex (v1) at (0.75,-1.6) {};
\phantomvertex (v2) at (0.75,-1.2) {};
\namedvertex (y) at (0,0) {\tiny $y$};
\path
(x) edge (un1)
(x) edge (vn1)
(x) edge (vn3)
(w) edge (u1)
(w) edge (u2)
(w) edge (v2)
(w) edge (v1)
(w) edge (y)
(y) edge (x)
;
\end{tikzpicture}
}
\caption{A sketch of $G$ (left), $H$ (middle), and $G'$ (right).  Note that $u$, $v$, and $x$ do not necessarily have degree exactly $3$ as drawn.}\label{VertexStructurePicture}
\end{figure}
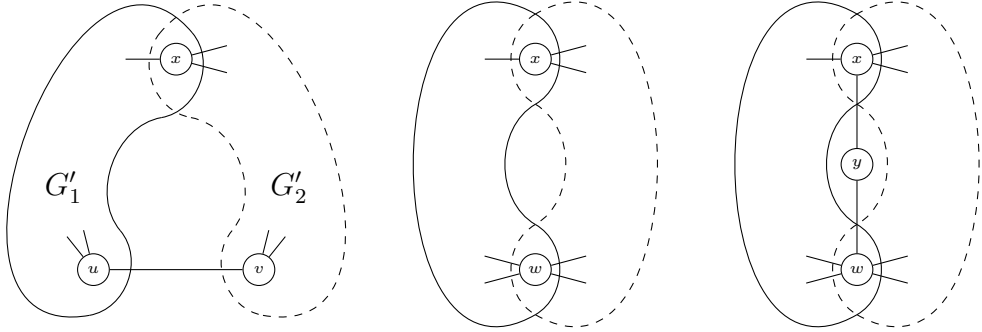

\medskip

\noindent{\bf Fact 1:} $u \not\sim_G x$, and $v \not\sim_G x$.

\smallskip

\noindent
Since $\deg_G(u) \ge 3$, $u$ has a neighbour $a$ in $G-\{v,x\}$. Since $G$ is $2$-connected, there is an $a$--$x$ path $Q_1$ that does not contain $u$, and it must lie in $G'_1$.  Similarly, $v$ has a neighbour $b$ in $G-\{u,x\}$, and there is a $b$--$x$ path $Q_2$ in $G_2'$ that does not contain $v$. So the paths $Q_1$, $Q_2$, and $auvb$ produce a cycle, $C$ say, in which neither $u$ nor $v$ is adjacent with $x$. Since $G$ is minimally $2$-connected, it follows from Theorem \ref{plummer} that $C$ has no chords.  So $u \not\sim x$ and $v \not\sim x$. This completes the proof of Fact 1.

\medskip

Let $H$ be the graph obtained from $G$ by contracting the edge $uv$ to a new vertex labeled $w$.  Let $G'$ be the graph obtained from $H$ by adding vertex $y$ and the edges $xy$ and $yw$ (see Figure~\ref{VertexStructurePicture}).  We prove that $G'$ is a minimally $2$-connected graph of order $n$ with $\kap(G')>\kap(G)$, contradicting the optimality of $G$.

\medskip

\noindent{\bf Fact 2:} $G'$ is $2$-connected. 

\smallskip

\noindent
We show that $H$ is $2$-connected.  Since $G'$ is obtained from $H$ by joining the new vertex $y$ to the vertices $x$ and $w$, it follows by a straightforward argument that $G'$ is also $2$-connected.  To see that $H$ is $2$-connected, we show that every pair of distinct vertices of $H$ lies on a cycle.  First let $a,b\in V(H)-\{w\}=V(G)-\{u,v\}$.  Since $G$ is $2$-connected, there is a cycle $C$ of $G$ containing $a$ and $b$. If $C$ does not contain $u$ or $v$, then $C$ is a cycle of $H$.  If $C$ contains exactly one of $u$ (or $v$), then the cycle obtained from $C$ by replacing $u$ (or $v$, resp.) with $w$ is a cycle of $H$ that contains $a$ and $b$.  So we may assume that $C$ contains both $u$ and $v$.  In this case, $C$ must contain $x$ as well.  By Fact 1, $u \not\sim_G x$ and $v \not\sim_G x$.  So by contracting  the edge $uv$ of $C$ to $w$, we obtain a cycle of $H$ containing $a$ and $b$.  Finally, for any $a\in V(H)-\{w\}$, there is a cycle $C$ of $G$ containing $a$ and the edge $uv$.  Contracting the edge $uv$ of $C$ to $w$ gives a cycle in $H$ containing $a$ and $w$.  So we conclude that $H$, and hence $G'$, is $2$-connected. This completes the proof of Fact 2.

\medskip

\noindent{\bf Fact 3:} $G'$ is minimally $2$-connected.

\smallskip

\noindent
By Fact 2, $G'$ is $2$-connected.  Let $e$ be an edge in $G'$.  We need to show that $G'-e$ has a cut vertex.  First of all, if $e$ is incident to $y$, then either $x$ or $w$ is a cut vertex of $G'-e$.  Next, if $e$ is incident to $w$, then assume without loss of generality that the other endvertex of $e$ is $a\in V(G'_1)$.  Then $au$ is an edge of $G$, and $G-au$ has a cut vertex, say $z$.  If $z\in V(G'_1)$, then $z$ is also a cut vertex in $G'-e$.  Otherwise, if $z\in V(G'_2)$, then every $a$--$u$ path in $G-au$ contains both $z$ and $x$.  In turn, every $a$--$w$ path in $G'-e$ contains $x$, so $x$ is a cut vertex in $G'-e$.

We may now assume that $e$ is not incident with $y$ or $w$.  Without loss of generality, let $e\in E(G'_1)$.  Now $G-e$ has a cut vertex, say $z$.  If $z\in V(G'_1)-\{u\}$, then $z$ is also a cut vertex in $G'-e$.  Otherwise, $z\in V(G_2)\cup\{u\}$.  Let $a$ and $b$ be the endvertices of $e$, and note that every $a$--$b$ path in $G-e$ contains $z$, and hence $u$.  It follows that every $a$--$b$ path in $G'-e$ contains $w$, so $w$ is a cut vertex in $G'-e$. This completes the proof of Fact 3.

\medskip

\noindent{\bf Fact 4:} $\overline{\kappa}(G)<\overline{\kappa}(G')$.

\smallskip

\noindent
We show that $K(G)<K(G')$, from which the statement readily follows.  We demonstrate the following:
\begin{enumerate}[label=(\roman*)]
\item $\kappa_G(u,v)=\kappa_{G'}(w,y)$;
\item $\kappa_G(a,b)\leq \kappa_{G'}(a,b)$ for all $a,b\in V(G)-\{u,v\}$;
\item $\kappa_G(u,z)+\kappa_G(v,z)\leq \kappa_{G'}(w,z)+\kappa_{G'}(y,z)$ for all $z\in V(G)-\{u,v,x\}$; and
\item $\kappa_G(u,x)+\kappa_G(v,x)<\kappa_{G'}(w,x)+\kappa_{G'}(y,x)$.
\end{enumerate}
Summing the left-hand side of (i), (ii), (iii), and (iv) over all possibilities gives $K(G)$, and summing the right-hand side of (i), (ii), (iii), and (iv) over all possibilities gives $K(G')$, so the desired result follows immediately.

For (i), $\kappa_G(u,v)=2$ by Theorem~\ref{con_between_pairs_large_degree}, and since $\deg_{G'}(y)=2$, we have $\kappa_{G'}(w,y) = 2$, by Fact 2.

For (ii), let $a,b\in V(G)-\{u,v\}=V(G')-\{w,y\}$.  If one of $a$ and $b$ belongs to $G_1$ and the other to $G_2$, then $\kappa_{G'}(a,b)=2=\kappa_G(a,b)$.  So assume, without loss of generality, that $a,b\in V(G'_1)-u$. If $\mathcal{P}_{a,b}$ is a collection of $\kappa_G(a,b)$ pairwise internally disjoint $a$--$b$ paths in $G$, then at most one of these paths contains the vertex $u$.  If no member of $\mathcal{P}_{a,b}$ contains $u$, then $\mathcal{P}_{a,b}$ is a collection of $\kappa_G(a,b)$ internally disjoint $a$--$b$ paths in $G'$.  Otherwise, let $P$ be the unique path in $\mathcal{P}_{a,b}$ containing $u$.  If $v$ is also on $P$, then let $P'$ be the path obtained from $P$ by contracting $uv$ to $w$.  Otherwise, if $v$ is not on $P$, then let $P'$ be the path obtained from $P$ by replacing $u$ with $w$.  Then $(\mathcal{P}_{a,b}-P)\cup\{P'\}$ is a collection of $\kappa_G(a,b)$ internally disjoint $a$--$b$ paths in $G'$.  Either way, we conclude that $\kappa_{G}(a,b)\leq \kappa_{G'}(a,b)$.

For (iii), let $z\in V(G)-\{u,v,x\}$.  Assume without loss of generality that $z \in V(G_1)$. Let $\mathcal{P}_{u,z}$ be a family of $\kappa_G(u,z)$ pairwise internally disjoint $u$--$z$ paths in $G$. Any path between $u$ and $z$ that also contains at least one vertex of $G_2$ must necessarily contain both $uv$ and $x$. Thus at most one of the paths in $\mathcal{P}_{u,z}$ contains $v$. If such a $u$--$z$ path $P$ exists, then the path obtained from $P$ by contracting the edge $uv$ to $w$ is a $w$--$z$ path in $G'$. If we replace $u$ by $w$ on all the remaining paths in $\mathcal{P}_{u,z}$, then we obtain a family of $\kappa_G(u,z)$ pairwise internally disjoint $w$--$z$ paths in $G'$. So $\kappa_G(u,z) \le \kappa_{G'}(w,z)$. Since the edge $uv$ and the vertex $x$ separate $z$ and $v$ in $G$, it follows that $\kappa_G(v,z)=2$. Since $\deg_{G'}(y)=2$, we have $\kappa_{G'}(y,z)=2$, by Fact 2. So $\kappa_G(u,z) + \kappa_G(v,z) \le \kappa_{G'}(w,z) + \kappa_{G'}(y,z)$.

For (iv), let $\mathcal{P}_{u,x}$ be a collection of $\kappa_G(u,x)$ pairwise internally disjoint $u$--$x$ paths in $G$.  Exactly one of these paths contains vertices of $G_2$, since such a path necessarily contains the edge $uv$, and there is a $v$--$x$ path in $G'_2$. Let $\mathcal{P}'_{u,x}$ be the collection of all paths in $\mathcal{P}_{u,x}$ whose internal vertices belong to $G_1$. So $|\mathcal{P}'_{u,x}|= \kappa_G(u,x)-1$.  By replacing $u$ with $w$ on every path of $\mathcal{P}'_{u,x}$, we obtain a family $\mathcal{P}''_{u,x}$ of $\kappa_G(u,x)-1$ internally disjoint $w$--$x$ paths of $G'$ whose internal vertices all belong to $G_1$.  By a similar argument, we obtain a family $\mathcal{P}''_{v,x}$ of $\kappa_G(v,x)-1$ internally disjoint $w$--$x$ paths of $G'$ whose internal vertices all belong to $G_2$.  The path $wyx$ is a $w$--$x$ path that is internally disjoint from the paths in $\mathcal{P}''_{u,x} \cup \mathcal{P}''_{v,x}$.  So $\kappa_{G'}(w,x) \geq \kappa_G(u,x)+\kappa_G(v,x)-1$.  Finally, since $\deg_{G'}(y)=2$, we have $\kappa_{G'}(y,x)= 2$, by Fact 2.  Therefore,
\[
\kappa_{G'}(w,x)+ \kappa_{G'}(y,x) \ge \kappa_G(u,x) + \kappa_G(v,x)+1 > \kappa_G(u,x) + \kappa_G(v,x).
\]
This completes the proof of Fact 4 and the theorem.
\end{proof}

We conclude this section by noting that, given a minimally $2$-connected graph $G$ of order $n\geq 5$, for which either the vertices of degree $2$ or the vertices of degree exceeding $2$ are not independent, the proof of Theorem~\ref{bipartite} implicitly describes an algorithm for constructing a minimally $2$-connected graph $G'$ of the same order $n$ with higher average connectivity than $G$. By repeated application of this algorithm we obtain a minimally $2$-connected graph of order $n$ in which the vertices of degree $2$ and those of degree exceeding $2$ are independent. Moreover, the average connectivity of this graph exceeds that of the other graphs that preceded it in the process.

\subsection{An upper bound on the average connectivity of minimally 2-connected graphs}\label{VertexBoundSection}

Using the structural results on optimal minimally $2$-connected graphs obtained in the previous section, we now demonstrate a sharp upper bound on the average connectivity of a minimally $2$-connected graph of order $n$, and characterize the optimal minimally $2$-connected graphs of order $n$, for all $n$ sufficiently large.

We require some terminology.  A graph $G$ is \emph{nearly regular} if the difference between its maximum degree and its minimum degree is at most $1$.  If $G$ is a nearly regular graph of order $n$ and size $m$, then $G$ has degree sequence
\[
\underbrace{d,\dots,d}_{\text{$n-r$ terms}},\underbrace{d+1,\dots,d+1}_{\text{$r$ terms}}
\]
where $d,r\in\mathbb{Z}$ are the unique integers satisfying $2m=dn+r$ and $0\leq r<n$.  We call this sequence a \emph{nearly regular} sequence.

Let $G$ be a graph.  We know that $\kappa(u,v)\leq \min\{\deg(u),\deg(v)\}$ for all pairs of distinct vertices $u$ and $v$ of $G$.  This motivates the following definition.

\begin{definition}
The \emph{potential} of a sequence of positive integers $d_1,d_2,\dots,d_n$ is defined by
\[
P(d_1,d_2,\dots,d_n)=\sum_{1\leq i<j\leq n} \min\{d_i,d_j\}.
\]
For a graph $G$ on $n$ vertices $v_1,v_2,\dots,v_n$, the \emph{potential} of $G$, denoted $P(G)$, is the potential of the degree sequence of $G$; that is,
\begin{align*}
P(G)&=P(\deg(v_1),\deg(v_2),\dots,\deg(v_n))=\sum_{1\leq i<j\leq n}\min\{\deg(v_i),\deg(v_j)\}.
\end{align*}
\end{definition}

Recall that if $\kappa(u,v)=\min\{\deg(u),\deg(v)\}$ for all pairs of distinct vertices $u$ and $v$ of $G$, then we say that $G$ is ideally connected.  Since $\kappa(u,v)\leq \min\{\deg(u),\deg(v)\}$ for all $u,v$, we have $K(G)\leq P(G)$, with equality if and only if $G$ is ideally connected.

We first show that among all sequences of $n$ positive integers whose sum is a fixed number $D$, the sequence that maximizes the potential is nearly regular.

\begin{lemma}\label{Balanced}
Let $d_1,d_2,\dots,d_n$ be a sequence of positive integers, and let $D=\displaystyle\sum_{i=1}^n d_i$.  Let $D=dn+r,$ where $d\geq 0$ and $0\leq r<n.$  Then
\[
P(d_1,d_2,\dots,d_n)\leq P(\underbrace{d,\dots,d}_{\text{$n-r$ terms}},\underbrace{d+1,\dots,d+1}_{\text{$r$ terms}}).  \qedhere
\]
\end{lemma}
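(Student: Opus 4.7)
The plan is to use a smoothing (or equalization) argument. Let $s=(d_1,\dots,d_n)$ be any sequence of positive integers with sum $D$ that is not nearly regular. Then there exist indices $i$ and $j$ with $d_i \geq d_j + 2$. Let $s'$ be the sequence obtained from $s$ by replacing $d_i$ with $d_i - 1$ and $d_j$ with $d_j + 1$. Note that $s'$ is still a sequence of positive integers summing to $D$ (positivity is preserved since $d_i - 1 \geq d_j + 1 \geq 2$). I will show that $P(s') > P(s)$; iterating this operation (which can be done only finitely often since, for instance, $\sum_k d_k^2$ strictly decreases each time) produces a nearly regular sequence with strictly larger potential than $s$, proving the lemma.

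To compare $P(s)$ with $P(s')$, observe that for each index $k \notin \{i,j\}$ only the two terms $\min\{d_i, d_k\}$ and $\min\{d_j, d_k\}$ are affected, and the term $\min\{d_i, d_j\}$ also changes. The plan is to do a three-case analysis on the value of $d_k$ relative to $d_j$ and $d_i$:
\begin{itemize}
\item If $d_k \leq d_j$, both minima equal $d_k$ before and after, so the pair $(i,k),(j,k)$ contributes no change.
\item If $d_j < d_k < d_i$, then $\min\{d_j,d_k\}=d_j$ becomes $d_j+1$ (since $d_k \geq d_j+1$), while $\min\{d_i,d_k\}=d_k$ becomes $\min\{d_i-1,d_k\}=d_k$ (since $d_k \leq d_i-1$); the contribution increases by $1$.
\item If $d_k \geq d_i$, then $\min\{d_j,d_k\}=d_j$ becomes $d_j+1$ (since $d_k \geq d_i \geq d_j+2$), while $\min\{d_i,d_k\}=d_i$ becomes $d_i-1$; the net change is $0$.
\end{itemize}
Finally, for the pair $(i,j)$ itself, $\min\{d_i,d_j\}=d_j$ before and $\min\{d_i-1,d_j+1\}=d_j+1$ after (using $d_i-1 \geq d_j+1$), a strict increase of $+1$.

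Summing these contributions yields $P(s') - P(s) \geq 1 > 0$, regardless of the other entries. This is the key inequality. The main (minor) obstacle is just getting the case analysis right, in particular verifying that the boundary inequalities $d_k \geq d_j+1$, $d_k \leq d_i-1$, and $d_i-1 \geq d_j+1$ all hold whenever we need them, which each follow from $d_i \geq d_j + 2$. Once the strict increase is established, the conclusion is immediate: no sequence with $\max - \min \geq 2$ can maximize $P$, so the maximum is attained on a nearly regular sequence, and any two nearly regular sequences of length $n$ summing to $D$ are permutations of each other and hence have equal potential.
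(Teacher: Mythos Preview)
Your proof is correct and follows essentially the same smoothing argument as the paper: replace a pair of entries that differ by at least $2$ by moving one unit from the larger to the smaller, and show the potential strictly increases. The paper sorts the sequence and always picks the minimum and maximum entries, then computes the gain algebraically as $n-a-b+1$ (where $a,b$ count the entries equal to the min and max); your term-by-term case analysis on $d_k$ yields the same quantity $1+|\{k:d_j<d_k<d_i\}|$ and is arguably more transparent, but the underlying idea is identical.
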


\begin{proof}
Assume, without loss of generality, that $d_1\leq d_2\leq \dots\leq d_n$, and suppose that $d_n-d_1\geq 2$.  It suffices to show that
\[
P(d_1,d_2,\dots,d_n)< P(d_1+1,d_2,\dots,d_{n-1},d_n-1).
\]
Note first that we have
\begin{align*}
P(d_1,d_2,\dots,d_n)&=\sum_{i=2}^n\min\{d_1,d_i\}+\sum_{i=2}^{n-1}\min\{d_i,d_n\}+P(d_2,\dots,d_{n-1})\\
&=(n-1)d_1+\sum_{i=2}^{n-1}d_i+P(d_2,\dots,d_{n-1}).
\end{align*}
Suppose that exactly the first $a$ terms $d_1,d_2,\dots,d_a$ are equal to $d_1$, and exactly the last $b$ terms $d_{n-b+1},\dots, d_n$ are equal to $d_n$.  Certainly, we have $a+b\leq n$ since $d_1<d_n$.  Then
\begin{align*}
&P(d_1+1,d_2,\dots,d_{n-1},d_n-1)\\
&=\sum_{i=2}^n\min\{d_1+1,d_i\}+\sum_{i=2}^{n-1}\min\{d_i,d_n-1\}+P(d_2,\dots,d_{n-1})\\
&=(a-1)d_1+(n-a)(d_1+1)+\sum_{i=2}^{n-b}d_i+(b-1)(d_n-1)+P(d_2,\dots,d_{n-1})\\
&=(n-1)d_1+(n-a)+\sum_{i=2}^{n-b}d_i+(b-1)d_n-(b-1)+P(d_2,\dots,d_{n-1})\\
&=(n-1)d_1+(n-a)+\sum_{i=2}^{n-1}d_i-(b-1)+P(d_2,\dots,d_{n-1})\\
&=(n-1)d_1+\sum_{i=2}^{n-1}d_i+P(d_2,\dots,d_{n-1})+(n-a)-(b-1)\\
&=P(d_1,d_2,\dots,d_n)+n-a-b+1\\
&\geq P(d_1,d_2,\dots,d_n)+1.\qedhere
\end{align*}
\end{proof}

We use the following result of Beineke, Oellermann, and Pippert \cite{BeinekeOellermannPippert2002} to establish sharpness of our upper bound.

\begin{theorem}[{\cite[Section 2]{BeinekeOellermannPippert2002}}] Let $n$ and $m$ be integers such that $3\le n\le m\le \binom{n}{2}$.  Then there is an ideally connected nearly regular simple graph of order $n$ and size $m$. \label{exists_ideally_connected}
\end{theorem}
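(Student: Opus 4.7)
The plan is to construct the required graph explicitly. Write $2m = dn + r$ with $0 \le r < n$; the target nearly regular degree sequence has $n-r$ vertices of degree $d$ and $r$ vertices of degree $d+1$, and since $3 \le n \le m \le \binom{n}{2}$, one checks that $2 \le d \le n-2$ in all non-degenerate cases. Note that parity forces $r$ to have the same parity as $dn$, so in particular $r=0$ is impossible when both $n$ and $d$ are odd.

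The first reduction I would establish is the following observation: a simple graph $G$ with degrees in $\{d,d+1\}$ is ideally connected if and only if $\kappa(G) \ge d$ and every pair $\{u,v\}$ of vertices both of degree $d+1$ satisfies $\kappa_G(u,v) \ge d+1$. This follows from the basic bound $\kappa_G(u,v) \le \min\{\deg(u),\deg(v)\}$ together with Whitney's identity $\kappa(G) = \min_{u,v} \kappa_G(u,v)$. Thus I may focus entirely on engineering sufficient vertex cuts.

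For the regular case $r=0$, I would take a circulant graph $C_n(S)$ on $\mathbb{Z}_n$: choose $S = \{\pm 1, \dots, \pm k\}$ when $d=2k$, and $S = \{\pm 1,\dots,\pm k, \tfrac{n}{2}\}$ when $d=2k+1$ (note that $n$ is forced to be even in the latter case, since $dn$ must equal $2m$). A standard argument exploiting vertex-transitivity and Menger's theorem (or a direct construction of $d$ internally disjoint paths via arithmetic translates) shows $\kappa(C_n(S)) = d$, so the circulant is $d$-regular and $d$-connected, hence ideally connected. For the non-regular case $r > 0$, I would modify this construction: on the vertex set $\mathbb{Z}_n$, select $r$ vertices (say a block of consecutive residues, or an appropriate pairing) and add a small, carefully chosen edge set that raises exactly those $r$ vertices to degree $d+1$. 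When $r$ is even this amounts to a matching on the chosen $r$ vertices; when $r$ is odd (hence $n$ and $d$ odd), I would instead start from the $(d+1)$-regular circulant and delete edges among $n-r$ chosen vertices to drop them to degree $d$.

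The main obstacle lies not in constructing such a graph but in verifying the connectivity invariants simultaneously: the modified graph must remain $d$-connected \emph{and} must provide $d+1$ internally disjoint paths between any two vertices of degree $d+1$. I would address this by analyzing minimum vertex cuts directly; near-regularity forces any cut separating two degree-$(d+1)$ vertices to have size at least $d+1$ unless it isolates a small side, and the circulant's symmetry rules out such small isolating cuts provided the modification is spread appropriately. An alternative, and perhaps cleaner, route is to prove the theorem by induction on $m$ with $n$ fixed: starting from the Hamiltonian cycle $C_n$ (which handles $m=n$) and adding one edge at a time between two vertices of minimum degree in the current graph, one shows (using Theorem~\ref{exists_ideally_connected}-style path-splicing arguments along existing disjoint paths) that ideal connectedness and near-regularity are both preserved; this reduces the problem to a single local-connectivity increment at each step and avoids the need to analyze a global circulant structure.
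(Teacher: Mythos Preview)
The paper does not prove this statement: Theorem~\ref{exists_ideally_connected} is quoted without proof from \cite{BeinekeOellermannPippert2002}, so there is no in-paper argument to compare against. The only hint the paper gives about the original construction appears after Theorem~\ref{KappaBarBound}, where it is noted that the ideally connected nearly regular graph on $k$ vertices and $3k$ edges used in \cite{BeinekeOellermannPippert2002} is $C_k^{(3)}$, the cube of the cycle. Your circulant construction $C_n(\{\pm 1,\dots,\pm k\})$ is exactly the $k$th power of $C_n$, so your regular-case proposal is on the same track as the cited source.

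That said, your proposal is an outline rather than a proof, and the non-regular case is where the real content lies. Saying you would ``add a matching on $r$ consecutive residues'' and then ``analyze minimum vertex cuts directly'' because ``near-regularity forces any cut separating two degree-$(d+1)$ vertices to have size at least $d+1$ unless it isolates a small side'' is not an argument; near-regularity alone does not force this, and the placement of the extra edges genuinely matters (a careless matching can create a pair of degree-$(d+1)$ vertices separated by a $d$-cut). Your alternative inductive route has the same gap: the step ``add one edge between two vertices of minimum degree and show ideal connectedness is preserved'' is precisely the nontrivial claim, since the new edge must boost $\kappa$ between its two endpoints from $d$ to $d+1$, and an arbitrary such edge need not do this. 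Finally, a minor slip: your claim $d\le n-2$ fails at $m=\binom{n}{2}$, where $d=n-1$; this boundary case (the complete graph) is trivial but should be excluded explicitly rather than swept into ``non-degenerate cases''.
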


In fact, we note that most ideally connected nearly regular (multi)graphs are simple. More precisely we make the following straightforward observation.

\begin{observation}\label{MostlySimple}
Let $G$ be a nearly regular ideally connected graph of order $n\geq 3$ and size $m\geq n.$ Then either $G$ is simple, or $G$ has exactly two vertices of maximum degree, this pair of vertices is joined by exactly two edges, and this is the only multiple edge.
\end{observation}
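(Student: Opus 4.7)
The plan is to assume $G$ is not simple and derive the specific multigraph structure claimed. Fix a pair of vertices $u,v$ joined by $k\geq 2$ parallel edges. The main technical step is to show that for every third vertex $w\in V(G)\setminus\{u,v\}$,
\[
\kappa_G(u,w)\;\leq\;\deg(u)-k+1.
\]
The reason is that in any family of internally disjoint $u$--$w$ paths, at most one path can have $v$ as an internal vertex, so at most one of the $k$ parallel $uv$-edges is available as a first edge at $u$. Every other path in the family must start with one of the $\deg(u)-k$ edges joining $u$ to a neighbour different from $v$, and distinct paths in the family must use distinct first edges at $u$ (even when some end in a length-$1$ parallel edge to $w$).

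With this bound in hand, ideal connectedness gives $\min\{\deg(u),\deg(w)\}\leq\deg(u)-k+1$ for every such $w$. If $\deg(w)\geq\deg(u)$ then this forces $k\leq 1$, contradicting $k\geq 2$; hence $\deg(w)<\deg(u)$. Near-regularity then pins down $\deg(w)=\deg(u)-1$, and feeding this back into the inequality yields $k\leq 2$, so $k=2$. In particular $u$ has maximum degree, and running the symmetric argument with the roles of $u$ and $v$ swapped shows that $v$ does too, while every vertex other than $u,v$ has degree $\deg(u)-1$. Thus $u$ and $v$ are the only two vertices of maximum degree.

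For uniqueness of the multiple edge, suppose some further pair $\{x,y\}\neq\{u,v\}$ were joined by multiple edges. Applying the previous argument with $\{x,y\}$ in place of $\{u,v\}$ would force $\{x,y\}$ to be the unique pair of maximum-degree vertices, contradicting that $\{u,v\}$ already plays that role. Hence $uv$, with multiplicity $2$, is the only multiple edge. The one delicate point in the whole argument is the initial bound: in a multigraph, several length-$1$ parallel paths $u$--$w$ are permitted in an internally disjoint family, so one must count first edges at $u$ rather than neighbours of $u$, and observe carefully that the unique ``through-$v$'' path consumes only one of the $k$ parallel $uv$-edges.
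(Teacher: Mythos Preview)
Your argument is correct. The key inequality $\kappa_G(u,w)\leq \deg(u)-k+1$ is justified exactly as you say: distinct internally disjoint $u$--$w$ paths must leave $u$ along distinct edges, and since $v\neq w$, any path whose first edge lands at $v$ has $v$ as an internal vertex, so at most one of the $k$ parallel $uv$-edges can be used. From there, ideal connectedness and near-regularity force $\deg(w)=\deg(u)-1$ for every $w\neq u,v$ and $k=2$, and the uniqueness of the multiple edge follows by rerunning the argument on any other candidate pair.

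The paper records this result as a ``straightforward observation'' and gives no proof, so there is no alternative argument to compare against; your write-up supplies precisely the kind of short verification the authors had in mind. (The hypothesis $m\geq n$ is not actually needed in your argument---it is there because the observation is only applied in that regime---and the hypothesis $n\geq 3$ is used exactly where you need a third vertex $w$.)
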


We are now ready to prove the main result of this section.

\begin{theorem}\label{KappaBarBound}
Let $G$ be a minimally $2$-connected graph of order $n$.  Then
\[
\overline{\kappa}(G)\leq 2+\tfrac{(n-2)^2}{4n(n-1)}<\tfrac{9}{4}.
\]
Moreover, let $n=4k+\ell$, where $k,\ell\in\mathbb{Z}$ and $0\leq \ell<4$.
\begin{enumerate}
\item If $k\geq 8$ and $\ell=0$, then
\[
\overline{\kappa}(G)\leq 2+\tfrac{n^2-4n}{4n(n-1)}=2+\tfrac{n-4}{4(n-1)},
\]
with equality if and only if $G$ is obtained from an ideally connected $6$-regular graph of order $k$ by subdividing every edge.
\item If $k\geq 30$ and $\ell=1$, then
\[
\overline{\kappa}(G)\leq 2+\tfrac{n^2-6n+13}{4n(n-1)},
\]
with equality if and only if $G$ is obtained from an ideally connected nearly regular (multi)graph of order $k$ and size $n-k=3k+1$ by subdividing every edge.
\item If $k\geq 68$ and $\ell=2$, then
\[
\overline{\kappa}(G)\leq 2+\tfrac{n^2-8n+60}{4n(n-1)},
\]
with equality if and only if $G$ is obtained from an ideally connected nearly regular graph of either order $k$ and size $n-k=3k+2$, or order $k+1$ and size $n-k-1=3k+1$, by subdividing every edge.
\item If $k\geq 30$ and $\ell=3$, then
\[
\overline{\kappa}(G)\leq 2+\tfrac{n^2-6n+17}{4n(n-1)},
\]
with equality if and only if $G$ is obtained from an ideally connected nearly regular graph of order $k+1$ and size $n-k-1=3k+2$ by subdividing every edge.
\end{enumerate}
\end{theorem}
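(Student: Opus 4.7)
The plan is to first prove $\overline{\kappa}(G)<\tfrac{9}{4}$ for every minimally 2-connected graph $G$ of order $n$, and then refine the analysis residue class by residue class to pin down the extremal configurations. The cases $n\leq 4$ (where $G$ is a cycle with $\overline{\kappa}=2$) being trivial, it suffices to bound $\overline{\kappa}(G)$ when $G$ is optimal of order $n\geq 5$. By Theorem~\ref{bipartite}, such a $G$ is bipartite with parts $A=\{v:\deg_G(v)=2\}$ and $B=\{v:\deg_G(v)\geq 3\}$; writing $a=|A|$ and $b=|B|$, each vertex of $A$ has exactly two neighbours in $B$, so $|E(G)|=2a$ and $\sum_{v\in B}\deg_G(v)=2a$, and $G$ is precisely the subdivision (at each vertex of $A$) of a (multi)graph $H$ on vertex set $B$ with $a$ edges.

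I would then split $K(G)$ by pair type and bound each contribution. Because $\kappa_G(u,v)\leq\min\{\deg_G(u),\deg_G(v)\}$ and $G$ is 2-connected, pairs inside $A$ and pairs joining $A$ to $B$ each contribute exactly $2$. For $u,v\in B$, internally disjoint $u$--$v$ paths in $G$ correspond bijectively to edge-disjoint $u$--$v$ paths in $H$, so $\kappa_G(u,v)=\lambda_H(u,v)\leq\min\{\deg_H(u),\deg_H(v)\}=\min\{\deg_G(u),\deg_G(v)\}$. Using $\binom{a}{2}+ab+\binom{b}{2}=\binom{n}{2}$ and applying Lemma~\ref{Balanced} to the $B$-degree sequence (sum $2a$, with $b$ terms), writing $2a=db+r$ with $0\leq r<b$, I obtain
\begin{equation*}
\overline{\kappa}(G)-2 \;\leq\; \frac{(d-2)\binom{b}{2}+\binom{r}{2}}{\binom{n}{2}}.
\end{equation*}

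For the main bound I would analyze $f(b,r):=(d-2)\binom{b}{2}+\binom{r}{2}$ with $d=(2n-2b-r)/b$, subject to $d\geq 3$ (equivalently $b\leq 2n/5$). A direct expansion gives
\begin{equation*}
2f \;=\; -4b^2+(2n+4-2r)b + r^2+r-2n,
\end{equation*}
a concave quadratic in $b$ whose real-valued maximum equals $(n-2)^2/4 + r(5r-2n)/4$, attained at $b=(n+2-r)/4$. Since $r<b\leq 2n/5$, the term $r(5r-2n)/4$ is non-positive, so $f\leq (n-2)^2/8$, and dividing by $\binom{n}{2}$ yields $\overline{\kappa}(G)-2\leq (n-2)^2/(4n(n-1))<\tfrac{1}{4}$, as required.

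For the case-by-case characterizations, writing $n=4k+\ell$, the continuous optimum $b=(n+2)/4$ shows the candidate integer maxima are $b\in\{k,k+1\}$. A direct substitution into the formula for $2f$ shows the extremal $(b,d,r)$ is $(k,6,\ell)$ for $\ell\in\{0,1,2\}$ and $(k+1,5,k-1)$ for $\ell\in\{2,3\}$, corresponding exactly to the orders and sizes of $H$ listed in the theorem (and for $\ell=2$ the two options give the same value of $f$, explaining the alternative in case (c)). A routine but tedious algebraic comparison against all other feasible $b$ (with $d\geq 7$ or $d\leq 4$) confirms these are strictly dominated once $k$ exceeds the thresholds $8,30,68,30$ stated. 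To realize each maximum, invoke Theorem~\ref{exists_ideally_connected} (together with Observation~\ref{MostlySimple} when $H$ must carry a double edge) to produce an ideally connected nearly regular $H$ of the prescribed order and size, and subdivide every edge; the resulting $G$ is readily checked to be minimally 2-connected, and because $H$ is ideally connected (hence ideally edge-connected) each inequality above becomes an equality. The main obstacle is this integer optimization: the continuous argument cleanly delivers the $9/4$ bound, but confirming the unique extremal configuration in each residue class, and that an ideally connected nearly regular auxiliary graph of the necessary order and size exists, is where the stated thresholds on $k$ enter.
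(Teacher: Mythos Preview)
Your approach is essentially the same as the paper's: reduce to the bipartite structure via Theorem~\ref{bipartite}, pass to the underlying multigraph $H$ on the high-degree side, bound the $B$--$B$ contribution by the potential of the degree sequence, apply Lemma~\ref{Balanced}, and then optimize over the number of high-degree vertices.  Two slips are worth flagging.

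First, the claimed bijection ``internally disjoint $u$--$v$ paths in $G$ $\leftrightarrow$ edge-disjoint $u$--$v$ paths in $H$'' is not correct: edge-disjoint paths in $H$ may share an internal vertex of $B$, and then their lifts to $G$ are not internally disjoint.  The right identity is $\kappa_G(u,v)=\kappa_H(u,v)$ for $u,v\in B$ (this is exactly what the paper uses when it writes $K(G)=2\bigl[\binom{n}{2}-\binom{s}{2}\bigr]+K(H)$).  Your argument survives because you only need $\kappa_G(u,v)\le\min\{\deg_G(u),\deg_G(v)\}$, and the equality case you quote (ideally \emph{connected} $H$) is the correct one.

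Second, your expansion of $2f$ is off: with $d=(2n-2b-r)/b$ one gets
\[
2f \;=\; (2n-4b-r)(b-1)+r(r-1) \;=\; -4b^2+(2n+4-r)\,b+r^2-2n,
\]
not $-4b^2+(2n+4-2r)b+r^2+r-2n$.  With the correct coefficients, maximizing over $b$ at fixed $r$ gives $(n-2)^2/4 + r(17r-4n-8)/16$, and the inequality $17r\le 4n+8$ does \emph{not} follow from $r<b\le 2n/5$, so your ``$r(5r-2n)\le 0$'' step no longer closes the argument.  The clean fix (and what the paper does) is to write $f=(n-2b)(b-1)-r(b-r)/2$, drop the nonpositive term $-r(b-r)/2$, and maximize the remaining quadratic $(n-2b)(b-1)$ directly to obtain $(n-2)^2/8$.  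The residue-class analysis then proceeds exactly as you sketch, matching the paper's case work.
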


\begin{proof}
Let $G$ be an optimal minimally $2$-connected graph of order $n$.  By Theorem~\ref{bipartite}, $G$ is a bipartite graph, with the set of vertices of degree $2$ and the set of vertices of degree exceeding $2$ being independent sets.  Let $H$ be the (multi)graph obtained from $G$ by replacing every vertex of degree $2$ with an edge between its neighbours, and note that $G$ can be recovered from $H$ by subdividing each of its edges.   Suppose that $G$ has $s$ vertices of degree at least $3$, and hence $n-s$ vertices of degree $2$.  Then $H$ has $s$ vertices and $n-s$ edges.  Note that $s\leq \tfrac{2}{5}n$, as the sum of the degrees of the $s$ vertices of degree at least $3$ must be equal to $2(n-s)$.
By a straightforward argument, we have
\begin{align*}
K(G)&=2\left[\tbinom{n}{2}-\tbinom{s}{2}\right]+K(H)\\
&\leq 2\left[\tbinom{n}{2}-\tbinom{s}{2}\right]+P(H),
\end{align*}
with equality if and only if $H$ is ideally connected.  Let $2(n-s)=ds+r$ for $d,r\in\mathbb{Z}$ and $0\leq r<s$.  Then, by Lemma~\ref{Balanced},
\begin{align*}
2\left[\tbinom{n}{2}-\tbinom{s}{2}\right]+P(H)&\leq 2\tbinom{n}{2}-2\tbinom{s}{2}+d\tbinom{s}{2}+\tbinom{r}{2}\\
&=2\tbinom{n}{2}+(d-2)\tbinom{s}{2}+\tbinom{r}{2}\\
&=n(n-1)+\left[\tfrac{2(n-s)-r}{s}-2\right]\tbinom{s}{2}+\tbinom{r}{2}\\
&=n(n-1)+\left[\tfrac{2n-4s-r}{s}\right]\tfrac{s(s-1)}{2}+\tfrac{r(r-1)}{2}\\
&=n(n-1)+(2n-4s)(s-1)/2-r(s-1)/2+r(r-1)/2\\
&=n(n-1)+(n-2s)(s-1)-r(s-r)/2,
\end{align*}
with equality if and only if $H$ is nearly regular (i.e., $H$ has $r$ vertices of degree $d+1$ and $s-r$ vertices of degree $d$).  So far, the bound on $K(G)$ is tight if and only if $H$ is ideally connected and nearly regular.  By Theorem \ref{exists_ideally_connected}, there exists such a graph $H$ (in fact, a simple graph) for any choice of $n$ and $s$ where $n-s \le \tbinom{s}{2}$.

To prove the general bound given in the theorem statement, we first observe, using elementary calculus, that $(n-2s)(s-1)$ achieves a maximum of $\tfrac{(n-2)^2}{8}$ at $s=\tfrac{n+2}{4}$. Thus
\[
K(G)\leq n(n-1)+(n-2s)(s-1)-r(s-r)/2\leq n(n-1)+(n-2s)(s-1)\leq n(n-1)+\tfrac{(n-2)^2}{8},
\]
Dividing through by $\binom{n}{2}$ gives the general upper bound on $\overline{\kappa}(G)$.

We now prove the exact upper bounds given by parts (a), (b), (c), and (d) of the theorem statement.  To do so, we determine the exact value(s) of $s$ at which the quantity
\[
(n-2s)(s-1)-r(s-r)/2
\]
is maximized, and we show that $n-s \le \tbinom{s}{2}$ at all such values, which guarantees that the maximum is actually attained by some graph.  We consider parts (a), (b), (c), and (d) separately.

For part (a), let $n=4k$ with $k\geq 8$.  We show that
\[
g_k(s)=(4k-2s)(s-1)-r(s-r)/2\leq 2k^2-2k=\tfrac{n^2-4n}{8},
\]
with equality if and only if $s=k$.  First, if $s=k$, then $d=6$ and $r=0$, and thus $g_k(k)=2k^2-2k.$  Next, if $s=k+1$, then $d=5$ and $r=k-7>0$, so $g_k(k+1)=2k^2-2k-4(k-7)<2k^2-2k.$  Lastly, if $s\not\in\{k,k+1\},$ let $f_k(s)=(4k-2s)(s-1)$.  Clearly $g_k(s)\leq f_k(s)$, and we show that $f_k(s)<2k^2-2k$ for all $s\not\in\{k,k+1\}$.  The function $f_k(s)$ is a quadratic in $s$ which attains its maximum value at $s=k+\tfrac{1}{2}$.  Thus, if $s<k$, then $f_k(s)<f_k(k)=2k^2-2k$, and if $s>k+1$, then $f_k(s)<f_k(k+1)=2k^2-2k$.

In conclusion, we have
\[
K(G)\leq n(n-1)+\tfrac{n^2-4n}{8},
\]
with equality if and only if $H$ is an ideally connected nearly regular (multi)graph on $k$ vertices and $n-k=3k$ edges (i.e., $H$ is $6$-regular).  By Observation~\ref{MostlySimple}, $H$ must be a simple graph.   Since $k \ge 8$, we have $n-k=3k \le \binom{k}{2}$, so indeed, Theorem~\ref{exists_ideally_connected} guarantees sharpness.  The bound on $\overline{\kappa}(G)$ follows by dividing through by $\binom{n}{2}$. This completes the proof of part (a).

For part (b), let $n=4k+1$ with $k\geq 30$.  We claim that
\[
g_k(s)=(4k+1-2s)(s-1)-r(s-r)/2\leq 2k^2-2k+1=\tfrac{n^2-6n+13}{8},
\]
with equality if and only if $s=k$.  First off, if $s=k$, then $d=6$ and $r=2$, and it follows that $g_k(k)=2k^2-2k+1$.  It remains to show that $g_k(s)<g_k(k)$ for all $s\neq k$.  We consider three cases.

\medskip
\noindent
\textbf{Case 1:} $s\in(k-\tfrac{k-2}{9},k)$

\noindent
Let $s=k-i$ for some integer $i\in[1,\tfrac{k-2}{9}).$  It follows that $d=6$ and $r=2+8i< k-i=s$ (note that $2+8i<k-i$ since $i<\tfrac{k-2}{9}$).  Now
\[
g_k(k-i)=34i^2+(14-4k)i+2k^2-2k+1
\]
is a quadratic in $i$ with positive leading coefficient, so for $i\in [1,\tfrac{k-2}{9})$,
\[
g_k(k-i)\leq \max\left\{g_k(k-1),g_k\left(k-\tfrac{k-2}{9}\right)\right\}.
\]
We verify that $g_k(k)>g_k(k-1)=2k^2-6k+49$ for all $k\geq 13$, and that $g_k(k)>g_k\left(k-\tfrac{k-2}{9}\right)=\tfrac{160}{81}k^2-\tfrac{100}{81}k-\tfrac{35}{81}$ for all $k\geq 30$.  Therefore, for $s\in(k-\tfrac{k-2}{9},k)$, we have $g_k(s)<g_k(k).$

\medskip
\noindent
\textbf{Case 2:} $s\in(k,k+\tfrac{k+2}{7}).$

\noindent
Let $s=k+i$ for some integer $i\in[1,\tfrac{k+2}{7}).$  It follows that $d=5$ and $r=k+2-7i$ (note that $k+2-7i<k+i$ since $i\geq 1$ and $k+2-7i>0$ since $i<\tfrac{k+2}{7}$).  Now
\[
g_k(k+i)=26i^2+(-12-4k)i+2k^2+1
\]
is a quadratic in $i$ with positive leading coefficient, so for $i\in [1,\tfrac{k+2}{7})$,
\[
g_k(k+i)\leq \max\left\{g_k(k+1),g_k\left(k+\tfrac{k+2}{7}\right)\right\}.
\]
We verify that $g_k(k)>g_k(k+1)=2k^2-4k+15$ for all $k\geq 8$, and $g_k(k)>g_k\left(k+\tfrac{k+2}{7}\right)=\tfrac{96}{49}k^2-\tfrac{36}{49}k+\tfrac{15}{49}$ for all $k\geq 30$.  Therefore, for $s\in(k,k+\tfrac{k+2}{7})$, we have $g_k(s)<g_k(k).$

\medskip
\noindent
\textbf{Case 3:} $s\leq k-\tfrac{k-2}{9}$ or $s\geq k+\tfrac{k+2}{7}$

\noindent
Let $f_k(s)=(4k+1-2s)(s-1)$, and we certainly have $g_k(s)\leq f_k(s)$ (with equality if and only $r(s-r)=0$).  By elementary calculus, $f_k(s)$ is increasing when $s<k$ and decreasing when $s>k+1$.  So if $s\leq k-\tfrac{k-2}{9}$, then
\[
g_k(s)\leq f_k(s)\leq f_k\left(k-\tfrac{k-2}{9}\right)=\tfrac{160}{81}k^2-\tfrac{100}{81}k-\tfrac{35}{81},
\]
which is strictly less than $g_k(k)$ for $k\geq 30$.  Similarly, if $s\geq k+\tfrac{k+2}{7}$, then
\[
g_k(s)\leq f_k(s)\leq f_k\left(k+\tfrac{k+2}{7}\right)=\tfrac{96}{49}k^2-\tfrac{36}{49}k-\tfrac{15}{49},
\]
which is strictly less than $g_k(k)$ for $k\geq 30.$

In conclusion, we have
\[
K(G)\leq n(n-1)+\tfrac{n^2-6n+13}{8},
\]
with equality if and only if $H$ is an ideally connected nearly regular (multi)graph on $k$ vertices and $n-k=3k+1$ edges.  One can verify that $H$ has exactly two vertices of maximum degree $7$, so by Observation~\ref{MostlySimple}, $H$ may have a single multiple edge between these vertices, but has no other multiple edges.  Since $k \ge 30$, we have $n-k=3k+1 \le \binom{k}{2}$, so indeed, Theorem~\ref{exists_ideally_connected} guarantees sharpness.  The bound on $\overline{\kappa}(G)$ follows by dividing through by $\binom{n}{2}$.  This completes the proof of part (b).

\medskip

The analogous statements for parts (c) and (d) are proven similarly.
\end{proof}

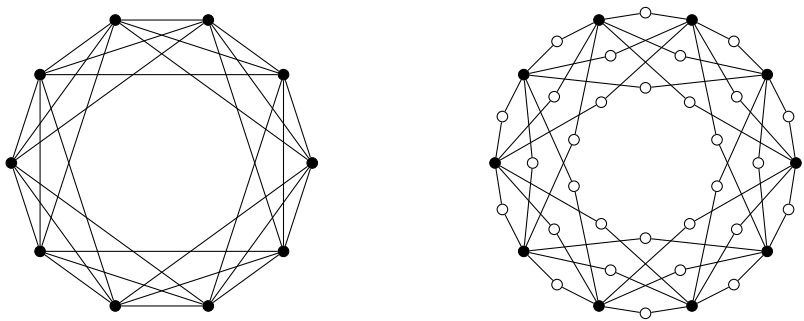
\begin{figure}[htb]
\begin{minipage}{\textwidth}
\centering{
\raisebox{-0.5\height}{\begin{tikzpicture}
\pgfmathsetmacro{\n}{10}
\pgfmathtruncatemacro{\m}{\n-1}
\foreach \x in {0,...,\m}
{
\vertex (\x) at (360*\x/\n:2cm) {};
}
\foreach \x in {0,...,\m}
{
\pgfmathtruncatemacro{\y}{Mod(\x+1,\n)}
\pgfmathtruncatemacro{\z}{Mod(\x+2,\n)}
\pgfmathtruncatemacro{\w}{Mod(\x+3,\n)}
\path
(\x) edge (\y)
(\x) edge (\z)
(\x) edge (\w);
}
\end{tikzpicture}}
\hspace{2cm}
\raisebox{-0.5\height}{\begin{tikzpicture}
\pgfmathsetmacro{\n}{10}
\pgfmathtruncatemacro{\m}{\n-1}
\foreach \x in {0,...,\m}
{
\vertex (\x) at (360*\x/\n:2cm) {};
\pgfmathtruncatemacro{\a}{\n+\x}
\pgfmathtruncatemacro{\b}{2*\n+\x}
\pgfmathtruncatemacro{\c}{3*\n+\x}
\hollowvertex (\a) at (180/\n+360*\x/\n:2cm) {};
\hollowvertex (\b) at (360/\n+360*\x/\n:1.5cm) {};
\hollowvertex (\c) at (540/\n+360*\x/\n:1cm) {};
}
\foreach \x in {0,...,\m}
{
\pgfmathtruncatemacro{\y}{Mod(\x+1,\n)}
\pgfmathtruncatemacro{\z}{Mod(\x+2,\n)}
\pgfmathtruncatemacro{\w}{Mod(\x+3,\n)}
\pgfmathtruncatemacro{\a}{\x+\n}
\pgfmathtruncatemacro{\b}{\x+2*\n}
\pgfmathtruncatemacro{\c}{\x+3*\n}
\path
(\x) edge (\a)
(\a) edge (\y)
(\x) edge (\b)
(\b) edge (\z)
(\x) edge (\c)
(\c) edge (\w);
}
\end{tikzpicture}}
}
\end{minipage}
\caption{The graph $C_{10}^{(3)}$ (left) and the graph $S_{40}$ (right) obtained by subdividing every edge of $C_{10}^{(3)}$.  The vertices resulting from subdivision are indicated by hollow circles.}\label{SubdividedCirculant}
\end{figure}

The ideally connected nearly regular graphs described in \cite{BeinekeOellermannPippert2002} can now be used to give explicit constructions of optimal minimally $2$-connected graphs of order $n$ in each of the parts of Theorem~\ref{KappaBarBound}. In part (a), where $n=4k$ with $k\geq 8$, the ideally connected nearly regular graph on $k$ vertices and $3k$ edges (i.e., ideally connected $6$-regular graph on $k$ vertices) described in~\cite{BeinekeOellermannPippert2002} is $C_k^{(3)}$ (the cube of the cycle $C_k$, obtained from $C_k$ by joining all pairs of vertices at distance at most $3$).  Let $S_n$ be the graph obtained by subdividing every edge of $C_k^{(3)}$.  Then $S_n$ is an optimal minimally $2$-connected graph of order $n$. See Figure~\ref{SubdividedCirculant} for a drawing of $S_n$ in the case where $n=40$.  The other cases can be described in a similar manner.

We make particular mention of the fact that in case (b), where $n=4k+1$ with $k\geq 30$, we can add any one edge to $C_{k}^{(3)}$ (even creating one multiple edge if we like) to produce an ideally connected nearly regular graph of order $k$ and size $3k+1$.  Subdividing every edge of such a graph gives an optimal minimally $2$-connected graph of order $n$.  So indeed, the ideally connected nearly regular graph in the statement of Theorem~\ref{KappaBarBound}(b) may be a multigraph.  In parts (a), (c), and (d), however, the ideally connected nearly regular graph will be simple.

Finally, if $G$ is a minimally $2$-connected graph of order $n$, where $n$ is a small value not covered by Theorem~\ref{KappaBarBound}, then with the notation used in the proof of Theorem~\ref{KappaBarBound}, the bound
\begin{align}\label{TotalConBound}
K(G)\leq n(n-1)+(n-2s)(s-1)-r(s-r)/2
\end{align}
still holds, with equality if and only if $H$ is an ideally conected nearly regular graph on $s$ vertices and $n-s$ edges.  The exact maximum value of the right-hand side of (\ref{TotalConBound}) can be determined by checking all possibilities for $s$.  From the work of~\cite{BeinekeOellermannPippert2002}, we can guarantee that this bound will be sharp as long as some value of $s$ at which the maximum occurs satisfies $n-s\leq \binom{n}{2}$.

\section{Average edge-connectivity of minimally 2-edge-connected graphs}\label{Edge}

In this section, we obtain results about the structure of edge-optimal minimally $2$-edge-connected graphs, and use this to prove a sharp upper bound on the average edge-connectivity of minimally $2$-edge-connected graphs.

We first recall some elementary properties of minimally 2-edge-connected graphs, given by Chaty and Chein in~\cite{ChatyChein1979}.  A non-trivial graph having no cut vertices is called \emph{nonseparable}, and the \emph{blocks} of a non-trivial graph $G$ are the maximal nonseparable subgraphs of $G$.

\begin{lemma}[\cite{ChatyChein1979}]\label{Properties}\
\begin{enumerate}
\item A connected graph $G$ is minimally $2$-edge-connected if and only if $G$ has no bridge and for each $e\in E(G)$, the graph $G-e$ has a bridge that separates the endvertices of $e$.
\item \label{Blocks} Every block of a minimally $2$-edge-connected graph is minimally $2$-edge-connected.
\item \label{ParallelCut} If $G$ is a minimally $2$-edge-connected graph, then $G$ has no triple edges, and if $G$ has a pair of parallel edges between vertices $u$ and $v$, then the removal of these two edges separates $u$ and $v$.
\item \label{GluingProperty} If $G$ and $H$ are two minimally $2$-edge-connected graphs, then the graph obtained from the disjoint union $G\cup H$ by identifying $u\in V(G)$ and $v\in V(H)$ is minimally $2$-edge-connected.
\end{enumerate}
\end{lemma}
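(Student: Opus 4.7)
My plan is to prove part (a) first as the key working characterization, since parts (b), (c), and (d) all follow from (a) via short structural arguments. Throughout I take ``minimally $2$-edge-connected'' to mean $\lambda(G) = 2$ together with $\lambda(G-e) \leq 1$ for every edge $e$.

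For the forward direction of (a), $\lambda(G) = 2$ immediately rules out bridges, and for each edge $e = uv$ the graph $G-e$ is connected (since $\lambda(G) = 2$) with $\lambda(G-e) \leq 1$, so $G-e$ has a bridge $f$. The key claim is that $f$ separates $u$ from $v$: the pair $\{e,f\}$ is a $2$-edge cut of $G$, and if $u$ and $v$ lay on the same side of this cut then $e$ would sit entirely inside that side, so $f$ alone would disconnect $G$, contradicting $\lambda(G) = 2$. The backward direction is immediate: the no-bridge hypothesis gives $\lambda(G) \geq 2$, while for each edge $e = uv$ the bridge of $G-e$ separating $u,v$ witnesses a $2$-edge cut of $G$, giving $\lambda(G) = 2$, and also $\lambda(G-e) \leq 1$, which together make $G$ minimally $2$-edge-connected.

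For (b), every block $B$ is $2$-connected (hence $2$-edge-connected), because $G$ has no bridge. Given $e = uv \in B$, the bridge $f$ supplied by (a) must lie in $B$: there is a $u$--$v$ path in $B-e$ (since $B$ is $2$-edge-connected), this path is also a $u$--$v$ path in $G-e$, and it must therefore pass through $f$. The same observation shows $f$ is a bridge of $B-e$ separating $u$ from $v$, so (a) applied to $B$ finishes the argument. For (c), a triple edge $e_1, e_2, e_3$ between $u$ and $v$ would contradict (a) applied to $e_1$, since $e_2$ and $e_3$ form two edge-disjoint $u$--$v$ paths in $G - e_1$; and with exactly two parallel edges $e_1, e_2$, the bridge of $G-e_1$ separating $u$ from $v$ cannot be any edge other than $e_2$, so removing $\{e_1,e_2\}$ separates $u$ from $v$. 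For (d), let $K = G \sqcup_w H$ denote the identification. $K$ has no bridge, since every edge lies entirely in $G$ or $H$ and the two pieces are joined at the single vertex $w$, which preserves connectivity on each side when one edge is removed. For an edge $e \in G$ with endpoints $a, b$, the bridge $f$ of $G-e$ given by (a) is still a bridge of $K - e$ separating $a$ from $b$: since $H$ meets the $G$-side only through $w$, the whole of $H$ lies on whichever side of the cut $\{e,f\}$ contains $w$, leaving the other side disconnected from it; (a) applied to $K$ then gives that $K$ is minimally $2$-edge-connected.

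The main care point running through all four parts is verifying that the bridge delivered by (a) continues to separate the correct pair of vertices after we pass to a block, remove a parallel edge, or identify a vertex; in each case this reduces to the same short argument about which component of $G - e - f$ contains which distinguished vertex, which is exactly the analysis already performed in the proof of (a).
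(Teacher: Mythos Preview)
The paper does not give its own proof of this lemma; it is simply quoted from Chaty and Chein~\cite{ChatyChein1979}. So there is no ``paper's proof'' to compare against, and your task was really to supply a self-contained argument.

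Your argument is correct and efficient. The reduction of (b)--(d) to repeated applications of (a) is exactly the right organization. One small wording point in (b): the phrase ``every block $B$ is $2$-connected (hence $2$-edge-connected)'' is slightly awkward for blocks consisting of two vertices joined by a pair of parallel edges, which are nonseparable and $2$-edge-connected but not $2$-connected under the usual convention requiring at least three vertices. Since your subsequent argument only uses that $B$ is $2$-edge-connected (so that $B-e$ is connected and contains a $u$--$v$ path), nothing breaks; it would be cleaner to say directly that every block of a bridgeless graph is $2$-edge-connected. Everything else---the separation argument in (a), the location of the bridge $f$ inside $B$ in (b), the forced identification $f=e_2$ in (c), and the ``$H$ lies entirely on the $w$-side'' reasoning in (d)---is sound.
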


A \emph{necklace} is a nonseparable minimally $2$-edge-connected simple graph.  A graph $G$ is \emph{extensible} between vertices $x$ and $y$ if the graph $G^z_{xy}$ obtained from $G$ by adding a new vertex $z$ and the edges $xz$ and $yz$ is minimally $2$-edge-connected.  The graph $G^z_{xy}$ is called an \emph{extension} of $G$ between $x$ and $y$ through $z$, and we refer to this operation as \emph{extending} $x$ and $y$ through $z$.

\begin{lemma}[{\cite[Corollary~2]{ChatyChein1979}}]\label{Extensible}
Let $G$ be a necklace.  For distinct vertices $x$ and $y$ in $G$, if $\lambda(x,y)\geq 3$, then $G$ is extensible between $x$ and $y$.
\end{lemma}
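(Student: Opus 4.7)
The plan is to verify directly that $G^z_{xy}$ satisfies the two requirements of being minimally $2$-edge-connected: it is $2$-edge-connected, and every edge is essential. I will use the characterization from Lemma~\ref{Properties}(a), which says that in a $2$-edge-connected graph an edge $e$ is essential precisely when $G-e$ contains a bridge separating the endvertices of $e$.

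For $2$-edge-connectivity, one checks that no edge of $G^z_{xy}$ is a bridge: deleting $xz$ (resp.\ $yz$) still leaves $z$ connected to the rest of the graph via the other pendant edge together with a path of $G$ from $y$ to $x$ (resp.\ $x$ to $y$), and deleting $e\in E(G)$ leaves $G-e$ connected (since $G$ is $2$-edge-connected) with $z$ still incident to both $x$ and $y$. For essentiality of the pendant edges, deleting $xz$ makes $yz$ a bridge whose removal isolates $z$ from $x$, so $xz$ is essential; symmetrically for $yz$. The main task is to show that each edge $e=uv\in E(G)$ remains essential in $G^z_{xy}$. Since $G$ is minimally $2$-edge-connected, Lemma~\ref{Properties}(a) supplies a bridge $f$ of $G-e$ separating $u$ and $v$; let $(A,B)$ be the resulting bipartition of $V(G)$, with $u\in A$ and $v\in B$, so that $\{e,f\}$ is exactly the set of $G$-edges between $A$ and $B$.

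The key step is to use the hypothesis $\lambda_G(x,y)\ge 3$ to force $\{x,y\}\subseteq A$ or $\{x,y\}\subseteq B$: if instead $x$ and $y$ lay on opposite sides of the bipartition, then $\{e,f\}$ would be a $2$-edge cut of $G$ separating $x$ from $y$, contradicting $\lambda_G(x,y)\ge 3$. With $x$ and $y$ on the same side, both new edges $xz$ and $yz$ attach $z$ into that side, so in $G^z_{xy}$ the set $\{e,f\}$ is still the full set of edges across the refined bipartition (either $(A\cup\{z\},B)$ or $(A,B\cup\{z\})$). Hence $f$ remains a bridge in $G^z_{xy}-e$ separating $u$ and $v$, so $e$ is essential in $G^z_{xy}$. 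I expect this comparison of $2$-edge cuts in $G$ with those in $G^z_{xy}$ to be the only nontrivial point of the proof: the hypothesis $\lambda_G(x,y)\ge 3$ is precisely what rules out the $2$-edge cuts of $G$ that attaching $z$ via $xz$ and $yz$ could destroy, and every remaining verification is a direct application of Lemma~\ref{Properties}(a).
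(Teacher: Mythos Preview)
Your argument is correct. The paper does not supply its own proof of this lemma; it is quoted as Corollary~2 of \cite{ChatyChein1979} and used as a black box. Your verification via Lemma~\ref{Properties}(a) is exactly the natural direct argument: the only substantive step is ruling out that the $2$-edge cut $\{e,f\}$ of $G$ could separate $x$ from $y$, and the hypothesis $\lambda_G(x,y)\ge 3$ does precisely that. Note that your proof never uses nonseparability or simplicity of $G$, so in fact you have shown the stronger statement that any minimally $2$-edge-connected graph is extensible between vertices $x,y$ with $\lambda(x,y)\ge 3$; the necklace hypothesis is not needed for this direction.
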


We also make use of the following straightforward lemma.

\begin{lemma}\label{Adjacent}
Let $G$ be a minimally $2$-edge-connected graph.  If $u$ and $v$ are adjacent in $G$, then $\lambda(u,v)=2$.
\end{lemma}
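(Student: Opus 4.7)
The plan is to exhibit an edge cut of size $2$ that separates $u$ and $v$, which, combined with the fact that $\lambda_G(u,v) \ge \lambda(G) = 2$, forces $\lambda_G(u,v) = 2$. The whole argument is a direct application of parts (a) and (c) of Lemma~\ref{Properties}, with a small case split according to whether the adjacency between $u$ and $v$ is realised by one edge or by two parallel edges (no more are possible by part (c), which rules out triple edges).

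First I would handle the case where $u$ and $v$ are joined by a single edge $e = uv$. By Lemma~\ref{Properties}(a), the graph $G - e$ contains a bridge $f$ that separates the endvertices of $e$, i.e., $u$ and $v$ lie in different components of $G - e - f$. Hence $\{e, f\}$ is an edge cut of $G$ separating $u$ from $v$, so by Menger's theorem $\lambda_G(u,v) \le 2$. Combined with the lower bound $\lambda_G(u,v) \ge 2$ that comes from $G$ being $2$-edge-connected, this gives $\lambda_G(u,v) = 2$.

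Next I would deal with the case where $u$ and $v$ are joined by two parallel edges $e_1, e_2$. Here Lemma~\ref{Properties}(c) says directly that removing $\{e_1, e_2\}$ disconnects $u$ from $v$, so $\{e_1, e_2\}$ is itself an edge cut of size $2$ separating $u$ and $v$, and again $\lambda_G(u,v) \le 2$.

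There is no real obstacle here: the lemma is essentially a one-line corollary of the structural facts already recorded. The only thing to be careful about is allowing multi-edges, which is why the split into the two cases is needed; in both cases the witnessing $2$-edge-cut is produced immediately by the previously stated properties.
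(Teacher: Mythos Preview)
Your proof is correct and follows essentially the same idea as the paper's proof: pick an edge $e$ between $u$ and $v$, use the fact that $G-e$ has a bridge separating $u$ and $v$, and conclude that $\{e,e'\}$ is a $2$-edge-cut. The paper's proof does this in one stroke without the case split, since Lemma~\ref{Properties}(a) applies to any edge $e$ regardless of whether a parallel edge exists (when there are parallel edges $e_1,e_2$, the bridge of $G-e_1$ separating $u$ and $v$ is simply $e_2$), so your second case is already subsumed by the first.
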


\begin{proof}
Let $e$ be an edge between $u$ and $v$.  Since $G$ is minimally $2$-edge-connected, $G-e$ has a bridge $e'$ that separates $u$ and $v$.  Every $u$-$v$ path in $G-e$ must contain $e'$, so there are at most two edge-disjoint paths between $u$ and $v$ in $G$.
\end{proof}

\subsection{Structural properties of edge-optimal minimally 2-edge-connected graphs}\label{Structure}
Recall that we call a minimally $2$-edge-connected graph of order $n$ having maximum average edge-connectivity an \emph{edge-optimal} minimally $2$-edge-connected graph of order $n$.  For $n\geq 5$, we prove that every edge-optimal minimally $2$-edge-connected graph $G$ is bipartite, with the set of vertices of degree $2$ and the set vertices of degree at least $3$ being the partite sets.  We also demonstrate that $G$ is $2$-connected, i.e., $G$ is a necklace.  First, we prove that the vertices of degree $2$ in an edge-optimal minimally $2$-edge-connected graph of order $n\geq 5$ form an independent set.  We use the following short lemma.

\begin{lemma}\label{ExtensiblePairExists}
Let $G$ be an edge-optimal minimally $2$-edge-connected graph of order $n\geq 5$.  Then $G$ contains a pair of vertices $x$ and $y$ that lie in the same block of $G$ and satisfy $\lambda(x,y)\geq 3$.
\end{lemma}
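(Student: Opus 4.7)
The plan is to combine a direct lower bound on $\lamb(G)$ with a short block-cut-tree reduction. Since $G$ is edge-optimal and $n\geq 5$, I would first show that $\lamb(G)>2$ by comparing $G$ to an explicit minimally $2$-edge-connected graph on $n$ vertices whose average edge-connectivity strictly exceeds $2$. The graph $K_{2,n-2}$ is a natural choice: deleting any one of its edges reduces one of the vertices on the side of size $n-2$ to degree $1$, producing a bridge, so $K_{2,n-2}$ is minimally $2$-edge-connected. A short count of the $\lambda$-values (the pair on the side of size $2$ contributes $n-2$, while every other pair of distinct vertices contributes exactly $2$) gives $\Lambda(K_{2,n-2})=(n-2)+4(n-2)+(n-2)(n-3)=n^2-4$, and hence $\lamb(K_{2,n-2})=\tfrac{2(n-2)(n+2)}{n(n-1)}>2$ whenever $n\geq 5$. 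Edge-optimality of $G$ then forces $\lamb(G)>2$, and consequently some pair of vertices $u,w$ of $G$ satisfies $\lambda_G(u,w)\geq 3$.

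Next, I would argue that such a pair can be pulled into a common block. If $u$ and $w$ already lie in the same block, then taking $x=u$ and $y=w$ finishes the proof. Otherwise, consider the unique path in the block-cut tree of $G$ from the block containing $u$ to the block containing $w$, and let $v$ be the first cut vertex along that path; then $v$ lies in the same block of $G$ as $u$, and $v$ separates $u$ from $w$ in $G$. Since every $u$--$w$ path in $G$ passes through $v$, any edge cut of $G$ separating $u$ from $v$ must also separate $u$ from $w$; hence $\lambda_G(u,v)\geq \lambda_G(u,w)\geq 3$. Setting $x=u$ and $y=v$ completes the argument.

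The proof is essentially routine and I do not anticipate a genuine obstacle. The only delicate point is to verify that the comparison graph actually satisfies both of the required properties, namely minimal $2$-edge-connectedness on $n$ vertices and average edge-connectivity strictly greater than $2$; I would use $K_{2,n-2}$ because both properties are immediate, but any family of minimally $2$-edge-connected graphs on $n\geq 5$ vertices containing a pair of vertices with three edge-disjoint paths between them would serve equally well.
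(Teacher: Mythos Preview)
Your proposal is correct and follows essentially the same approach as the paper: compare $G$ to $K_{2,n-2}$ to force some pair with $\lambda\geq 3$, then pass to the first cut vertex on a $u$--$w$ path if the pair does not already lie in a common block. Your write-up is somewhat more detailed (you compute $\Lambda(K_{2,n-2})$ explicitly and invoke the block-cut tree), but the argument is the same.
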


\begin{proof}
Since there is a minimally $2$-edge-connected graph on $n$ vertices with average edge-connectivity strictly greater than $2$ (take $K_{2,n-2},$ for example), and since $G$ has maximum average edge-connectivity among all such graphs, there is at least one pair of vertices $x,y$ in $G$ such that $\lambda(x,y)\geq 3$.  If $x$ and $y$ are in the same block, then we are done.  If $x$ and $y$ are not in the same block, then let $z$ be the first cut vertex that appears internally on every $x$--$y$ path.  Then $x$ and $z$ are in the same block and $\lambda(x,z)\geq 3$.
\end{proof}

\begin{theorem} \label{th:verticesdegree2}
Let $G$ be an edge-optimal minimally $2$-edge-connected graph of order $n\geq 5$.  Then no two vertices of degree $2$ are adjacent in $G$.
\end{theorem}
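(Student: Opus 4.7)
My plan is to argue by contradiction: suppose $G$ has two adjacent vertices $u$ and $v$ of degree $2$. Since $n\geq 5$ and $K_{2,n-2}$ is minimally $2$-edge-connected with $\lamb(K_{2,n-2})>2$, edge-optimality forces $\lamb(G)>2$, so $G$ is not a cycle. Consequently there is a path $P\colon u_0u_1\cdots u_k$ in $G$ with $\deg(u_0),\deg(u_k)\geq 3$, $\deg(u_i)=2$ for $1\leq i\leq k-1$, and $uv\in E(P)$; since $u,v$ are internal degree-$2$ vertices of this path, $k\geq 3$. I will build a minimally $2$-edge-connected graph $G'$ of order $n$ with $\lamb(G')>\lamb(G)$ by first \emph{smoothing} $u_1$ in $G$ and then \emph{extending} the result at a well-chosen pair.

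Let $H=G-u_1+u_0u_2$; the edge $u_0u_2$ is new because $u_2$ has only the two neighbours $u_1,u_3$ in $G$ and $u_3\neq u_0$. I claim $H$ is minimally $2$-edge-connected of order $n-1$. Two-edge-connectivity follows from $\lambda_G(u_0,u_2)=2$, which forces the existence of a $u_0$--$u_2$ path in $G$ avoiding $u_1$. For essentiality of a given $e'\in E(H)$: if $e'=u_0u_2$, then the fact that one of the two edge-disjoint $u_0$--$u_2$ paths in $G$ passes through $u_1$ forces $\lambda_{G-u_1}(u_0,u_2)=1$, providing a bridge in $H-e'$ separating $u_0$ and $u_2$; and if $e'\in E(G)-\{u_0u_1,u_1u_2\}$, a bridge $f$ of $G-e'$ separating the endvertices of $e'$ either survives in $H-e'$ (when $f\notin\{u_0u_1,u_1u_2\}$) or is replaced in that role by the new edge $u_0u_2$ (when $f$ lies on the chain).

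Since $\deg_G(u_1)=2$ gives $\lambda_G(u_1,w)=2$ for every $w\neq u_1$, and smoothing preserves edge-connectivity between pairs not involving $u_1$, we obtain $\Lambda(H)=\Lambda(G)-2(n-1)$; in particular $\lamb(H)>2$. The argument used in the proof of Lemma~\ref{ExtensiblePairExists}, applied to $H$, then yields vertices $x,y$ in a common block $B$ of $H$ with $\lambda_H(x,y)\geq 3$. Since $\lambda_H(x,y)\geq 3$, Lemma~\ref{Properties}(c) rules out $B$ being a pair of parallel edges, so by Lemma~\ref{Properties}(b), $B$ is a necklace. Lemma~\ref{Extensible} then furnishes an extension of $B$ between $x$ and $y$ through a new vertex $u^*$; re-gluing with the other blocks of $H$ via Lemma~\ref{Properties}(d) yields a minimally $2$-edge-connected graph $G'$ of order $n$.

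Finally I compare total edge-connectivities. For each pair $\{a,b\}\subseteq V(H)$, the vertex $u^*$ is a degree-$2$ subdivision of an edge $xy$, so $\lambda_{G'}(a,b)=\lambda_{H+xy}(a,b)$, where $H+xy$ denotes $H$ with an added $x$--$y$ edge; and $\lambda_{G'}(u^*,w)=2$ for every $w\neq u^*$ since $\deg_{G'}(u^*)=2$. Summing yields $\Lambda(G')=\Lambda(H+xy)+2(n-1)$, and combined with $\Lambda(G)=\Lambda(H)+2(n-1)$ this gives
\[
\Lambda(G')-\Lambda(G)=\Lambda(H+xy)-\Lambda(H)\geq 1,
\]
with the strict inequality because the added edge $xy$ provides an extra edge-disjoint $x$--$y$ path, so $\lambda_{H+xy}(x,y)\geq\lambda_H(x,y)+1$. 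Hence $\lamb(G')>\lamb(G)$, the desired contradiction. The main obstacle is the careful case analysis above verifying that smoothing preserves the minimally $2$-edge-connected property, especially the sub-case where the bridge witnessing essentiality of an edge of $G$ lies on the chain being smoothed.
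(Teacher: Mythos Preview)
Your proof is correct and follows essentially the same strategy as the paper: remove a degree-$2$ vertex from the chain and reinsert it via extension at a pair $x,y$ with $\lambda(x,y)\ge 3$. The only real difference is the order of operations---the paper extends $G$ first (forming $G_1$) and then contracts the edge $uv$ (forming $G_2$), whereas you smooth $u_1$ first (forming $H$) and then extend in $H$. Your bookkeeping via $\Lambda(H)=\Lambda(G)-2(n-1)$ and $\Lambda(G')=\Lambda(H+xy)+2(n-1)$ is a clean way to organise the comparison. One small point worth tightening: when you invoke Lemma~\ref{Extensible} you need $B$ to be a necklace, i.e.\ \emph{simple}, not merely ``not a $2$-cycle''; this does follow from Lemma~\ref{Properties}\ref{ParallelCut} together with nonseparability of $B$ (parallel edges in a nonseparable minimally $2$-edge-connected graph force $|V(B)|=2$), but you might state that step explicitly. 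In fact your treatment of this issue---extending the block and then re-gluing via Lemma~\ref{Properties}\ref{GluingProperty}---is arguably more careful than the paper's, which applies Lemma~\ref{Extensible} to $G$ directly without first verifying that $G$ is a necklace.
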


\begin{proof}
Suppose otherwise that $u$ and $v$ are adjacent vertices of degree $2$ in $G$.  By Lemma~\ref{ExtensiblePairExists}, there is a pair of vertices in $G$, say $x$ and $y$, such that $\lambda(x,y)\geq 3$ and $x$ and $y$ are in the same block.  Since $\deg(u)=\deg(v)=2,$ we know that $\{u,v\}\cap \{x,y\}= \emptyset.$  By Lemma~\ref{Extensible}, $G$ is extensible between $x$ and $y$.

Let $G_1$ be the graph obtained from $G$ by extending $x$ and $y$ through new vertex $z$, and let $G_2$ be the graph obtained from $G_1$ by contracting the edge $uv$ to vertex $w$.  Note that $G_2$ has order $n$.  We claim that
\begin{enumerate}[label=(\roman*)]
\item $G_2$ is minimally $2$-edge-connected, and
\item $\overline{\lambda}(G_2)>\overline{\lambda}(G)$,
\end{enumerate}
which contradicts the fact that $G$ is edge-optimal.

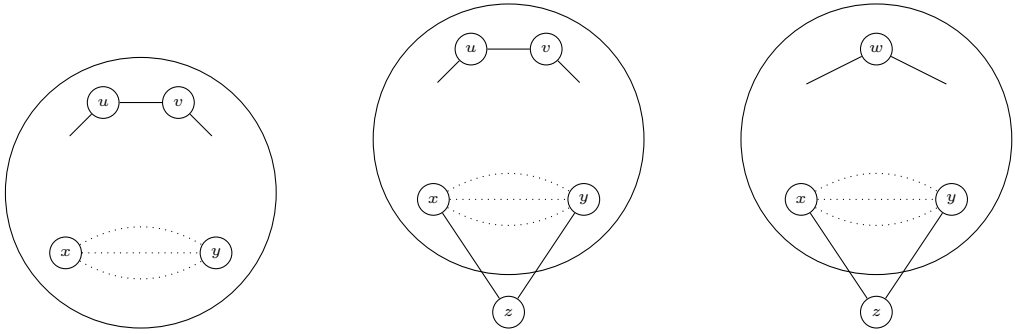
\begin{figure}[htb]
\centering{
\begin{tikzpicture}
\namedvertex (u) at (1,2) {\tiny $u$};
\namedvertex (v) at (2,2) {\tiny $v$};
\phantomvertex (un) at (0.5,1.5) {};
\phantomvertex (vn) at (2.5,1.5) {};
\namedvertex (x) at (0.5,0) {\tiny $x$};
\namedvertex (y) at (2.5,0) {\tiny $y$};
\path
(un) edge (u)
(vn) edge (v)
(u) edge (v)
;
\path[dotted]
(x) edge (y)
(x) edge[bend right] (y)
(x) edge[bend left] (y)
;
\draw (1.5,0.8) circle (1.8);
\end{tikzpicture}
\hspace{1cm}
\begin{tikzpicture}
\namedvertex (u) at (1,2) {\tiny $u$};
\namedvertex (v) at (2,2) {\tiny $v$};
\phantomvertex (un) at (0.5,1.5) {};
\phantomvertex (vn) at (2.5,1.5) {};
\namedvertex (x) at (0.5,0) {\tiny $x$};
\namedvertex (y) at (2.5,0) {\tiny $y$};
\namedvertex (vp) at (1.5,-1.5) {\tiny $z$};
\path
(un) edge (u)
(vn) edge (v)
(u) edge (v)
(x) edge (vp)
(vp) edge (y)
;
\path[dotted]
(x) edge (y)
(x) edge[bend right] (y)
(x) edge[bend left] (y)
;
\draw (1.5,0.8) circle (1.8);
\end{tikzpicture}
\hspace{1cm}
\begin{tikzpicture}
\namedvertex (u) at (1.5,2) {\tiny $w$};
\phantomvertex (un) at (0.5,1.5) {};
\phantomvertex (vn) at (2.5,1.5) {};
\namedvertex (x) at (0.5,0) {\tiny $x$};
\namedvertex (y) at (2.5,0) {\tiny $y$};
\namedvertex (vp) at (1.5,-1.5) {\tiny $z$};
\path
(un) edge (u)
(u) edge (vn)
(x) edge (vp)
(vp) edge (y)
;
\path[dotted]
(x) edge (y)
(x) edge[bend right] (y)
(x) edge[bend left] (y)
;
\draw (1.5,0.8) circle (1.8);
\end{tikzpicture}
}
\caption{The graphs $G,$ $G_1,$ and $G_2$ from left to right.  The dotted lines indicate edge-disjoint $x$-$y$ paths.  }\label{Degree2Picture}
\end{figure}

To see (i), first note that $G_1$ is minimally $2$-edge-connected, since $G$ is extensible between $x$ and $y$.  Now let $a$ and $b$ be distinct vertices in $V(G_2).$  If $w\not\in\{a,b\}$, then there are two edge-disjoint $a$-$b$ paths in $G_1$.  If either of these paths contains the edge $uv$, contract $uv$ to $w$ in this path, and we obtain two edge-disjoint $a$-$b$ paths in $G_2$.  If $w\in\{a,b\}$, then assume without loss of generality that $a=w$.  Then $b\in V(G_1)-\{u,v\}$, and there are two edge-disjoint $u$--$b$ paths in $G_1$, one of which must contain the edge $uv$.  These paths give rise to two edge-disjoint $w$--$b$ paths in $G_2$ when we contract $uv$ to $w$.  Therefore, $G_2$ is $2$-edge-connected.  Let $e$ be any edge in $G_2$.  If $e$ is incident to $w$, then since $\deg_{G_2}(w)=2$, the other edge incident to $w$ is a bridge in $G_2-e$.  Otherwise, if $e$ is not incident to $w$ in $G_2$, then $e$ must also be an edge in $G_1$.  Since $G_1$ is minimally $2$-edge-connected, $G_1-e$ has a bridge, say $f$.  If $f$ is not incident to $u$ or $v$, then $f$ is also a bridge in $G_2-e$.  Suppose otherwise that $f$ is incident to $u$ or $v$.  Then either edge incident to $w$ in $G_2$ is different from $e$ and is a bridge in $G_2-e$.

For (ii), first note that $u$ and $v$ each have degree $2$ in $G$, and $w$ and $z$ each have degree $2$ in $G_2$, so $\lambda_G(u,a)=\lambda_{G_2}(w,a)=2$ and $\lambda_G(v,a)=\lambda_{G_2}(z,a)=2$ for all $a\in V(G_2)-\{w,z\}$, and $\lambda_G(u,v)=\lambda_{G_2}(w,z)=2$.  Let $a,b\in V(G_2)-\{w,z\}=V(G)-\{u,v\}$.  Consider a collection $\mathcal{P}_{a,b}$ of $\lambda_G(a,b)$ edge-disjoint $a$-$b$ paths in $G.$  If any path in $\mathcal{P}_{a,b}$ contains the edge $uv$, contract $uv$ to $w$ in this path to obtain a collection of $\lambda_{G}(a,b)$ edge-disjoint $a$-$b$ paths in $G_2$.  So $\lambda_G(a,b)\leq \lambda_{G_2}(a,b)$.  Finally, $\lambda_G(x,y)< \lambda_{G_2}(x,y)$, since we have added the extra $x$-$y$ path through $z$.  We conclude that $\overline{\lambda}(G)<\overline{\lambda}(G_2)$.
\end{proof}

In order to prove that the vertices of degree at least $3$, in an edge-optimal minimally $2$-edge-connected graph, are independent, we require two lemmas concerning the structure within each block of an edge-optimal minimally $2$-edge-connected graph.  
The first lemma tells us that every block in an edge-optimal minimally $2$-edge-connected graph $G$ of order $n\geq 5$ has average edge-connectivity exceeding $2$.

\begin{lemma}\label{BlockContract}
Let $G$ be an edge-optimal minimally $2$-edge-connected graph of order $n\geq 5$.  Then for every block $B$ of $G$, $\overline{\lambda}(B)>2$; i.e., there is some pair of vertices in $B$ with edge-connectivity at least $3$.
\end{lemma}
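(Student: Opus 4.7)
The plan is to argue by contradiction: suppose some block $B$ of $G$ satisfies $\lamb(B)=2$, so every pair in $B$ has edge-connectivity exactly $2$ within $B$. I would first show that $B$ must be a cycle (possibly the two-vertex graph $C_2$ with two parallel edges). Since $B$ is nonseparable and minimally $2$-edge-connected by Lemma~\ref{Properties}(b), it admits an ear decomposition starting from a cycle $C$; any non-trivial ear $P$ added to $C$ would give three edge-disjoint paths between its endpoints (the two arcs of $C$ together with $P$), contradicting $\lamb(B)=2$. Thus $B=C$ is a cycle. Since $G$ is edge-optimal of order $n\ge 5$ and $\lamb(K_{2,n-2})>2$, we have $\lamb(G)>2$; Lemma~\ref{ExtensiblePairExists} then supplies a block $B^{*}\ne B$ and a pair $x,y\in V(B^{*})$ with $\lambda_{B^{*}}(x,y)\ge 3$, and Lemma~\ref{Extensible} says that $B^{*}$ is extensible between $x$ and $y$.

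I will produce, in each of two cases, a minimally $2$-edge-connected graph $G'$ of order $n$ with $K(G')\ge K(G)+1$, contradicting optimality. In \textbf{Case 1}, $B$ contains a non-cut vertex $u$ of $G$; then $\deg_G(u)=2$ with neighbours $a,b\in V(B)$, and I would take
\[
G' := (G-u-ua-ub) + ab + u + xu + uy,
\]
shortening the cycle $B$ via the chord $ab$ and reinserting $u$ as the new vertex of an extension of $B^{*}$ between $x$ and $y$. The shortened cycle and the extended $B^{*}$ are each minimally $2$-edge-connected, so Lemma~\ref{Properties}(d) yields that $G'$ is minimally $2$-edge-connected. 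Since $u$ has degree $2$ in both $G$ and $G'$, its contribution to $K$ is unchanged; for every other pair $\{p,q\}$, any edge-disjoint $p$--$q$ path family in $G$ translates to $G'$ by replacing any sub-path $a$--$u$--$b$ with the direct edge $ab$, giving $\lambda_{G'}(p,q)\ge \lambda_G(p,q)$, with strict inequality on $\{x,y\}$ because $xuy$ is a new edge-disjoint $x$--$y$ path. Hence $K(G')\ge K(G)+1$.

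In \textbf{Case 2}, every vertex of $B$ is a cut vertex of $G$; I would pick an edge $v_1 v_2$ of $B$, form $H$ from $G$ by identifying $v_1$ and $v_2$ into a new vertex $v$ and deleting the resulting self-loop(s), and set $G' := H+u+xu+uy$. Writing $T_i$ for the portion of $G$ attached to $v_i$ outside $B$, the key observation for minimal $2$-edge-connectivity is that whenever $e\notin E(B)$ the two arcs of $B$ survive in $G-e$, forcing $\lambda_{G-e}(v_1,v_2)\ge 2$, so the bridge of $G-e$ cannot separate $v_1$ from $v_2$ and therefore descends to a bridge of $H-e$; for $e \in E(B) \setminus \{v_1 v_2\}$, the edges of the new cycle $B - v_1 v_2$ play the same bridge role in $H - e$. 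A short case analysis by the location of $w$ (in $T_1$, in $T_2$, or elsewhere on $B$) yields $\lambda_G(v_1,w)+\lambda_G(v_2,w)=\lambda_{G'}(v,w)+\lambda_{G'}(u,w)$ for each $w\notin\{v_1,v_2\}$; together with $\lambda_G(v_1,v_2)=\lambda_{G'}(v,u)=2$, non-negative changes on all remaining pairs (the bottleneck of $2$ across $B$ between the $T_1$- and $T_2$-sides is removed), and the $+1$ gain on $\{x,y\}$ from the extension, this gives $K(G')\ge K(G)+1$.

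The main obstacle will be the careful bookkeeping in Case~2, where the identification can in principle redistribute connectivities across many pairs: one must verify that the loss from pairs involving $v_1,v_2$ in $G$ is exactly offset by the gain from pairs involving the new vertex $u$ in $G'$, so that the net $+1$ arises solely from the extension. Provided this balance and the bridge-preservation described above go through, the contradiction closes in both cases.
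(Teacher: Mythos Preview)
Your argument is essentially correct and your plan would close the lemma, but it takes a different route from the paper and has one small rough edge worth flagging.

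The paper avoids your case split entirely. Having located a block $B$ with $\lamb(B)=2$ of order $p$, and a pair $x,y$ in some other block with $\lambda(x,y)\ge 3$, it contracts \emph{all} of $B$ to a single vertex $\beta$ and simultaneously extends between $x$ and $y$ a total of $p-1$ times through new vertices $z_1,\dots,z_{p-1}$. The resulting graph $G'$ has order $n$, is minimally $2$-edge-connected by Lemma~\ref{Properties}\ref{Blocks}, Lemma~\ref{Properties}\ref{GluingProperty} and Lemma~\ref{Extensible}, and a short comparison of contributions gives $\lamb(G')>\lamb(G)$. This is cleaner because it never needs to distinguish whether $B$ contains a non-cut vertex, and the bookkeeping collapses to matching pairs in $V(B)$ against pairs in $\{\beta,z_1,\dots,z_{p-1}\}$.

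Your incremental approach (remove or merge a single vertex of $B$ and add one extension) is a legitimate alternative, and the mechanics you sketch do go through. Two small points: in Case~1, when $|V(B)|=2$ your formula ``$+ab$'' adds a loop since $a=b$; the fix is simply to drop that term in this subcase, as $G-u$ is already minimally $2$-edge-connected. In Case~2, your location analysis for $w$ should also explicitly cover $w\in T_j$ with $j\neq 1,2$ (the verification is identical to the ``elsewhere on $B$'' case), and your claimed \emph{equality} $\lambda_G(v_1,w)+\lambda_G(v_2,w)=\lambda_{G'}(v,w)+\lambda_{G'}(u,w)$ can fail when $w=y$ and $x\in\{v_1,v_2\}$ --- but it fails in the right direction (the $G'$-side is strictly larger), so your overall inequality survives. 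With these tweaks your route is sound; the paper's contract-all-at-once construction is simply more economical.
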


\begin{proof}
Suppose otherwise that $G$ has a block $B$ with $\overline{\lambda}(B)=2$, and let $p=|V(B)|$ (note that $p\geq 2$ and that $B$ is a cycle). By Lemma~\ref{ExtensiblePairExists}, there is a pair of vertices in $G$, say $x$ and $y$, such that $\lambda(x,y)\geq 3$ and $x$ and $y$ are in the same block.  Evidently, $B$ does not contain both $x$ and $y$, so assume $y\not\in B$.  Let $G'$ be the graph obtained by extending $G$ a total of $p-1$ times between $x$ and $y$  through the new vertices $z_1,\dots, z_{p-1}$, and then contracting all vertices of $B$ to a single vertex $\beta$.  Note that $G'$ has order $n$, and is easily seen to be minimally $2$-edge-connected, by Lemma~\ref{Properties}\ref{Blocks}, Lemma~\ref{Properties}\ref{GluingProperty}, and Lemma~\ref{Extensible}.

We show that $\overline{\lambda}(G)<\overline{\lambda}(G')$, which contradicts the edge-optimality of $G$.  First of all, let $v\in V(G)-V(B)$.  Let $b_v$ be the unique vertex in $B$ at shortest distance from $v$ in $G$.  Then $\lambda_{G}(v,b)=2$ for all $b\in V(B)-b_v$, since $\lambda_G(b_v,b)=2$, by the assumption that $\overline{\lambda}(B)=2$.  Further, $\lambda_{G}(v,b_v)\leq \lambda_{G'}(v,\beta)$, since any collection of $\lambda_{G}(v,b_v)$ edge-disjoint $v$--$b_v$ paths in $G$ gives rise to $\lambda_G(v,b_v)$ edge-disjoint $v$--$\beta$ paths in $G'$ by replacing $b_v$ with $\beta$.  Therefore,
\[
\sum_{b\in V(B)}\lambda_G(v,b)=\lambda_G(v,b_v)+2(p-1)\leq \lambda_{G'}(v,\beta)+\sum_{i=1}^{p-1}\lambda_{G'}(v,z_i)
\]
for any $v\in V(G)-V(B)$.

Now let $u,v\in V(G)-V(B)$.  Then $\lambda_G(u,v)\leq \lambda_{G'}(u,v)$, since any collection of $\lambda_G(u,v)$ edge-disjoint $u$-$v$ paths in $G$ gives rise to a collection of $\lambda_G(u,v)$ edge-disjoint $u$-$v$ paths in $G'$ when we contract $B$ to $\beta$.  Therefore,
\[
\sum_{\{u,v\}\subseteq V(G)-V(B)}\lambda_G(u,v)\leq \sum_{\{u,v\}\subseteq V(G)-V(B)}\lambda_{G'}(u,v).
\]

If $a,b\in V(B)$, then $\lambda_G(a,b)=2$, and if $a',b'\in\{\beta,z_1,\dots,z_{p-1}\}$, then $\lambda_{G'}(a',b')=2$ since $\deg_{G'}(z_i)=2$ for all $i\in\{1,\dots,p-1\}$. Therefore, the total connectivity in $G$ between all vertices in $V(B)$ is equal to the total connectivity in $G'$ between all vertices of $\{\beta,z_1,\dots,z_{p-1}\}$.

Finally, we have $\lambda_G(x,y)< \lambda_{G'}(x,y)$ (or $\lambda_G(x,y)< \lambda_{G'}(\beta,y)$ if $x\in B$).  This is due to the $p-1>0$ new $x$--$y$ paths through the vertices $z_i$, which are not counted above.
\end{proof}

The following corollary is nearly immediate.

\begin{corollary}\label{Simple}
Let $G$ be an edge-optimal minimally $2$-edge-connected graph of order $n\geq 5$.  Then $G$ is simple.
\end{corollary}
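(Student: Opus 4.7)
The plan is to derive a contradiction from the existence of any multi-edge by combining Lemma~\ref{Properties}(\ref{ParallelCut}) with Lemma~\ref{BlockContract}. Suppose, for contradiction, that $G$ is an edge-optimal minimally $2$-edge-connected graph of order $n\geq 5$ which is not simple. Since $G$ is minimally $2$-edge-connected, Lemma~\ref{Properties}(\ref{ParallelCut}) forbids triple edges, so any multi-edge consists of exactly two parallel edges between some pair of vertices $u$ and $v$. The same lemma tells us that removing these two parallel edges separates $u$ from $v$ in $G$.

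The key observation is that this separation property means every $u$--$v$ path in $G$ uses one of the two parallel edges directly, so $\lambda_G(u,v) = 2$ and, more importantly, $u$ and $v$ together with the two parallel edges between them form a block $B$ of $G$ of order $2$. Indeed, if any other vertex $w$ lay in the same block as $u$ and $v$, then there would exist a $u$--$v$ path through $w$ avoiding the two parallel edges, contradicting the separation property.

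Now apply Lemma~\ref{BlockContract}: since $G$ is edge-optimal of order $n\geq 5$, every block of $G$ must contain a pair of vertices of edge-connectivity at least $3$, i.e., $\overline{\lambda}(B) > 2$ for every block $B$. But the block $B$ consisting of $u$, $v$, and the two parallel edges between them has only one pair of vertices, namely $\{u,v\}$, with $\lambda_B(u,v) = 2$, so $\overline{\lambda}(B) = 2$. This contradicts Lemma~\ref{BlockContract}, completing the proof.

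There is essentially no technical obstacle here; the entire argument is a short deduction from the two cited results, and the only thing to be careful about is the justification that the two parallel edges together with their endpoints form a block, which follows immediately from the fact that removing the parallel edges disconnects $u$ from $v$.
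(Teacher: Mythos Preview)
Your proof is correct and follows essentially the same approach as the paper's: both invoke Lemma~\ref{Properties}\ref{ParallelCut} to conclude that the two parallel edges and their endpoints constitute a block with average edge-connectivity $2$, and then derive a contradiction from Lemma~\ref{BlockContract}. Your version simply spells out in more detail why $\{u,v\}$ with the two parallel edges is a block, which the paper leaves implicit.
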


\begin{proof}
Suppose, towards a contradiction, that $G$ has a pair of parallel edges $e_1$ and $e_2$ between vertices $u$ and $v$.  Then by Lemma~\ref{Properties}\ref{ParallelCut}, the vertices $u$ and $v$ make up a block of $G$ with average edge-connectivity $2$.  This contradicts Lemma~\ref{BlockContract}.
\end{proof}

So in the remainder of this section, we describe paths in edge-optimal minimally $2$-edge-connected graphs of order at least $5$ by listing only the vertices.  The next lemma describes a property of every cut vertex of an edge-optimal minimally $2$-edge-connected graph.

\begin{lemma}\label{CutConnectivity}
Let $G$ be an edge-optimal minimally $2$-edge-connected graph of order $n\geq 5$.  If $G$ has a cut vertex $v$, then every block of $G$ containing $v$, has some vertex $w\neq v$ such that $\lambda_G(v,w)\geq 3$.
\end{lemma}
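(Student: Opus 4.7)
The plan is a proof by contradiction: assume $v$ is a cut vertex of $G$ and $B$ is a block of $G$ containing $v$ with $\lambda_G(v,w)\leq 2$ for every $w\in V(B)\setminus\{v\}$. I will construct a minimally $2$-edge-connected graph $G'$ of order $n$ with $\overline{\lambda}(G') > \overline{\lambda}(G)$, contradicting edge-optimality.

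The first key observation is that every $v$--$w$ path in $G$ (with $w\in V(B)\setminus\{v\}$) must lie entirely in $B$, for it cannot leave $B$ and return without revisiting the cut vertex $v$.  Hence $\lambda_B(v,w)=\lambda_G(v,w)=2$ for all such $w$.  By Lemma~\ref{BlockContract}, $\overline{\lambda}(B)>2$, so there exist $x,y\in V(B)\setminus\{v\}$ with $\lambda_B(x,y)\geq 3$; by Lemma~\ref{Extensible}, $B$ is extensible between $x$ and $y$ through a new vertex $z$.

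I would first reduce to the case $\deg_B(v)=2$.  If instead $\deg_B(v)\geq 3$, then choosing a $2$-edge-cut in $B$ separating $v$ from some vertex whose $v$-side $S$ has minimum size and contracting $V(B)\setminus S$ to a single vertex $t$ leads to a contradiction: simplicity of $B$ (Corollary~\ref{Simple}) combined with $\deg_B(v)\geq 3$ rules out $|S|\leq 2$ via multi-edge arguments at $v$, and the minimality of $|S|$ forces $\lambda_{B'}(v,u)\geq 3$ for every $u\in S\setminus\{v\}$ in the contracted graph $B'$; three edge-disjoint $v$--$u$ paths in $B'$ lift to three edge-disjoint $v$--$u$ paths in $B$ (at most one of them uses $t$, and such a path corresponds to a genuine detour through $V(B)\setminus S$), directly contradicting the hypothesis.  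So $\deg_B(v)=2$; let $v_1,v_2$ be its two neighbors in $B$, and set $H:=G-(V(B)\setminus\{v\})$, so $G=B\cup H$ with $V(B)\cap V(H)=\{v\}$.

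With $\deg_B(v)=2$ in hand, construct $G'$ as follows: extend $B$ between $x$ and $y$ through the new vertex $z$ to obtain $B^z_{xy}$; smooth $v$ by deleting it and adding the (possibly multi-)edge $v_1v_2$, obtaining $B^*$ on $|V(B)|$ vertices; then glue $B^*$ to $H$ by identifying $v_1\in V(B^*)$ with $v\in V(H)$.  Lemma~\ref{Properties}\ref{GluingProperty} together with the standard fact that smoothing a degree-$2$ vertex preserves minimal $2$-edge-connectedness then shows that $G'$ is minimally $2$-edge-connected of order $n$.  To complete the contradiction I would compare $\Lambda(G)$ and $\Lambda(G')$ pair-by-pair under the bijection $V(G)\to V(G')$ sending $v\mapsto z$ and fixing the rest.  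Pairs within $V(B)\setminus\{v\}$ only gain, with a strict gain of at least $1$ for $(x,y)$ because of the new $x$--$z$--$y$ path; pairs within $V(H)\setminus\{v\}$ are unchanged; and the potentially lossy pairs $(v,h)$ for $h\in V(H)\setminus\{v\}$, on which $\lambda_{G'}(z,h)=2\leq \lambda_H(v,h)=\lambda_G(v,h)$, are exactly compensated by the complementary gain on the pairs $(v_1,h)$, on which $\lambda_{G'}(v_1,h)=\lambda_H(v,h)\geq 2=\lambda_G(v_1,h)$ (since $v_1$ is now the cut vertex attaching $H$).  The remaining cross pairs $(b,h)$ with $b\in V(B)\setminus\{v,v_1\}$ also satisfy $\lambda_{G'}\geq \lambda_G$.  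Summing, $\Lambda(G')\geq \Lambda(G)+1$, giving the desired contradiction with edge-optimality.  The main obstacle is carrying out this pair-by-pair bookkeeping cleanly and verifying that smoothing preserves the necessary structural properties of $B$ (in particular the behavior of edge-connectivity at the smoothing vertex and its neighbors, which mirrors the cancellation structure in the proof of Lemma~\ref{BlockContract}).
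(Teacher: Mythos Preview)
Your construction has a genuine gap: the claim that ``smoothing a degree-$2$ vertex preserves minimal $2$-edge-connectedness'' is false, and this breaks the argument.  Take $B$ to be the theta graph on $\{v,x,y,a,b\}$ with the three $x$--$y$ paths $xvy$, $xay$, $xby$; then $\deg_B(v)=2$, $v_1=x$, $v_2=y$, and $\lambda_B(x,y)=3$, so $x,y$ is a valid extensible pair.  Extending through $z$ gives $B^z_{xy}$ (a theta with four $x$--$y$ paths), and smoothing $v$ produces $B^\ast$ with edge set $\{xy,\,xa,\,ay,\,xb,\,by,\,xz,\,zy\}$.  But $B^\ast-xy$ is still a theta graph (three $x$--$y$ paths through $a$, $b$, $z$), hence $2$-edge-connected; so the smoothed edge $v_1v_2=xy$ is not essential, $B^\ast$ is not minimally $2$-edge-connected, and neither is your $G'$ (the edge $xy$ survives gluing and remains inessential).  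Thus you cannot invoke edge-optimality of $G$ against $G'$.  Dropping the edge $v_1v_2$ does not save the bookkeeping either: in the example, $\lambda_{B^\ast-xy}(x,y)=3=\lambda_B(x,y)$, so the promised strict gain on the pair $(x,y)$ disappears.  A secondary issue is that your reduction to $\deg_B(v)=2$ is not fully justified: when the $2$-edge-cut in the contracted graph $B'$ places the contracted vertex $t$ on the $v$-side, the corresponding cut in $B$ need not have a smaller $v$-side, so the minimality argument as written does not close.

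The paper's argument avoids both difficulties with a much simpler construction: rather than extending and smoothing, it takes the pieces $H_1',\dots,H_p'$ of $G$ at the cut vertex $v$ and re-glues them at better-chosen vertices $u_i$ (taking $u_i=v_i$ if the corresponding block already has a vertex with $\lambda_G(v,w_i)\ge 3$, and otherwise taking $u_i=x_i$ where $x_i,y_i$ is a high-connectivity pair inside $B_i$ supplied by Lemma~\ref{BlockContract}).  Since the $H_i'$ are themselves minimally $2$-edge-connected, Lemma~\ref{Properties}\ref{GluingProperty} gives minimal $2$-edge-connectedness of the result immediately, and the strict increase in $\overline\lambda$ follows from the re-gluing at $u_1=x_1\neq v_1$.
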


\begin{proof}
Let $v$ be a cut vertex of $G$, and let $H_1,\dots, H_p$ be the components of $G-v$. For every $i\in\{1,\dots,p\},$ let $H'_i$ be the subgraph of $G$ induced by $V(H_i)\cup\{v\}$.    By Lemma~\ref{Properties}, $H'_i$ is a minimally $2$-edge-connected graph.    Note also that there are exactly $p$ blocks of $G$ containing $v$; let $B_i$ be the block of $G$ containing $v$ that is a subgraph of $H'_i$.  Suppose, towards a contradiction, that $\lambda_G(v,w)=2$ for all $w\in V(B_i)$ for some $i$, $1 \le i \le p$. Without loss of generality $i=1$, i.e., $w\in V(B_1)$.

We now describe a construction of a graph $G'$ that is minimally $2$-connected with average connectivity exceeding that of $G$. Relabel the copy of $v$ in $H'_i$ with the label $v_i$. For every $i\in\{1,\dots,p\}$, if there is a vertex $w_i\in B_i$ such that $\lambda_{G}(v,w_i)\geq 3$, then define $u_i=v_i$.  Otherwise, by Lemma~\ref{BlockContract}, there is some pair of vertices in $B_i-v_i$, say $x_i$ and $y_i$, such that $\lambda_G(x_i,y_i)\geq 3$. In this case, define $u_i=x_i$ (whether $x_i$ or $y_i$ is chosen does not matter).  Since $\lambda_G(v,w)=2$ for all $w\in B_1$, we see that $u_1=x_1\neq v_1$.  Let $G'$ be the graph obtained from the disjoint union $\bigcup_{i=1}^k H'_i$ by identifying all vertices in the set $\{u_1,\dots,u_k\}$.  By Lemma~\ref{Properties}\ref{GluingProperty}, $G'$ is a minimally $2$-edge-connected graph of order $n$, and it is straightforward to verify that $\lamb(G')>\lamb(G)$, which contradicts the fact that $G$ is edge-optimal.
\end{proof}

We are now ready to prove that vertices of degree at least $3$ are independent in every edge-optimal minimally $2$-edge-connected graph of order $n\geq 5$.

\begin{theorem} \label{th:verticesdegreegreater2}
Let $G$ be an edge-optimal minimally $2$-edge-connected graph of order $n\geq 5$.    Then no two vertices of degree at least $3$ are adjacent in $G$.
\end{theorem}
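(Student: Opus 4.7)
The proof plan is to mirror the strategy of Theorem~\ref{bipartite} from the vertex-connectivity case, adapted to the edge setting. Suppose, for contradiction, that $G$ contains adjacent vertices $u$ and $v$ with $\deg_G(u),\deg_G(v)\geq 3$. By Lemma~\ref{Adjacent}, $\lambda_G(u,v)=2$, and by Lemma~\ref{Properties}(a), $G-uv$ admits a bridge $e^{*}=x^{*}y^{*}$ that separates $u$ from $v$; let $D_u$ and $D_v$ denote the components of $G-uv-e^{*}$ containing $u,x^{*}$ and $v,y^{*}$ respectively. The aim is to construct a minimally $2$-edge-connected graph $G'$ of order $n$ with $\overline{\lambda}(G')>\overline{\lambda}(G)$, contradicting edge-optimality.

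The proposed construction is the edge analog of the graph built in Theorem~\ref{bipartite}: let $H$ be obtained from $G$ by contracting $uv$ to a single vertex $w$, and set $G'=H+\{z,zw,zx^{*}\}$ for a new vertex $z$. Then $G'$ has order $n$ and $z$ has degree $2$. To verify minimal $2$-edge-connectedness of $G'$, one first notes that $H$ is $2$-edge-connected (contracting an edge preserves $2$-edge-connectedness) and that attaching $z$ via two edges preserves it, and then checks edge-by-edge, using Lemma~\ref{Properties}(a), that every edge $e$ of $G'$ satisfies the bridge property, by tracing how an essential bridge of $G-e$ descends to a bridge of $G'-e$ separating the endpoints of $e$ (with the edges $zw$ and $zx^{*}$ handled trivially since $\deg_{G'}(z)=2$).

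For the comparison $\overline{\lambda}(G)<\overline{\lambda}(G')$, I would decompose $\Lambda(G')-\Lambda(G)$ into four parts, following Fact~4 of the proof of Theorem~\ref{bipartite}: (i) $\lambda_G(u,v)=2=\lambda_{G'}(w,z)$; (ii) $\lambda_G(a,b)\leq\lambda_{G'}(a,b)$ for $a,b\in V(G)\setminus\{u,v\}$, by mapping a family of edge-disjoint $a$--$b$ paths in $G$ to a family in $G'$ (contracting $uv$ on any path using it); (iii) $\lambda_G(u,c)+\lambda_G(v,c)\leq\lambda_{G'}(w,c)+\lambda_{G'}(z,c)$ for $c\in V(G)\setminus\{u,v,x^{*}\}$, using that one of the two edge-disjoint paths between $v$ and $c$ must cross the cut defined by $e^{*}$; and crucially (iv) $\lambda_G(u,x^{*})+\lambda_G(v,x^{*})<\lambda_{G'}(w,x^{*})+\lambda_{G'}(z,x^{*})$, with strict inequality arising from the new $w$--$x^{*}$ path through $z$. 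Summing over all pairs yields $\Lambda(G)<\Lambda(G')$, the required contradiction.

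The principal obstacle is the absence of any direct edge-connectivity analog of Fact~1 from Theorem~\ref{bipartite}. In the vertex setting, Plummer's no-chord characterization (Theorem~\ref{plummer}) forces the cut vertex $x$ to be non-adjacent to both $u$ and $v$, which is what keeps the contraction of $uv$ simple and enables clean bridge tracking. In the edge setting no such guarantee is available, and if $u$ and $v$ share a common neighbour $a$ then $H=G/uv$ carries parallel edges $\{wa,wa\}$; by Lemma~\ref{Properties}(c) these can occur in a minimally $2$-edge-connected graph only when they form a $2$-edge-cut separating $w$ from $a$, which need not hold. The hardest step of the plan is therefore to first establish that $u$ and $v$ have no common neighbour in $G$. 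I expect this to follow from a separate exchange argument using edge-optimality together with Lemmas~\ref{BlockContract} and~\ref{CutConnectivity}: if such a common neighbour $a$ existed, the triangle on $u,v,a$ should admit a local modification (or, as an alternative, one extends $G$ through an extensible pair supplied by Lemma~\ref{ExtensiblePairExists} and then contracts an edge other than $uv$) producing a minimally $2$-edge-connected graph of the same order with strictly larger average edge-connectivity, contradicting edge-optimality. Once common neighbours are excluded, the bridge tracking and the edge-connectivity bookkeeping outlined above run in close parallel to the proof of Theorem~\ref{bipartite}.
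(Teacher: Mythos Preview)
Your plan diverges from the paper's in the choice of $G'$: the paper also contracts $uv$ to $w$, but instead of attaching a new vertex $z$ to $w$ and $x^{*}$, it \emph{subdivides the bridge} $x^{*}y^{*}$ with $z$. This difference is not cosmetic; your construction can fail to be minimally $2$-edge-connected for a reason you did not flag.

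The problem is not primarily parallel edges. Since $\{uv,e^{*}\}$ is a $2$-edge cut, any common neighbour of $u$ and $v$ is forced to be one of $x^{*},y^{*}$ with the other coinciding with $u$ or $v$; so that obstruction is narrow. The real gap is that your path $w\text{--}z\text{--}x^{*}$ can make the bridge $e^{*}=x^{*}y^{*}$ non-essential in $G'$. Concretely, take $G$ with $D_u$ a $4$-cycle $u,p_1,x^{*},p_2$ and $D_v$ a $4$-cycle $v,q_1,y^{*},q_2$, together with $uv$ and $e^{*}=x^{*}y^{*}$. This $G$ is minimally $2$-edge-connected with $\deg(u)=\deg(v)=3$. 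In your $G'$, the paths $x^{*}p_1w q_1y^{*}$ and $x^{*}p_2w q_2y^{*}$ are edge-disjoint and avoid $e^{*}$, so $G'-e^{*}$ has no bridge separating $x^{*}$ from $y^{*}$; hence $G'$ is not minimally $2$-edge-connected. Of course this particular $G$ is not edge-optimal, but nothing in your bridge-tracking outline invokes edge-optimality at this step, so as written the minimality verification does not go through. More generally, whenever $D_u$ carries two edge-disjoint $u$--$x^{*}$ paths and $D_v$ carries two edge-disjoint $v$--$y^{*}$ paths, your $G'$ fails minimality at $e^{*}$.

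The paper's subdivision sidesteps both issues simultaneously: the two new edges $x^{*}z$ and $zy^{*}$ are trivially essential since $\deg_{G'}(z)=2$, and no parallel edges are created at $w$ even in the boundary cases. The price is that the strict inequality can no longer be read off at the pair $(w,x^{*})$ as in your item (iv); instead the paper locates vertices $a\in D_u$ and $b\in D_v$ with $\lambda_{G'}(a,b)\ge 3>2=\lambda_G(a,b)$, and this is where Lemma~\ref{CutConnectivity} (ensuring $\deg_B(u),\deg_B(v)\ge 3$ inside the relevant block) is actually used.
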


\begin{proof}
Suppose otherwise that $u$ and $v$ are adjacent vertices of degree at least $3$ in $G$.  Since $G$ is minimally $2$-edge-connected, the graph $G-uv$ has a bridge, say $xy$.  So $G-\{uv,xy\}$ has exactly two connected components, say $G_1$ containing $u$ and $x$, and $G_2$ containing $v$ and $y$.  Let $G'$ be the graph obtained from $G$ by contracting $u$ and $v$ to a single vertex $w$, and subdividing the edge $xy$. Let $z$ denote the new vertex between $x$ and $y$.

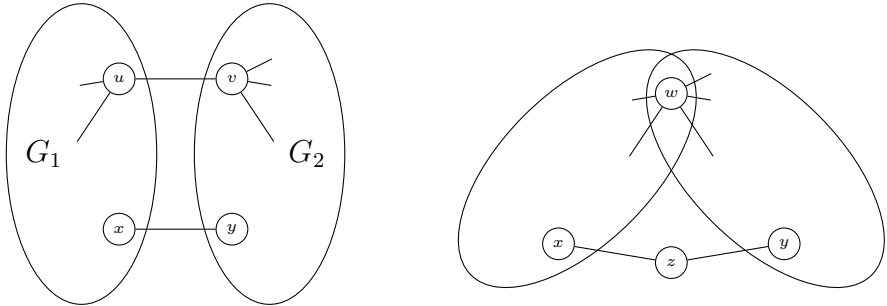
\begin{figure}[htb]
\centering{
\begin{tikzpicture}
\namedvertex (u) at (0,2.5) {\tiny $u$};
\namedvertex (v) at (1.5,2.5) {\tiny $v$};
\namedvertex (w) at (0,0.5) {\tiny $x$};
\namedvertex (z) at (1.5,0.5) {\tiny $y$};
\phantomvertex (un1) at (-0.6,2.4) {};
\phantomvertex (un2) at (-0.6,1.6) {};
\phantomvertex (vn1) at (2.1, 2.4) {};
\phantomvertex (vn2) at (2.1,1.6) {};
\phantomvertex (vn3) at (2.1, 2.8) {};
\path
(u) edge (un1)
(u) edge (un2)
(v) edge (vn1)
(v) edge (vn2)
(v) edge (vn3)
(u) edge (v)
(w) edge (z)
;
\draw (-0.5,1.5) ellipse (1 and 2);
\draw (2,1.5) ellipse (1 and 2);
\node at (-1,1.5) {$G_1$};
\node at (2.5,1.5) {$G_2$};
\end{tikzpicture}
\hspace{1cm}
\begin{tikzpicture}
\namedvertex (u) at (0.75,2.5) {\tiny $w$};
\namedvertex (w) at (-0.75,0.5) {\tiny $x$};
\namedvertex (z) at (2.25,0.5) {\tiny $y$};
\namedvertex (vp) at (0.75,0.25) {\tiny $z$};
\phantomvertex (un1) at (0.15,2.4) {};
\phantomvertex (un2) at (0.15,1.6) {};
\phantomvertex (vn1) at (1.35, 2.4) {};
\phantomvertex (vn2) at (1.35,1.6) {};
\phantomvertex (vn3) at (1.35, 2.8) {};
\path
(u) edge (un1)
(u) edge (un2)
(u) edge (vn1)
(u) edge (vn2)
(u) edge (vn3)
(w) edge (vp)
(vp) edge (z)
;
\draw[rotate around={-45:(-0.5,1.5)}] (-0.5,1.5) ellipse (1 and 2);
\draw[rotate around={45:(2,1.5)}] (2,1.5) ellipse (1 and 2);
\end{tikzpicture}
}
\caption{The graphs $G$ (left) and $G'$ (right).}\label{Degree3Picture}
\end{figure}

Note that $G$ and $G'$ have the same order.  We claim that $G'$ is minimally $2$-edge-connected, and that $\lamb(G')>\lamb(G)$, which contradicts the assumption that $G$ is edge-optimal.  First we show that $\lamb(G')>\lamb(G)$.  We demonstrate the following:
\begin{enumerate}[label=(\roman*)]
\item $\lambda_G(u,v)=\lambda_{G'}(w,z)$;
\item $\lambda_G(a,u)+\lambda_G(a,v)\leq \lambda_{G'}(a,w)+\lambda_{G'}(a,z)$ for all $a\in V(G)-\{u,v\}$;
\item $\lambda_G(a,b)\leq \lambda_{G'}(a,b)$ for all $a,b\in V(G)-\{u,v\}$; and
\item there exist vertices $a\in V(G_1)-u$ and $b\in V(G_2)-v$ such that $\lambda_{G}(a,b)< \lambda_{G'}(a,b)$.
\end{enumerate}
Summing the left-hand sides of (i), (ii), and (iii) over all possibilities gives the total connectivity of $G$, while summing the right-hand sides gives the total connectivity of $G'$, so together, (i)--(iv) give $\lamb(G)<\lamb(G')$.

By Lemma~\ref{Adjacent}, $\lambda_G(u,v)=2$.  Since $G_1$ and $G_2$ are connected, there is a $u$--$x$ path in $G_1$ and a $v$--$y$ path in $G_2$.  These paths give rise to two internally disjoint $w$--$z$ paths in $G'$ in the obvious manner, so $\lambda_{G'}(w,z)=2$ as well.  This completes the proof of (i).

For (ii), let $a\in V(G)-\{u,v\}$, and suppose without loss of generality that $a\in G_1$.  Then $\lambda_{G}(a,v)=2$ since the edges $uv$ and $xy$ separate $a$ and $v$.  Moreover, let $C$ be a cycle of $G$ formed from two edge disjoint $a$--$v$ paths.  Then $C$ gives rise to a cycle of $G'$ containing $a$ and $z$, so $\lambda_G'(a,z)\leq 2$ (in fact, $\lambda_{G'}(a,z)=2$ since $\deg_{G'}(z)=2$).  So $\lambda_G(a,v)=\lambda_{G'}(a,z)$.  Now let $\mathcal{P}_{a,u}$ be a collection of $\lambda_G(a,u)$ edge-disjoint $a$--$u$ paths in $G$.  At most one member of $\mathcal{P}_{a,b}$ contains the edge $uv$ (in which case it must also contain $xy$).  If such a path exists in $\mathcal{P}_{a,b}$, then performing the contraction of $uv$ to $w$ and the subdivision of $xy$ on this path and leaving all other paths in $\mathcal{P}_{a,b}$ as is, gives a collection of $\lambda_{G}(a,u)$ edge-disjoint $a$--$w$ paths in $G'$, so  $\lambda_{G}(a,u)\leq \lambda_{G'}(a,w)$ (in fact, equality is easily verified).

For (iii), let $a,b\in V(G)-\{u,v\}=V(G')-\{w,z\}$.  Let $\mathcal{P}_{a,b}$ be a collection of  $\lambda_G(a,b)$ edge-disjoint $a$--$b$ paths in $G$.  The edge $uv$ appears in at most one path in $\mathcal{P}_{a,b}$, and the edge $xy$ appears in at most one path in $\mathcal{P}_{a,b}$.  One obtains a collection of $\lambda_G(a,b)$ edge-disjoint $a$--$b$ paths in $G'$ from $\mathcal{P}_{a,b}$ by contracting such an appearance of $uv$ to $w$ and subdividing such an appearance of $xy$.

Finally, we prove (iv).  We find $a\in V(G_1)-u$ such that $\lambda_G(a,u)\geq 3$, and $b\in V(G_2)-v$ such that $\lambda_G(v,b)\geq 3$.  We then show that $\lambda_{G'}(a,b)\geq 3$.  Since  $a\in V(G_1)$ and $b\in V(G_2)$, we will have $\lambda_{G}(a,b)=2$, giving (iv).

Since $\{uv,wz\}$ is a cutset of $G$, it must be the case that $uv$ and $wz$ are contained in the same block $B$ of $G$.  We claim that $\deg_B(u)\geq 3$ and $\deg_B(v)\geq 3$.  If $u$ is not a cut vertex of $G$, then $\deg_B(u)=\deg_G(u)\geq 3$.  Otherwise, if $u$ is a cut vertex of $G$, then $\deg_B(u)\geq 3$, by Lemma~\ref{CutConnectivity}.  The proof is the same for $v$.  Now Since $B$ is $2$-connected, there is a cycle $C$ in $B$ containing both $uv$ and $wz$.  Let $u_1$ be the neighbour of $u$ on $C$ other than $v$, and let $u_2\neq u_1,v$ be another neighbour of $u$ in $B$.  In an analogous manner we find  corresponding neighbours $v_1$ and $v_2$ of $v$.  Let $B_i$ be the subgraph of $B$ induced by $V(G_i)\cup V(B)$ for $i=1,2$.
\begin{figure}[htb]
\centering{
\begin{tikzpicture}[scale=1.2]
\namedvertex (u) at (-0.3,3) {\tiny $u$};
\namedvertex (v) at (1.8,3) {\tiny $v$};
\namedvertex (w) at (0,0.5) {\tiny $x$};
\namedvertex (z) at (1.5,0.5) {\tiny $y$};
\namedvertex (un1) at (-0.8,2.5) {\tiny $u_1$};
\namedvertex (un2) at (-0.3,2.2) {\tiny $u_2$};
\namedvertex (vn1) at (2.3, 2.5) {\tiny $v_1$};
\namedvertex (vn2) at (1.8,2.2) {\tiny $v_2$};
\namedvertex (a) at (-0.8,1.2) {\tiny $a$};
\namedvertex (b) at (2.3,1.2) {\tiny $b$};
\path
(u) edge (v)
(w) edge (z)
(u) edge (un1)
(u) edge (un2)
(v) edge (vn1)
(v) edge (vn2)
;
\path[dashed]
(un1) edge [left] node {\footnotesize $P_1$} (a)
(un2) edge [right] node {\footnotesize $P_2$}(a)
(a) edge [below left] node {\footnotesize $P_3$}(w)
(vn1) edge [right] node {\footnotesize $Q_1$} (b)
(vn2) edge [left] node {\footnotesize $Q_2$} (b)
(b) edge [below right] node {\footnotesize $Q_3$} (z)
;
\draw (-0.5,1.5) ellipse (1 and 2);
\draw (2,1.5) ellipse (1 and 2);
\node at (-0.5,0) {$B_1$};
\node at (2,0) {$B_2$};
\def\CircArrowRight#1{\tikz[baseline=(A.base)]
  \draw[-stealth]
    (0,0) node[circle, inner sep=0cm](A){$#1$}
    let \p1=(A.center),\p2=(A.west), \n1={2*(\x1-\x2)} in
      (-90:\n1) arc (-90:190:\n1);}
\end{tikzpicture}
\hspace{1cm}
\begin{tikzpicture}[scale=1.2]
\namedvertex (u) at (0.75,2.5) {\tiny $w$};
\namedvertex (w) at (-0.75,0.5) {\tiny $x$};
\namedvertex (z) at (2.25,0.5) {\tiny $y$};
\namedvertex (vp) at (0.75,0.25) {\tiny $z$};
\namedvertex (un1) at (0,2.5) {\tiny $u_1$};
\namedvertex (un2) at (0.2,1.8) {\tiny $u_2$};
\namedvertex (vn1) at (1.5, 2.5) {\tiny $v_1$};
\namedvertex (vn2) at (1.3,1.8) {\tiny $v_2$};
\namedvertex (a) at (-1,1.7) {\tiny $a$};
\namedvertex (b) at (2.5,1.7) {\tiny $b$};
\path
(u) edge (un1)
(u) edge (un2)
(u) edge (vn1)
(u) edge (vn2)
(w) edge (vp)
(vp) edge (z)
;
\path[dashed]
(un1) edge [above] node {\footnotesize $P_1$} (a)
(un2) edge [below] node {\footnotesize $P_2$}(a)
(a) edge [left] node {\footnotesize $P_3$}(w)
(vn1) edge [above] node {\footnotesize $Q_1$} (b)
(vn2) edge [below] node {\footnotesize $Q_2$} (b)
(b) edge [right] node {\footnotesize $Q_3$} (z)
;
\draw[rotate around={-45:(-0.5,1.5)}] (-0.5,1.5) ellipse (1 and 2);
\draw[rotate around={45:(2,1.5)}] (2,1.5) ellipse (1 and 2);
\end{tikzpicture}
}
\caption{The structure of the block $B$ of $G$ (left) that gives rise to three edge-disjoint $a$--$b$ paths in $G'$ (right).  The dashed lines indicate edge-disjoint paths.}\label{ThreePaths}
\end{figure}

Let $a$ be a vertex on the cycle $C$ at minimum distance from $u_2$ in $B-u$ (note that $a\in V(G_1)$), and let $P_2$ be a shortest $a$--$u_2$ path in $G_1$.  Let $P_1$ be the $a$--$u_1$ path contained in $B_1$ described by $C$, and let $Q_1$ be the $v_1$--$b$ path contained in $B_2$ described by $C$.  Since $B$ is $2$-connected, the graph $B-u$ is connected.  Note that $P_2$ is edge-disjoint from $C$.  Similarly, one finds a vertex $b\in V(B_2)-v$ and a $v_2$--$b$ path $Q_2$ in $B_2$ that is edge-disjoint from $C$.  Finally, let $P_3$ be the $a$--$x$ path in $B_1$ described by $C$, and let $Q_3$ be the $y$--$b$ path in $B_2$ described by $C$.  We conclude that $P_1wQ_1$, $P_2wQ_2$, and $P_3zQ_3$ are edge-disjoint $a$--$b$ paths in $G'$ (see Figure~\ref{ThreePaths}).


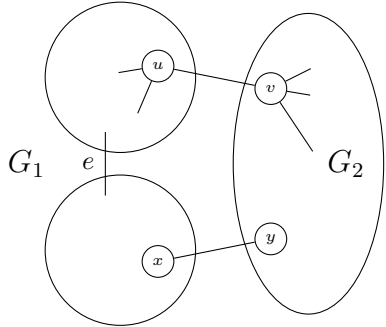
\begin{figure}[htb]
\centering{
\begin{tikzpicture}
\namedvertex (u) at (0,2.8) {\tiny $u$};
\namedvertex (v) at (1.5,2.5) {\tiny $v$};
\namedvertex (w) at (0,0.2) {\tiny $x$};
\namedvertex (z) at (1.5,0.5) {\tiny $y$};
\phantomvertex (un1) at (-0.6,2.7) {};
\phantomvertex (un2) at (-0.3,2.1) {};
\phantomvertex (vn1) at (2.1, 2.4) {};
\phantomvertex (vn2) at (2.1,1.6) {};
\phantomvertex (vn3) at (2.1, 2.8) {};
\phantomvertex (e1) at (-0.7,2) {};
\phantomvertex (e2) at (-0.7, 1) {};
\path
(u) edge (un1)
(u) edge (un2)
(v) edge (vn1)
(v) edge (vn2)
(v) edge (vn3)
(u) edge (v)
(w) edge (z)
(e1) edge node[left] {\footnotesize $e$} (e2)
;
\draw (-0.5,2.65) ellipse (1 and 1);
\draw (-0.5,0.35) ellipse (1 and 1);
\draw (2,1.5) ellipse (1 and 2);
\node at (-1.75,1.5) {$G_1$};
\node at (2.5,1.5) {$G_2$};
\end{tikzpicture}
}
\caption{The graph $G$ in the case that $uv$ is a bridge in $G-e$ (and so is $xy$).}\label{bequalsuv}
\end{figure}

It remains to show that $G'$ is minimally $2$-edge-connected.  When proving that $\lamb(G)<\lamb(G')$, we also established the fact that $G'$ is $2$-edge-connected.  Let $e$ be any edge in $G'$. We show that $G'-e$ has a bridge.  If $e=xz$, then $yz$ is a bridge in $G'-e$, and vice versa.  So we may assume that $e\in E(G')-\{xz,yz\}.$ So either $e \in E(G)-\{uv,xy\}$, or $e$ is incident with $w$, i.e., $e=wa$ where $ua$ or $va$ is in $E(G) -\{uv,xy\}$.
Suppose, without loss of generality, that $e\in E(G_1)$ or $e=wa$ where $ua \in E(G_1)$.  If $e \in E(G_1-u)$, then $G-e$ has a bridge $b$, since $G$ is minimally $2$-edge-connected.  If $b\not\in \{uv,xy\}$, then $b$ is also a bridge in $G'-e$.  If $b=xy$, then $xz$ is a bridge in $G'-e$.  Finally, if $b=uv$, then $xy$ is also a bridge in $G-e$ (see Figure~\ref{bequalsuv}), and hence $xz$ is a bridge in $G'-e$ once again.  If $e=wa$ and $a \in V(G_1)$, then $G-ua$ has a bridge $b$. We can argue as in the previous case that $G'-e$ has a bridge. Therefore, $G'$ is minimally $2$-edge-connected.
\end{proof}

As an immediate consequence of Theorem \ref{th:verticesdegree2} and Theorem \ref{th:verticesdegreegreater2} we have the following structure result for edge-optimal minimally $2$-edge-connected graphs.

\begin{corollary}\label{edge-optimal_are-bipartite}
Let $G$ be an edge-optimal minimally $2$-edge-connected graph of order $n\geq 5$.  Then $G$ is bipartite with partite sets the set of vertices of degree $2$ and the set of vertices of degree exceeding $2$.
\end{corollary}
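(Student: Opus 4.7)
The proof is flagged as an immediate consequence of the two preceding theorems, so the plan is just to assemble them cleanly. First I would observe that since $G$ is $2$-edge-connected, every vertex of $G$ has degree at least $2$. Consequently, $V(G)$ partitions as $V_2 \cup V_{\geq 3}$, where $V_2$ is the set of vertices of degree exactly $2$ and $V_{\geq 3}$ is the set of vertices of degree at least $3$. To show $G$ is bipartite with these as partite sets, it suffices to show that each of $V_2$ and $V_{\geq 3}$ is independent in $G$.

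The independence of $V_2$ is exactly the conclusion of Theorem \ref{th:verticesdegree2}, and the independence of $V_{\geq 3}$ is exactly the conclusion of Theorem \ref{th:verticesdegreegreater2}; both require the hypothesis $n \geq 5$, which is given. Since every edge of $G$ must therefore join a vertex of $V_2$ to a vertex of $V_{\geq 3}$, $G$ is bipartite with the claimed partite sets. There is no real obstacle here: the substantial work has already been done in proving the two structural theorems, and assembling them is a one-line argument.
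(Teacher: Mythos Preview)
Your proposal is correct and matches the paper's approach exactly: the paper states this corollary as an immediate consequence of Theorem~\ref{th:verticesdegree2} and Theorem~\ref{th:verticesdegreegreater2}, and your write-up simply makes explicit the trivial assembly of these two results.
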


We close this section with a proof that every edge-optimal minimally $2$-edge-connected graph of order $n\geq 5$ is $2$-connected.  The following observation will be useful in the proof.

\begin{observation}\label{permuting_paths_through_cut-vertex}
Let $G$ be a graph with cut vertex $x$ and let $u$ and $v$ be neighbours of $x$ from distinct components of $G-x$. Let $k=\lambda_G(u,v)$ and $P_1, P_2, \ldots P_k$ a family of $k$ edge-disjoint $u$--$v$ paths in $G$. Then $x$ necessarily lies on each $P_i$. Let $P_i'$ be the $u$--$x$ subpath of $P_i$ and $P_i''$ the $x$--$v$ subpath of $P_i$. If $\pi$ is any permutation of $1,2, \ldots, k$ then the collection of paths $\{P_i'\odot P_{\pi(i)}''| 1\le i \le k\}$ is also a set of $k$ edge-disjoint $u$--$v$ paths.
\end{observation}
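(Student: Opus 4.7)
The plan is to break the argument into three short steps: (1) justify that each $P_i$ passes through $x$ (so that the subpaths $P_i'$ and $P_i''$ are well-defined); (2) argue that for any permutation $\pi$ each concatenation $P_i' \odot P_{\pi(i)}''$ is indeed a $u$--$v$ path; and (3) verify pairwise edge-disjointness of the new family.

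For step (1), since $x$ is a cut vertex of $G$ and $u,v$ lie in distinct components of $G-x$, every $u$--$v$ walk in $G$, and in particular every $P_i$, must visit $x$. Taking the first occurrence of $x$ on $P_i$ yields the subpaths $P_i'$ (from $u$ to $x$) and $P_i''$ (from $x$ to $v$).

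For step (2), observe that every internal vertex of $P_i'$ lies in the component of $G-x$ containing $u$, while every internal vertex of $P_{\pi(i)}''$ lies in the component of $G-x$ containing $v$. These two vertex sets are disjoint, so $P_i'$ and $P_{\pi(i)}''$ share only the vertex $x$, and hence their concatenation $P_i' \odot P_{\pi(i)}''$ is a genuine $u$--$v$ path.

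For step (3), the main thing to check is that no edge is reused across the new family. Every edge of any $P_i'$ has both endpoints in $V(C_u)\cup\{x\}$, where $C_u$ is the component of $G-x$ containing $u$, while every edge of any $P_j''$ has both endpoints in $V(C_v)\cup\{x\}$, where $C_v$ is the component containing $v$. Since $C_u\cap C_v=\emptyset$, a common edge would need both endpoints equal to $x$, which is impossible. Thus it suffices to compare $u$-sides with $u$-sides and $v$-sides with $v$-sides: for $i\ne j$, the paths $P_i'$ and $P_j'$ are edge-disjoint (inherited from the original family), and $P_{\pi(i)}''$ and $P_{\pi(j)}''$ are edge-disjoint because $\pi(i)\ne\pi(j)$. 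This finishes the verification. I do not anticipate any real obstacle; the whole argument rests on the elementary fact that $x$ separates $G$ into two edge-disjoint pieces, which makes the $u$-halves and $v$-halves freely recombinable.
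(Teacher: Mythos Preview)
Your proof is correct. The paper does not give a proof of this observation at all; it is stated as self-evident and used directly. Your three-step argument is exactly the natural justification the authors have in mind, and the only minor redundancy is the phrase ``first occurrence of $x$'': since each $P_i$ is a path, $x$ appears on it exactly once.
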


\begin{theorem} \label{th:G2connected}
Let $G$ be an edge-optimal minimally $2$-edge-connected graph of order $n\geq 5$.  Then $G$ is $2$-connected.
\end{theorem}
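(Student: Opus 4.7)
The plan is to suppose for contradiction that $G$ is edge-optimal of order $n\ge 5$ with a cut vertex $v$, and construct a minimally $2$-edge-connected graph $G'$ of order $n$ with $\lamb(G')>\lamb(G)$. First, since $G$ has no bridges every block of $G$ is $2$-connected; hence a degree-$2$ vertex has both its edges in a single block and is never a cut vertex. Thus $\deg(v)\ge 3$, and by Corollary~\ref{edge-optimal_are-bipartite} every neighbor of $v$ has degree $2$. Since $v$ is a cut vertex I can pick two distinct blocks $B_1,B_2$ of $G$ at $v$, and decompose $G$ as the graph obtained by identifying the copies of $v$ in $H_1:=B_1$ and $H_2:=G-(V(B_1)\setminus\{v\})$.

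I would build $G'$ by two balanced modifications, one on each side of $v$. On the $H_1$ side, let $u$ be any neighbor of $v$ in $B_1$ and $u'$ the other neighbor of $u$. Since both $v$ and $u'$ have degree at least $3$, Theorem~\ref{th:verticesdegreegreater2} gives $v\not\sim u'$ in $G$, so ``suppressing'' $u$ (deleting it and adding the edge $vu'$) yields a simple graph $H_1'$ that is the degree-$2$ smoothing of $H_1$; subdividing $vu'$ in $H_1'$ recovers $H_1$, so $H_1'$ is minimally $2$-edge-connected and $\lambda_{H_1'}(x,y)=\lambda_{H_1}(x,y)$ for all $x,y\in V(H_1')$. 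On the $H_2$ side, Lemma~\ref{CutConnectivity} supplies a vertex $w_2\in V(B_2)\setminus\{v\}$ with $\lambda_G(v,w_2)\ge 3$; since $v,w_2\in B_2$ this forces $\lambda_{B_2}(v,w_2)\ge 3$, so Lemma~\ref{Extensible} lets me extend $B_2$ between $v$ and $w_2$ through a new vertex $z$. Letting $H_2'$ be $H_2$ with $B_2$ replaced by this extension, Lemma~\ref{Properties}(d) gives that $H_2'$, and then the graph $G'$ obtained by gluing $H_1'$ and $H_2'$ at $v$, is minimally $2$-edge-connected; a direct count gives $|V(G')|=(|V(H_1)|-1)+(|V(H_2)|+1)-1=n$.

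The final step is to check $\Lambda(G')>\Lambda(G)$. The vertices $u$ in $G$ and $z$ in $G'$ both have degree $2$, each contributing $2(n-1)$ to its total, so these contributions cancel. For the remaining pairs, drawn from the common vertex set $V(G)\setminus\{u\}=V(G')\setminus\{z\}$, I would show $\lambda_{G'}(x,y)\ge\lambda_G(x,y)$. Pairs inside $V(H_1')$ are handled by the topological equivalence above; pairs inside $V(H_2)$ are handled by the inclusion $H_2\subseteq H_2'$, and the pair $(v,w_2)$ strictly increases by $1$ via the new path $vzw_2$; for cross pairs Observation~\ref{permuting_paths_through_cut-vertex} yields $\lambda(x,y)=\min\{\lambda(x,v),\lambda(v,y)\}$ in both graphs, and this minimum can only grow since $\lambda_{H_2'}(v,y)\ge\lambda_{H_2}(v,y)$. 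Summing gives $\Lambda(G')\ge\Lambda(G)+1$, contradicting edge-optimality.

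The main obstacle is confirming that both modifications preserve minimal $2$-edge-connectedness while keeping the order fixed: the suppression step requires $v\not\sim u'$ (delivered precisely by Theorem~\ref{th:verticesdegreegreater2} so that $H_1'$ is simple), while the extension step requires the edge-connectivity hypothesis of Lemma~\ref{Extensible} (delivered precisely by Lemma~\ref{CutConnectivity} inside a single block). Once these hypotheses are verified and Lemma~\ref{Properties}(d) glues everything together, the per-pair comparison of $\lambda_G$ and $\lambda_{G'}$ is a routine bookkeeping exercise.
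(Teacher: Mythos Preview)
Your approach differs from the paper's --- you suppress a degree-$2$ vertex on one side of the cut vertex and extend on the other --- but there is a genuine gap in the suppression step. You claim that since subdividing $vu'$ in $H_1'$ recovers $H_1$, the graph $H_1'$ is minimally $2$-edge-connected. This inference is false: suppression of a degree-$2$ vertex preserves $2$-edge-connectedness but \emph{not} minimality in general. Concretely, $H_1'$ is minimally $2$-edge-connected if and only if $\lambda_{H_1'}(v,u')=2$, equivalently $\lambda_{B_1}(v,u')=2$; but nothing in your argument rules out $\lambda_{B_1}(v,u')\ge 3$. For instance, if $B_1\cong K_{2,k}$ with $k\ge 3$ and $v$ is one of the two high-degree vertices, then every choice of $u$ forces $u'$ to be the other high-degree vertex, and $\lambda_{B_1}(v,u')=k\ge 3$; suppressing $u$ yields $K_{2,k-1}$ together with the edge $vu'$, which is not minimally $2$-edge-connected. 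Your check that $v\not\sim u'$ (via Theorem~\ref{th:verticesdegreegreater2}) only guarantees that $H_1'$ is simple; it does not deliver minimality, so $G'$ need not be minimally $2$-edge-connected and the contradiction fails.

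The paper avoids this difficulty with a different construction: it picks degree-$2$ neighbours $u$ and $v$ of the cut vertex $x$ on opposite sides, with other neighbours $y$ and $z$ respectively, and replaces the edges $ux,vx$ by the ``cross'' edges $uz,vy$. This keeps the vertex set and all degrees unchanged, so every edge of $G'$ is still incident to a degree-$2$ vertex and minimal $2$-edge-connectedness follows immediately from Lemma~\ref{Adjacent}. The strict gain appears at the pair $(y,z)$. If you wish to salvage your construction, you would need either to show that \emph{some} neighbour $u$ of $v$ in $B_1$ satisfies $\lambda_{B_1}(v,u')=2$ (which is not obvious and fails in the $K_{2,k}$ example above), or to replace the suppression step by an operation that cannot introduce an edge between a pair of high local edge-connectivity.
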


\begin{proof}
Assume, to the contrary, that $G$ is not $2$-connected.  Let $x$ be a cut vertex of $G$, let $C_1$ be a component of $G-x$, and let $C_2$ be the union of the remaining components of $G-x$. Let $G_i=G[V(C_i) \cup \{ x \}]$, for $i=1, 2$. Observe that  $G_1$ and $G_2$ are both minimally $2$-edge-connected, since $G$ is minimally $2$-edge-connected.  By Lemma~\ref{CutConnectivity}, we may assume that  $x$ has degree at least $3$ in every block that contains it. Thus $\deg_{G_i}(x) \ge 3$ for $i=1,2$.

Let $u$ be a neighbour of $x$ in $G_1$ and let $v$ be a neighbour of $x$ in $G_2$. By Theorem~\ref{th:verticesdegreegreater2}, $\deg_{G}(u) = 2=\deg_{G}(v)$. So $\deg_{G_1}(u) = 2$ and $\deg_{G_2}(v) = 2$.  Let $y=N_{G_1}(u)-\{ x \}$ and $z=N_{G_2}(v)-\{ x \}$. Let $G'$ be obtained from $G$ by removing the edges $ux$ and $vx$  and adding the two new edges $uz$ and $vy$ (see Figure~\ref{constructing_G'}). Note that $G'$ is a simple graph of the same order as $G$.
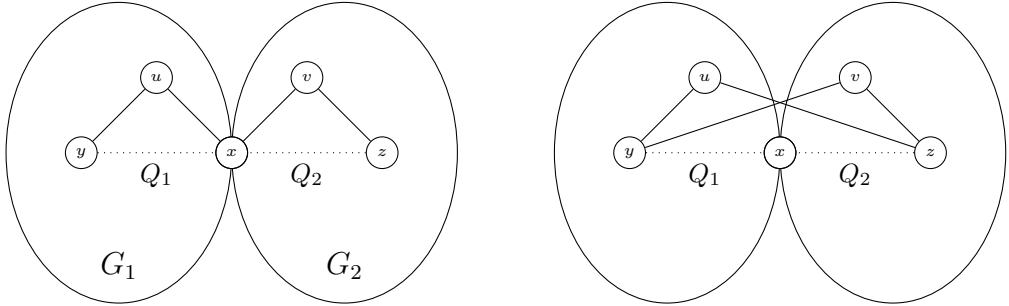
\begin{figure}[htb]
\begin{center}
\begin{tikzpicture}
\draw (-1.5,0) ellipse (1.5 and 2);
\draw (1.5,0) ellipse (1.5 and 2);
\filldraw[fill=white] (0,0) circle (6pt);
\namedvertex (u) at (-2,0) {\tiny $y$};
\namedvertex (u1) at (-1,1) {\tiny $u$};
\namedvertex (x) at (0,0) {\tiny $x$};
\namedvertex (v1) at (1,1) {\tiny $v$};
\namedvertex (v) at (2,0) {\tiny $z$};
\path
(u) edge (u1)
(u1) edge (x)
(x) edge (v1)
(v1) edge (v)
;
\path[dotted]
(u) edge node[below]{\footnotesize $Q_1$} (x)
(v) edge node[below]{\footnotesize $Q_2$} (x)
;
\node at (-1.5,-1.5) {$G_1$};
\node at (1.5,-1.5) {$G_2$};
\end{tikzpicture}
\hspace{1cm}
\begin{tikzpicture}
\draw (-1.5,0) ellipse (1.5 and 2);
\draw (1.5,0) ellipse (1.5 and 2);
\filldraw[fill=white] (0,0) circle (6pt);
\namedvertex (u) at (-2,0) {\tiny $y$};
\namedvertex (u1) at (-1,1) {\tiny $u$};
\namedvertex (x) at (0,0) {\tiny $x$};
\namedvertex (v1) at (1,1) {\tiny $v$};
\namedvertex (v) at (2,0) {\tiny $z$};
\path
(u) edge (u1)
(u1) edge (v)
(u) edge (v1)
(v1) edge (v)
;
\path[dotted]
(u) edge node[below]{\footnotesize $Q_1$} (x)
(v) edge node[below]{\footnotesize $Q_2$} (x)
;
\end{tikzpicture}
\caption{The graphs $G$ (left) and $G'$ (right).  The dotted lines indicate paths.}
\label{constructing_G'}
\end{center}
\end{figure}

We will show that $\lamb(G)<\lamb(G')$, and that $G'$ is minimally $2$-edge-connected, which contradicts the assumption that $G$ is edge-optimal.  We first show that $\lamb(G)<\lamb(G')$ by demonstrating that $\lambda_G(a,b)\leq \lambda_{G'}(a,b)$ for all $a,b\in V(G)$, and that the inequality is strict for at least one pair.  We break the argument into the following cases:

\begin{enumerate}[label=(\roman*)]
\item $\lambda_{G}(y,z)<\lambda_{G'}(y,z)$,
\item $\lambda_{G}(u,v)=\lambda_{G'}(u,v)$,
\item $\lambda_{G}(a,u)=\lambda_{G'}(a,u)$ for all $a\in V(G)-\{u,v\}$,
\item $\lambda_{G}(a,v)=\lambda_{G'}(a,v)$ for all $a\in V(G)-\{u,v\}$,
\item $\lambda_{G}(a,b)\leq \lambda_{G'}(a,b)$ for all $a, b \in V(G)-\{u,v\}$.
\end{enumerate}

Throughout, we use the observation that there is an $x$--$y$ path $Q_1$ contained in $G_1-u$ (see Figure~\ref{constructing_G'}).  To see this, note that $G_1$ is $2$-edge-connected, so the removal of edge $xu$ does not separate $x$ and $y$.  So there is an $x$--$y$ path in $G_1$ that does not contain the edge $xu$, and in fact,  since $\deg_G(u)=2$, this path cannot contain $u$.  Similarly, there is an $x$--$z$ path $Q_2$ contained in $G_2-v$.

For (i), since $G$ is $2$-edge-connected, there exist at least two edge-disjoint $y$--$z$ paths in $G$. By Observation \ref{permuting_paths_through_cut-vertex}, we may assume that $P:yuxvz$ is a $y$--$z$ path in some family $\mathcal{F}$ of $\lambda_G(y,z)$ edge-disjoint $y$--$z$ paths in $G$. Let $\mathcal{F}'=(\mathcal{F}-\{P\}) \cup \{yuz, yvz\}$. Then $\mathcal{F}'$ is a family of $\lambda_G(y,z)+1$ edge-disjoint $y$--$z$ paths in $G'$.  So (i) holds.

For (ii), since $\deg_{G}(u)=\deg_G(v)=2$, we have $\lambda_G(u,v)=2$.  In $G'$, we have edge-disjoint $u$--$v$ paths $uyv$ and $uzv$, so $\lambda_{G'}(u,v)=2$, and we are done.

For (iii), let $a \in V(G)-\{u,v\}$.  Suppose first that $a \in V(G_1)-\{u\}$.  Since $G_1$ is (minimally) $2$-edge-connected, there exist two edge-disjoint $a$--$u$ paths $P_1$ and $P_2$ in $G_1$. We may assume that $y$ is the penultimate vertex of $P_1$ and $x$ is the penultimate vertex of $P_2$. Let $P_1'=P_1$ and let $P_2'=P_2[a,x]\odot Q_2\odot zu$. Then $P_1'$ and $P_2'$ are edge-disjoint $a$--$u$ paths in $G'$. So $\lambda_{G'}(a,u) =2$.  Suppose now that $a\in V(G_2)-\{v\}$.  There exist two edge-disjoint $a$--$v$ paths in $G_2$, and we may assume that $x$ is the penultimate vertex of $P_1$ and $z$ is the penultimate vertex of $P_2$.  Let $P'_1=P_1[a,x]\odot Q_1\odot yu$ and $P'_2=P_2[a,z]\odot zu$.  Then $P'_1$ and $P'_2$ are edge-disjoint $a$--$u$ paths in $G'$, so $\lambda_{G'}(a,u)=2$.  This completes the proof of (iii), and (iv) follows by symmetry.

For (v), let $a,b \in V(G)-\{u,v\}$.  Suppose first that we have $a,b\in V(G_1)-\{u\}$ (the case $a,b\in V(G_2)-\{v\}$ is similar).  Let $\mathcal{F}$ be a family of $\lambda_G(a,b)$ edge-disjoint paths in $G$. Since $a$ and $b$ are both in $G_1$ and $x$ is a cut vertex, these paths are contained in $G_1$. If none of these paths contain $u$, then $\mathcal{F}$ is a family of edge-disjoint $a$--$b$ paths of $G'$ and we are done. So suppose that some path $P \in \mathcal{F}$ contains $u$. Then $P$ is the only path of $G'$ that contains $u$, and $P$ necessarily contains both $y$ and $x$. Let $P'=P[a,y]\odot yuz \odot \overleftarrow{Q_2}\odot P[x,b]$, and note that $P'$ is an $a$--$b$ path that is edge-disjoint from every path in $\mathcal{F}-\{P\}$. So $(\mathcal{F} -\{P\}) \cup \{P'\}$ is a family of $\lambda_G(a,b)$ edge-disjoint $a$--$b$ paths in $G'$, and we conclude that $\lambda_G(a,b)\leq \lambda_{G'}(a,b)$.

To complete the proof of (v), we need to consider the case where $a \in V(G_1)-\{u\}$ and $b \in V(G_2)-\{v\}$. Let $\mathcal{F}$ be a family of $\lambda_G(a,b)$ edge-disjoint $a$--$b$ paths in $G$. If none of these paths contain $u$ or $v$, then $\mathcal{F}$ is a family of $\lambda_{G}(a,b)$ edge-disjoint $a$--$b$ paths in $G'$ and hence $\lambda_{G}(a,b)\leq \lambda_{G'}(a,b)$. Suppose now that $u$ or $v$ appears on some path of $\mathcal{F}$. If $u$ and $v$ both appear in $\mathcal{F}$, then, by Observation \ref{permuting_paths_through_cut-vertex}, we see that $\mathcal{F}$ and $P\in\mathcal{F}$ can be chosen in such a way that $P$ contains both $u$ and $v$.  In this case, let $P'=P[a,y]\odot yuz\odot P[z,b]$. So $(\mathcal{F}-P)\cup \{P'\}$ is a family of $\lambda_G(a,b)$ edge-disjoint $a$--$b$ paths in $G'$.  We assume now that $u$ (but not $v$) lies on $P$. The case where $v$ (but not $u$) lies on $P$ can be argued similarly. Let $R$ be a shortest path in $G_2-v$ from $z$ to a vertex on a path of $\mathcal{F}$, say $R$ is a $z$--$c$ path (note that possibly $c=z$).  So $c$ lies on some path of $\mathcal{F}$.  From the observation made prior to the theorem, we can choose $\mathcal{F}$ and $P\in \mathcal{F}$ in such a way that $P$ contains both $u$ and $c$. Let $P'=P[a,y]\odot yuz\odot R\odot P[c,b]$.  Then  $(\mathcal{F}-\{P\}) \cup \{P'\}$ is a family of $\lambda_G(a,b)$ edge-disjoint $a$--$b$ paths in $G'$. So $\lambda_{G}(a,b)\leq \lambda_{G'}(a,b)$, as desired.

It remains to show that $G'$ is minimally $2$-edge-connected.  The fact that $G'$ is $2$-edge-connected follows immediately from our work above, as $\lambda_{G'}(a,b)\geq \lambda_G(a,b)\geq 2$ for all $a,b\in V(G')$.  By Theorem~\ref{th:verticesdegreegreater2}, every edge of $G$, and hence every edge of $G'$, is incident to a vertex of degree $2$.  It follows directly that $G'$ is minimally $2$-edge-connected.
\end{proof}

We conclude this section by noting that, given a minimally $2$-edge-connected graph $G$ of order $n\geq 5$, for which either the vertices of degree $2$ or the vertices of degree exceeding $2$ are not independent or the graph is not $2$-connected, the proofs of this section implicitly describe an algorithm for constructing a minimally $2$-edge-connected graph $G'$ of the same order $n$ with higher average edge-connectivity than $G$. By repeated application of this algorithm we obtain a $2$-connected minimally $2$-edge-connected graph of order $n$ in which the vertices of degree $2$ and those of degree exceeding $2$ are independent. Moreover, the average edge-connectivity of this graph exceeds that of the other graphs that preceded it in the process.

\subsection{An upper bound on the average edge-connectivity of minimally 2-edge-connected graphs}\label{Bound}
The structural properties proven in Section~\ref{Structure} lead us to a tight upper bound on the average edge-connectivity of a minimally $2$-edge-connected graph.  Both the statement and the proof of this bound are very similar to those of Theorem \ref{KappaBarBound}.
The proof of the edge-analogue of Theorem \ref{KappaBarBound} uses the following two results of Hakimi \cite{Hakimi1962}, and Dankelmann and Oellermann \cite{DankelmannOellermann2005}.

\begin{theorem}[\cite{Hakimi1962}]\label{degree_seq_multigraph}
A sequence $d_1 \ge d_2 \ge \ldots \ge d_n$ of non-negative integers is multigraphical if and only if $\sum_{i=1}^nd_i$ is even and $d_1 \le \sum_{i=2}^nd_i$.
\end{theorem}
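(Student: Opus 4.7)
The plan is to prove the two implications separately, with the forward direction being essentially immediate and the backward direction handled by induction on $D := \sum_{i=1}^n d_i$.

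For the forward direction (necessity), suppose that the sequence is realized by a multigraph $G$ with vertices $v_1, \ldots, v_n$, where $\deg(v_i) = d_i$. Then $\sum_{i=1}^n d_i = 2|E(G)|$ is even by the handshaking lemma. Moreover, each edge incident with $v_1$ contributes $1$ to $d_1$ and $1$ to some $d_j$ with $j \ge 2$, so $d_1 \le \sum_{j \ge 2} d_j$.

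For the backward direction (sufficiency), I would induct on $D$. The base case $D = 0$ forces all $d_i = 0$ and is realized by the edgeless graph. For the inductive step, if $d_1 = 0$ then all $d_i = 0$ and we are again done; otherwise $d_1 \ge 1$, and the hypothesis $d_1 \le \sum_{j \ge 2} d_j$ together with the sortedness $d_2 \ge d_3 \ge \cdots$ forces $d_2 \ge 1$. Consider the sequence obtained by decrementing both $d_1$ and $d_2$ by $1$ and re-sorting; call the result $e_1 \ge e_2 \ge \cdots \ge e_n$. Then $\sum e_i = D - 2$ is still even, and assuming the Hakimi condition is preserved I would apply induction to realize this sequence by a multigraph $G'$ on vertices $w_1, \ldots, w_n$ with $\deg(w_i) = e_i$. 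Adding one edge between the two vertices whose degrees were originally decremented then yields a multigraph realizing the original sequence.

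The only non-trivial step is verifying that the reduced sequence still satisfies $e_1 \le \sum_{j \ge 2} e_j$, and this is where a small case analysis appears and is the main (if modest) obstacle. If $d_1 > d_3$, then $e_1 = d_1 - 1$, and the required inequality $d_1 - 1 \le (D - 2) - (d_1 - 1)$ reduces to the original hypothesis $d_1 \le D - d_1$. If instead $d_1 = d_3$ (so $d_1 = d_2 = d_3$), then $e_1 = d_1$ and the desired inequality becomes $d_1 + 2 \le 2d_1 + \sum_{i \ge 4} d_i$, i.e., $d_1 + \sum_{i \ge 4} d_i \ge 2$; this holds trivially if $d_1 \ge 2$, and when $d_1 = 1$ the evenness of $D = 3 + \sum_{i \ge 4} d_i$ forces $\sum_{i \ge 4} d_i$ to be odd and hence at least $1$. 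Once these two cases are handled, the induction closes and the construction is complete.
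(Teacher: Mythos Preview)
The paper does not supply its own proof of this statement: it is quoted as a known result of Hakimi and used as an input to Section~\ref{Bound}, so there is no in-paper argument to compare against. Your proof is correct and is essentially the classical inductive argument for Hakimi's characterization (implicitly taking ``multigraph'' to exclude loops, which is the intended reading here and is what your necessity argument uses when it asserts that every edge at $v_1$ also contributes to some $d_j$ with $j\ge 2$). The case split on whether $d_1>d_3$ or $d_1=d_2=d_3$ to verify that the decremented sequence still satisfies the dominance condition is handled cleanly, including the parity trick in the $d_1=1$ sub-case.
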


\begin{theorem}[\cite{DankelmannOellermann2005}] \label{edge-optimal_degree_seq}
Let $D:d_1 \ge d_2 \ge \ldots \ge d_n$, $n \ge 3$, be a multigraphical sequence with $d_n > 0$ and let $n_1$ denote the number of terms in $D$ that equal $1$.  Then there is an ideally edge-connected multigraph with degree sequence $D$ if and only if
\begin{enumerate}
\item $n_1 \le d_1-d_2$ or
\item $D:n-1,1,1, \ldots, 1$ where $D$ contains $n-1$ terms equal to $1$.
\end{enumerate}
\end{theorem}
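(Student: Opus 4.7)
I would prove the two directions of the equivalence separately; necessity carries almost all of the content.

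\emph{Necessity.} Suppose $G$ is ideally edge-connected with degree sequence $D$ and $n_1 \ge 1$. The key observation is that a pendant vertex $u$ with unique neighbour $x$ cannot lie on any $a$--$b$ path for $a,b \ne u$, so the edge $ux$ is useless in any edge-disjoint $a$--$b$ path system. From this I would extract two structural facts: (i) all pendants share a common neighbour $v^\star$, because pendants at distinct vertices $x_1, x_2$ of degree $\ge 2$ would give $\lambda(x_1,x_2) \le \deg(x_1) - 1 < \min\{\deg x_1, \deg x_2\}$; and (ii) $v^\star$ is the unique maximum-degree vertex, since otherwise any $y \ne v^\star$ with $\deg y = \deg v^\star$ would satisfy $\lambda(v^\star,y) \le \deg(v^\star) - n_1 < \deg v^\star$, contradicting ideal edge-connectivity. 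Consequently $v^\star = v_1$, $d_1 > d_2$, and all $n_1$ pendants attach to $v_1$. If $d_2 \ge 2$, then $\lambda(v_1,v_2) = d_2$ requires $d_2$ useful edges at $v_1$, of which there are only $d_1 - n_1$ after accounting for the pendants, yielding condition~(a). If $d_2 = 1$, then every non-$v_1$ vertex is a pendant attached to $v_1$, forcing $G = K_{1,n-1}$ and condition~(b).

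\emph{Sufficiency.} Condition~(b) is realized by the star $K_{1,n-1}$ directly, since $\lambda(u,v) = 1 = \min\{\deg u, \deg v\}$ for every pair. For condition~(a), I would construct $G$ in two stages. First, attach $n_1$ pendants to $v_1$; since pendants cannot lie on any path between non-pendant vertices, this reduces ideal edge-connectivity of $G$ to ideal edge-connectivity of a multigraph on the remaining $n-n_1$ vertices realizing the reduced sequence $D' : d_1 - n_1, d_2, \ldots, d_{n-n_1}$, whose entries are all $\ge 2$ and whose maximum $d_1 - n_1 \ge d_2$. Second, realize $D'$ ideally by starting from a Hamilton cycle on those vertices (guaranteeing $\lambda \ge 2$ for every pair) and then placing the remaining required edges as multi-edges at $v_1$, which is feasible precisely because of the slack $d_1 - n_1 - d_2 \ge 0$.

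\emph{Main obstacle.} The technical heart lies in stage two: verifying that $\lambda(v_i, v_j) = \min\{\deg v_i, \deg v_j\}$ holds \emph{exactly} for every pair, rather than merely as the lower bound $\ge 2$ supplied by the Hamilton cycle. I would approach this inductively on the total excess $\sum (d'_i - 2)$, peeling off one pair of extra edge-endpoints at a time while appealing to Hakimi's criterion (Theorem~\ref{degree_seq_multigraph}) to keep the residual sequence multigraphical, all the while maintaining the invariant that $v_1$ remains a ``connectivity hub'' through which any required extra path can be routed. The tightest subcase, where $d_1 - n_1 = d_2$, leaves no spare capacity at $v_1$ beyond what is needed to meet $v_2$'s target and is the one demanding the most careful bookkeeping.
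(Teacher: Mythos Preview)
The paper does not prove this statement; it is quoted from \cite{DankelmannOellermann2005} and invoked only through Corollary~\ref{edge-optimal}, so there is no in-paper argument to compare your plan against.

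On its own merits, your necessity direction is correct and cleanly organized: the observation that a pendant edge lies on no $a$--$b$ path with $a,b\neq u$ forces all pendants onto a single vertex of strictly maximal degree, and then $\lambda(v_1,v_2)\le d_1-n_1$ yields $n_1\le d_1-d_2$ immediately.

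The sufficiency direction has a genuine gap. The construction you describe first---Hamilton cycle plus ``the remaining required edges as multi-edges at $v_1$''---cannot even realize the degree sequence in general: routing every leftover edge-end through $v_1$ would force $d_1-n_1-2=\sum_{i\ge 2}(d_i-2)$, which already fails for $D'=(4,4,4,4)$. Your inductive fallback (peel off two units of excess, then add back one edge) is the right shape, but the step you yourself flag as delicate is where it breaks without further input: after adjoining an edge $v_jv_k$ to an ideally edge-connected realization $H$ of the reduced sequence, you must produce one \emph{additional} edge-disjoint $v_j$--$b$ path for every $b$ with $\deg_H(b)>\deg_H(v_j)$, and the new edge alone does not supply this unless you also control how the existing path systems in $H$ use the edges at $v_k$. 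Making ``$v_1$ is a connectivity hub'' precise enough to carry that control---and then verifying that it is preserved under the edge-addition---is exactly the missing ingredient; as written, the plan names the difficulty but does not resolve it.
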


In particular, we have the following.

\begin{corollary} \label{edge-optimal}
Let $m$ and $n$ be integers such that $m \ge n \ge 3$. Then there is an ideally edge-connected nearly regular (multi)graph of order $n$ and size $m$.
\end{corollary}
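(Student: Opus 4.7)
The plan is to exhibit the desired graph by first writing down the nearly regular degree sequence of length $n$ whose sum is $2m$, and then invoking Theorem~\ref{edge-optimal_degree_seq} to realize it as an ideally edge-connected multigraph. Concretely, write $2m = dn + r$ with $d, r \in \mathbb{Z}$ and $0 \le r < n$, and consider the nearly regular sequence
\[
D:\ \underbrace{d+1,\ldots,d+1}_{r\text{ terms}},\ \underbrace{d,\ldots,d}_{n-r\text{ terms}}.
\]
Any multigraph realizing $D$ will automatically have order $n$, size $m$, and be nearly regular, so everything reduces to producing such a realization in which every pair of vertices attains the ideal edge-connectivity.

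First I would verify that $D$ is multigraphical via Theorem~\ref{degree_seq_multigraph}: the sum is $2m$, which is even, and one needs $d_1 \le 2m - d_1$, i.e., $d_1 \le m$. Because $m \ge n$, the quotient satisfies $d \ge 2$ (if $d \le 1$, then $dn + r \le n + (n-1) < 2n \le 2m$, a contradiction); hence $d_1 \le d+1 \le \tfrac{2m}{n}+1$, and the inequality $\tfrac{2m}{n}+1 \le m$ is an easy consequence of $n \ge 3$ and $m \ge 3$. So Hakimi's condition holds.

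Next I would check the hypothesis of Theorem~\ref{edge-optimal_degree_seq}. Since every term of $D$ equals either $d$ or $d+1$ and $d \ge 2$, no term equals $1$, so $n_1 = 0$. On the other hand $d_1 - d_2 \ge 0$, so the condition $n_1 \le d_1 - d_2$ is trivially satisfied (we never need the exceptional case). Theorem~\ref{edge-optimal_degree_seq} then produces an ideally edge-connected multigraph with degree sequence $D$, which is the desired nearly regular ideally edge-connected (multi)graph of order $n$ and size $m$.

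There is no real obstacle here; the proof is a direct assembly of the two cited theorems once one chooses the right degree sequence. The only thing worth being careful about is the corner case $m = n$, which forces $d = 2$ and $r = 0$; the sequence is $(2,2,\dots,2)$, and a $2$-regular ideally edge-connected multigraph on $n$ vertices exists (any cycle $C_n$ works), so the argument goes through uniformly for all $m \ge n \ge 3$.
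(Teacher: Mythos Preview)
Your argument is correct and is exactly the intended one: the paper states the corollary with no proof beyond ``In particular, we have the following,'' meaning it is to be read off from Theorems~\ref{degree_seq_multigraph} and~\ref{edge-optimal_degree_seq} precisely as you do. The only remark is that your verification of Hakimi's inequality can be done more cleanly by noting that $\sum_{i\ge 2} d_i \ge (n-1)d \ge 2d \ge d+1 \ge d_1$, but your version also works.
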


\begin{theorem}\label{LambdaBarBound}
Let $G$ be a minimally $2$-edge-connected graph of order $n$.  Then
\[
\overline{\lambda}(G)\leq 2+\tfrac{(n-2)^2}{4n(n-1)}<\tfrac{9}{4}.
\]
Moreover, let $n=4k+\ell$, where $k,\ell\in\mathbb{Z}$ and $0\leq \ell<k$.
\begin{enumerate}
\item If $k\geq 8$ and $\ell=0$, then
\[
\overline{\lambda}(G)\leq 2+\tfrac{n^2-4n}{4n(n-1)},
\]
with equality if and only if $G$ is obtained from an ideally edge-connected $6$-regular (multi)graph of order $k$ by subdividing every edge.
\item If $k\geq 30$ and $\ell=1$, then
\[
\overline{\lambda}(G)\leq 2+\tfrac{n^2-6n+13}{4n(n-1)},
\]
with equality if and only if $G$ is obtained from an ideally edge-connected nearly regular (multi)graph of order $k$ and size $n-k$ by subdividing every edge.
\item If $k\geq 68$ and $\ell=2$, then
\[
\overline{\lambda}(G)\leq 2+\tfrac{n^2-8n+60}{4n(n-1)},
\]
with equality if and only if $G$ is obtained from an ideally edge-connected nearly regular (multi)graph of either order $k$ and size $n-k$, or order $k+1$ and size $n-k-1$, by subdividing every edge.
\item If $k\geq 30$ and $\ell=3$, then
\[
\overline{\lambda}(G)\leq 2+\tfrac{n^2-6n+17}{4n(n-1)},
\]
with equality if and only if $G$ is obtained from an ideally edge-connected nearly regular (multi)graph of order $k+1$ and size $n-k-1$ by subdividing every edge.
\end{enumerate}
\end{theorem}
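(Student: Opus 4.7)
The plan is to mirror the proof of Theorem~\ref{KappaBarBound}, with only minor substitutions for the edge-connectivity setting. Small cases ($n\leq 4$) can be verified directly, so assume $n\geq 5$ and let $G$ be an edge-optimal minimally $2$-edge-connected graph of order $n$. By Corollary~\ref{edge-optimal_are-bipartite} and Theorem~\ref{th:G2connected}, $G$ is bipartite and $2$-connected, with the vertices of degree $2$ and the vertices of degree at least $3$ as the partite sets. Form the (multi)graph $H$ by suppressing each degree-$2$ vertex of $G$ (replacing it with an edge between its two neighbours); if $s$ denotes the number of vertices of degree at least $3$, then $H$ has $s$ vertices and $n-s$ edges, and $G$ is obtained from $H$ by subdividing every edge.

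I would next observe that for $u,v\in V(H)\subseteq V(G)$, edge-disjoint $u$--$v$ paths in $G$ correspond bijectively to edge-disjoint $u$--$v$ paths in $H$ via the subdivision correspondence, so $\lambda_G(u,v)=\lambda_H(u,v)$, while every pair in $G$ involving a degree-$2$ vertex has edge-connectivity exactly $2$ by the degree bound. Consequently
\[
\Lambda(G)=2\bigl[\tbinom{n}{2}-\tbinom{s}{2}\bigr]+\Lambda(H)\leq 2\bigl[\tbinom{n}{2}-\tbinom{s}{2}\bigr]+P(H),
\]
using $\lambda(u,v)\leq\min\{\deg(u),\deg(v)\}$, with equality iff $H$ is ideally edge-connected. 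Writing $2(n-s)=ds+r$ with $0\leq r<s$ and applying Lemma~\ref{Balanced} to the degree sequence of $H$ (which sums to $2(n-s)$) yields $P(H)\leq d\tbinom{s}{2}+\tbinom{r}{2}$, with equality iff $H$ is nearly regular. Performing the same algebraic simplification as in Theorem~\ref{KappaBarBound} arrives at the identical expression
\[
\Lambda(G)\leq n(n-1)+(n-2s)(s-1)-r(s-r)/2,
\]
so the optimization over $s$ and the case analysis over $n\bmod 4$ are word-for-word those of the vertex case, producing the general bound $\lamb(G)<9/4$ and the exact bounds (a)--(d).

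Sharpness is established by invoking Corollary~\ref{edge-optimal} in place of Theorem~\ref{exists_ideally_connected} to produce an ideally edge-connected nearly regular (multi)graph $H$ of the required order and size; subdividing every edge of $H$ yields a simple graph $G$ of order $n$ that is $2$-edge-connected (since subdivision preserves $2$-edge-connectivity) and minimally so (every edge of $G$ is incident with a degree-$2$ subdivision vertex, and the other edge at that vertex becomes a bridge upon removal). Unlike the vertex case, Observation~\ref{MostlySimple} is not invoked because multigraph $H$'s are explicitly permitted throughout the statement. The main (quite mild) obstacle is not in the argument itself, which is structurally identical to the vertex proof, but in carefully justifying the reduction $\lambda_G(u,v)=\lambda_H(u,v)$ for $u,v\in V(H)$ using the degree-$2$ correspondence between paths in $G$ and $H$, and in verifying that $n-s\leq\tbinom{s}{2}$ at each optimal $s$ so that Corollary~\ref{edge-optimal} applies; both hold verbatim as in the vertex setting for the same ranges of $k$.
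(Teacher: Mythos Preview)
Your approach is essentially the same as the paper's: reduce to the bipartite structure via Corollary~\ref{edge-optimal_are-bipartite}, suppress the degree-$2$ vertices to obtain $H$, bound $\Lambda(G)$ by $2\bigl[\binom{n}{2}-\binom{s}{2}\bigr]+P(H)$, apply Lemma~\ref{Balanced}, and then copy the optimization in $s$ from Theorem~\ref{KappaBarBound}, with Corollary~\ref{edge-optimal} replacing Theorem~\ref{exists_ideally_connected} for sharpness. One small slip: the hypothesis of Corollary~\ref{edge-optimal} is $m\geq n\geq 3$ (in your notation $n-s\geq s\geq 3$, which holds automatically since $s\leq \tfrac{2}{5}n$), not $n-s\leq\binom{s}{2}$; the latter is the simple-graph condition from Theorem~\ref{exists_ideally_connected} and is irrelevant here because multigraphs $H$ are permitted.
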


\begin{proof}
Let $G$ be an edge-optimal minimally $2$-edge-connected graph of order $n\geq 5$.  By Corollary~\ref{Simple} and Corollary~\ref{edge-optimal_are-bipartite}, $G$ is a simple bipartite graph, with the set of vertices of degree $2$ and the set of vertices of degree exceeding $2$ being independent sets.  The remainder of the proof is analogous to that of Theorem~\ref{KappaBarBound}, with the terminology and notation for connectivity changed to that of edge-connectivity, and Corollary~\ref{edge-optimal} used in place of Theorem~\ref{exists_ideally_connected} to guarantee sharpness.
\end{proof}

\begin{figure}
[htb!]
\begin{minipage}{\textwidth}
\centering{
\raisebox{-0.5\height}{\begin{tikzpicture}
\pgfmathsetmacro{\n}{10}
\pgfmathtruncatemacro{\m}{\n-1}
\pgfmathtruncatemacro{\p}{\m-1}
\foreach \x in {0,...,\m}
{
\vertex (\x) at (-72+360*\x/\n:2cm) {};
}
\foreach \x in {0,...,\p}
{
\pgfmathtruncatemacro{\y}{Mod(\x+1,\n)}
\path
(\x) edge (\y)
(\x) edge[bend right=20] (\y)
(\x) edge[bend left=20] (\y);
}
\node (dots) at ($(\m)!0.5!(0)$) {$\dots$};
\end{tikzpicture}}
\hspace{2cm}
\raisebox{-0.5\height}{\begin{tikzpicture}
\pgfmathsetmacro{\n}{10}
\pgfmathtruncatemacro{\m}{\n-1}
\pgfmathtruncatemacro{\p}{\m-1}
\foreach \x in {0,...,\m}
{
\vertex (\x) at (-72+360*\x/\n:2cm) {};
}
\foreach \x in {0,...,\p}
{
\pgfmathtruncatemacro{\a}{\n+\x}
\pgfmathtruncatemacro{\b}{2*\n+\x}
\pgfmathtruncatemacro{\c}{3*\n+\x}
\hollowvertex (\a) at (-72+180/\n+360*\x/\n:2.3cm) {};
\hollowvertex (\b) at (-72+180/\n+360*\x/\n:2cm) {};
\hollowvertex (\c) at (-72+180/\n+360*\x/\n:1.7cm) {};
}
\foreach \x in {0,...,\p}
{
\pgfmathtruncatemacro{\y}{Mod(\x+1,\n)}
\pgfmathtruncatemacro{\a}{\x+\n}
\pgfmathtruncatemacro{\b}{\x+2*\n}
\pgfmathtruncatemacro{\c}{\x+3*\n}
\path
(\x) edge (\a)
(\a) edge (\y)
(\x) edge (\b)
(\b) edge (\y)
(\x) edge (\c)
(\c) edge (\y);
}
\node (dots) at ($(\m)!0.5!(0)$) {$\dots$};
\end{tikzpicture}}
}
\end{minipage}
\caption{The graph $C_{k}^{[3]}$ (left) and the graph $G_{4k}$ (right) obtained by subdividing every edge of $C_{k}^{[3]}$.  The vertices resulting from subdivision are indicated by hollow circles.}\label{EdgeExample}
\end{figure}
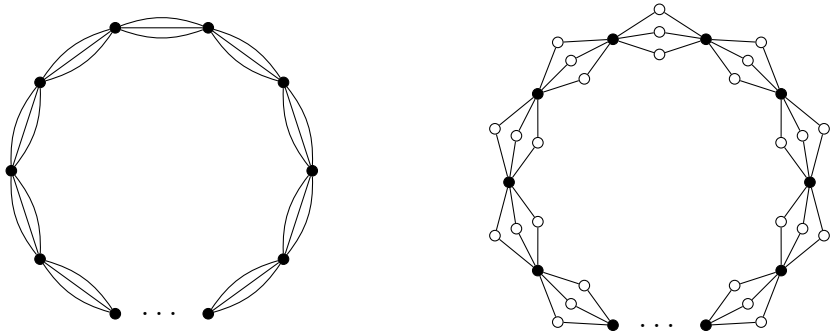

The examples of optimal minimally $2$-connected graphs described at the end of Section~\ref{VertexBoundSection} are now easily seen to be edge-optimal minimally $2$-edge-connected graphs as well.  We can also provide examples of edge-optimal minimally $2$-edge-connected graphs which are not optimal minimally $2$-connected graphs.  For example, for $n=4k$ with $k\geq 8$, let $C_k^{[3]}$ be the graph obtained from $C_k$ by replacing every edge with a bundle of three multiple edges, and let $G_{4k}$ be the graph obtained from $C_k^{[3]}$ by subdividing every edge exactly once (see Figure~\ref{EdgeExample}).  Since $C_k^{[3]}$ is an ideally edge-connected $6$ regular graph on $k$ vertices, we conclude, by Theorem~\ref{LambdaBarBound}, that $G_{4k}$ is an edge-optimal minimally $2$-edge-connected graph.  While $G_{4k}$ is also a minimally $2$-connected graph, note that $C_k^{[3]}$ is clearly not ideally (vertex-)connected, so $G_{4k}$ is not an optimal minimally $2$-connected graph.

\section{Conclusion}
In this paper we obtained sharp bounds for the average connectivity of minimally $2$-connected graphs and the average edge-connectivity of minimally $2$-edge-connected graphs, and we characterized the extremal structures. It remains an open problem to determine an upper bound for the average connectivity of minimally $k$-connected graphs and the average edge-connectivity of minimally $k$-edge-connected graphs for $k \ge 3$.  What can be said about the structure of \emph{optimal} minimally $k$-connected graphs (those with largest average connectivity among all minimally $k$-connected graphs of the same order)?

\begin{conjecture}\label{conjecture}
Let $k\geq 3$, and let $G$ be an optimal minimally $k$-connected graph of order $n$.  Then for $n$ sufficiently large, $G$ is bipartite, with partite sets the set of vertices of degree $k$ and the set of vertices of degree exceeding $k$.
\end{conjecture}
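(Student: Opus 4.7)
The plan is to follow the two-step strategy used to prove Theorem~\ref{bipartite}: first establish that no two vertices of degree exactly $k$ are adjacent in an optimal graph, then establish that no two vertices of degree exceeding $k$ are adjacent. The degree-$>k$ step appears to generalize relatively cleanly to $k\geq 3$, whereas the degree-$k$ step is where the substantive difficulty lies.

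For the degree-$>k$ step, the key ingredient is the generalization of the adjacent special case of Theorem~\ref{con_between_pairs_large_degree} due to Bollob\'as (\cite[Lemma~4.2]{Bollobas1978}): in a minimally $k$-connected graph, adjacent vertices $u,v$ of degree exceeding $k$ satisfy $\kappa(u,v)=k$. Given such a pair in an optimal graph $G$, I would contract the edge $uv$ to a single new vertex $w$ and then attach a new degree-$k$ vertex $y$ whose $k$ neighbours are chosen so that $G'$ remains minimally $k$-connected and has the same order as $G$. Concretely, $y$ should be adjacent to $w$ together with the $k-1$ vertices of a minimum vertex cut of the contracted graph that separates $w$ from a suitably chosen target, so that every edge incident with $y$ is $k$-essential. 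The comparison $\kap(G')>\kap(G)$ then decomposes into a case analysis over pairs of vertices analogous to items (i)--(iv) in the proof of Theorem~\ref{bipartite}, with the strict gain coming from the fact that $\kappa_G(u,x)+\kappa_G(v,x)<\kappa_{G'}(w,x)+\kappa_{G'}(y,x)$ for some carefully chosen $x$ lying on a suitable cut.

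For the degree-$k$ step, the situation is harder because a degree-$k$ vertex with $k\geq 3$ has three or more neighbours and so can participate in far richer local configurations than the ``threads'' available when $k=2$. I would search within the subgraph induced by the degree-$k$ vertices for a local structure that admits a connectivity-improving replacement: for instance, an induced edge $uv$ of degree-$k$ vertices together with neighbourhood data permitting an edge-swap that reroutes $u$ so as to be adjacent only to vertices of degree exceeding $k$. Mader's theorem (Theorem~\ref{mader2}) and Corollary~\ref{mader} would be invoked to guarantee that any candidate replacement preserves minimal $k$-connectedness, and the potential function $P(G)$ introduced in Section~\ref{VertexBoundSection} might be used to detect the resulting strict increase in total connectivity. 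The main obstacle---and the reason the statement is phrased as a conjecture---is precisely that no obviously-correct local operation presents itself: the $k=2$ proof crucially exploits that degree-$2$ vertices are $2$-valent and therefore lie on threads that admit an immediate parallelization, whereas for $k\geq 3$ the relevant local structures are genuinely higher-dimensional. A successful proof will likely require either a substantially more sophisticated local replacement operation, or a global argument based on a careful analysis of the degree sequence together with the potential bound $K(G)\le P(G)$, sidestepping any explicit modification of $G$.
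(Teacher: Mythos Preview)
The statement you are addressing is Conjecture~\ref{conjecture}, which the paper explicitly leaves open: there is no proof in the paper to compare your attempt against. The authors offer only computational evidence for small $k$ and $n$, and remark that a proof would yield $\overline{\kappa}(G)<\tfrac{9}{8}k$ for minimally $k$-connected graphs.

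Your proposal is not a proof but a research outline, and you yourself acknowledge the decisive gap in the degree-$k$ step. Even in the degree-$>k$ step, however, your sketch underestimates the difficulty. In the $k=2$ case the paper devotes Facts~1--3 to verifying that contracting $uv$ and attaching a single degree-$2$ vertex $y$ to the cut vertex $x$ and to $w$ yields a minimally $2$-connected graph; this relies heavily on the fact that $G-uv$ has a \emph{single} cut vertex $x$ and that $y$ can be attached with just two edges. For $k\geq 3$, after contracting $uv$ you must attach $y$ with $k$ edges, and your prescription ``$w$ together with the $k-1$ vertices of a minimum vertex cut'' neither guarantees that the resulting graph is $k$-connected (the cut you choose may not separate $w$ from anything once $y$ is added) nor that every edge of the new graph is $k$-essential. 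Indeed, the edges from $y$ to cut vertices could easily create chorded structures or raise local connectivities in ways that destroy minimality elsewhere. Verifying minimal $k$-connectedness of the modified graph is a genuine obstacle already for $k=3$, not a routine extension of the $k=2$ argument.

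In short: the paper proves nothing here, and your proposal correctly identifies the two-step template from Theorem~\ref{bipartite} as the natural line of attack, but both steps contain substantive unresolved difficulties, not just the one you flag.
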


We also conjecture the analogous statement for the edge version.  

\begin{conjecture}\label{conjecture_edge_version}
Let $k\geq 3$, and let $G$ be an edge-optimal minimally $k$-edge-connected graph of order $n$.  Then for $n$ sufficiently large, $G$ is bipartite, with partite sets the set of vertices of degree $k$ and the set of vertices of degree exceeding $k$.
\end{conjecture}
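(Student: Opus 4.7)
The plan is to follow the two-pronged strategy used in Section~\ref{Structure}. Namely, for $k\geq 3$ and $n$ sufficiently large, I would prove separately (i) that no two vertices of degree exactly $k$ are adjacent in any edge-optimal minimally $k$-edge-connected graph of order $n$ (the analog of Theorem~\ref{th:verticesdegree2}), and (ii) that no two vertices of degree exceeding $k$ are adjacent (the analog of Theorem~\ref{th:verticesdegreegreater2}). Together, these give exactly the bipartition asserted in the conjecture.

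For (i), I would mimic the proof of Theorem~\ref{th:verticesdegree2}. The first step is a $k$-analog of Lemma~\ref{ExtensiblePairExists}: if $n$ is large enough, then some minimally $k$-edge-connected graph on $n$ vertices has $\overline{\lambda}>k$, so in an edge-optimal $G$ there exist vertices $x,y$ lying in a common block with $\lambda_G(x,y)\geq k+1$. The existence of such a witness graph must be exhibited explicitly (for instance, by attaching a degree-$k$ vertex to an ideally edge-connected nearly regular base graph). Next, I need a $k$-analog of Lemma~\ref{Extensible}: given $x$ and $y$ with $\lambda(x,y)\geq k+1$ in a minimally $k$-edge-connected $G$, one can attach a new degree-$k$ vertex $z$ by $k$ new edges distributed among $x$, $y$ and other appropriately chosen vertices, so that the result remains minimally $k$-edge-connected and gains an extra edge-disjoint $x$--$y$ path. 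Granted such an extension, if $u,v$ are adjacent degree-$k$ vertices in $G$, I would extend and then contract the edge $uv$ to a single vertex $w$, preserving $n$ and yielding a graph with strictly larger total edge-connectivity, contradicting optimality.

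For (ii), suppose $u,v$ are adjacent with $\deg(u),\deg(v)>k$. Then $G-uv$ has edge-connectivity $k-1$, so there is a $(k-1)$-edge-cut $F$ in $G-uv$ separating $u$ from $v$, producing two sides $G_1\ni u$ and $G_2\ni v$. Imitating Theorem~\ref{th:verticesdegreegreater2}, one would like to contract $uv$ to $w$ (losing one vertex) and simultaneously add one new vertex $z$ by modifying the cut $F$, then establish the analogous four inequalities (i)--(iv) bounding $\lambda$-sums from below in the new graph. The potential gains $\lambda_G(u,x)+\lambda_G(v,x)<\lambda_{G'}(w,x)+\lambda_{G'}(z,x)$ should come, as before, from replacing uses of the edge $uv$ by a new $w$--$z$ path built out of $F$ and the newly added vertex, while Theorem~\ref{th:verticesdegree2} (once proved) forces every edge of $G$ to be incident with a degree-$k$ vertex and should help verify that the modified graph remains minimally $k$-edge-connected.

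The main obstacle will be carrying out the replacement operation in step (ii). In the $k=2$ case, $F$ consists of a single bridge $xy$, and subdividing it by a new degree-$2$ vertex $z$ is both legal and vertex-count-neutral. For $k\geq 3$ this exact trick fails, because subdivision produces a degree-$2$ vertex which cannot occur in a minimally $k$-edge-connected graph. The new vertex $z$ must instead have degree at least $k$, so one needs a more delicate surgery on $F$: for example, removing all $k-1$ edges of $F$ and replacing them with a new degree-$k$ vertex joined into both sides of the cut, while compensating the resulting degree imbalance by rerouting one additional edge. Proving that such a surgery always exists, preserves minimal $k$-edge-connectivity, and strictly increases the total edge-connectivity seems to require substantially more machinery than the $k=2$ proof, and likely a genuine generalization of the Chaty--Chein extension theory (Lemma~\ref{Extensible}) to higher edge-connectivity. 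Once (i) and (ii) are in hand, the bipartite conclusion follows verbatim as in Corollary~\ref{edge-optimal_are-bipartite}.
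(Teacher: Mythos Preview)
The statement you are addressing is a \emph{conjecture}, posed explicitly as an open problem in the paper's conclusion; the paper provides no proof whatsoever. Your proposal therefore cannot be compared against a proof in the paper, because none exists.

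As an outline of a possible attack on the open problem, your plan is reasonable in spirit but, as you yourself acknowledge, contains genuine gaps rather than mere details to fill in. The two critical missing ingredients are exactly the ones you flag. First, Lemma~\ref{Extensible} (the Chaty--Chein extension result) is specific to $k=2$; the $k$-analog you need---attaching a new degree-$k$ vertex so that the result remains minimally $k$-edge-connected and gains an edge-disjoint $x$--$y$ path---is not a known theorem, and you offer no argument for it. Second, and more seriously, the proof of Theorem~\ref{th:verticesdegreegreater2} hinges on the fact that for $k=2$ the cut $F$ in $G-uv$ is a single bridge, so subdividing it recovers the vertex lost to contraction while producing a legal degree-$2$ vertex. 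For $k\geq 3$ the cut $F$ has $k-1\geq 2$ edges, subdivision is forbidden, and the ``more delicate surgery'' you describe (deleting $F$ and inserting a degree-$k$ vertex with rerouted edges) is only a wish, not a construction: you give no specification of which edges to add, no proof that minimal $k$-edge-connectivity is preserved, and no verification of the four $\lambda$-inequalities. These are precisely the obstacles that leave the conjecture open; your proposal restates them rather than resolves them.
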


These conjectures are supported by computational evidence for $k=3$ and $k=4$ and $n\leq 11$.  If Conjecture~\ref{conjecture} is true, then for every $k\geq 3$, the proof of the general upper bound of Theorem~\ref{KappaBarBound} generalizes easily to show that $\overline{\kappa}(G)<\tfrac{9}{8}k$ for any minimally $k$-connected graph $G$  of sufficiently large order, depending on $k$.  The edge version is analogous.



\providecommand{\bysame}{\leavevmode\hbox to3em{\hrulefill}\thinspace}
\providecommand{\MR}{\relax\ifhmode\unskip\space\fi MR }
\providecommand{\MRhref}[2]{%
  \href{http://www.ams.org/mathscinet-getitem?mr=#1}{#2}
}
\providecommand{\href}[2]{#2}

\end{document}